\date{}
\newtheorem{theorem}{Theorem}[section]
\newtheorem*{theorem*}{Theorem}
\newtheorem*{theoremA}{Theorem~A}
\newtheorem*{theoremB}{Theorem~B}
\newtheorem{lemma}[theorem]{Lemma}
\newtheorem{cor}[theorem]{Corollary}
\newtheorem{prop}[theorem]{Proposition}
\theoremstyle{definition}
\newtheorem{Remark}[theorem]{Remark}
\theoremstyle{plain}
\newcommand{\N}{\mathbb{N}}
\newcommand{\Z}{\mathbb{Z}}
\newcommand{\R}{\mathbb{R}}
\newcommand{\Exp}{\mathbb{E}}
\newcommand{\Event}{\mathcal{E}}
\def\Prob{{\mathbb P}}
\def\cf{{\mathcal L}}
\newcommand{\supp}{{\rm supp\,}}
\def\Col{{\rm col}}
\def\Row{{\rm row}}
\def\comp{{\rm Comp}}
\def\incomp{{\rm Incomp}}
\def\Net{{\mathcal N}}
\def\spn{{\rm span\,}}
\def\dist{{\rm dist\,}}
\def\thres{{\mathcal T}}
\def\spv{{\bf Y}}
\def\LCD{{\rm LCD}}
\title{Singularity of random Bernoulli matrices}
\author{Konstantin Tikhomirov}
\address{School of Mathematics,
Georgia Institute of Technology}
\email{ktikhomirov6@gatech.edu}
\begin{document}

\begin{abstract}
For each $n$, let $M_n$ be an $n\times n$ random matrix with independent $\pm 1$ entries.
We show that $\Prob\{\mbox{$M_n$ is singular}\}=(1/2+o_n(1))^n$, which settles an old problem.
Some generalizations are considered. 
\end{abstract}

\maketitle

\section{Introduction}

Let $X_1,X_2,\dots,X_n$ be independent vectors, each $X_i$ uniformly distributed on vertices of the discrete cube $\{-1,1\}^n$.
What is the probability that $X_1,\dots,X_n$ are linearly independent?

The question has attracted considerable attention in literature.
It can be equivalently restated as a question about singularity of an $n\times n$ matrix $M_n$ with independent
$\pm 1$ entries.
J.\ Koml\'os \cite{Komlos} showed that $\Prob\{\mbox{$M_n$ is singular}\}=o_n(1)$.
Much later, the bound $\Prob\{\mbox{$M_n$ is singular}\}\leq 0.999^n$ was obtained
by J. Kahn, J. Koml\'{o}s\ and\ E. Szemer\'{e}di
in \cite{KKS}.
The upper bound was sequentially improved to $0.939^n$ in \cite{TV disc1} and $(3/4+o_n(1))^n$
in \cite{TV disc2} by T. Tao and V.Vu, and to $(1/\sqrt{2}+o_n(1))^n$ by J. Bourgain, V. Vu and P. Wood in \cite{BVW}.

It has been conjectured that
\begin{equation}\label{eq: conj}
\Prob\{\mbox{$M_n$ is singular}\}=\bigg(\frac12+o_n(1)\bigg)^n
\end{equation}
(see, for example, \cite[Conjecture~1.1]{BVW}, \cite[Conjecture~7.1]{Vu survey},
\cite[Conjecture~2.1]{Vu ICM 2014} as well as some stronger conjectures in \cite{Arratia}).
In this paper, we confirm the conjecture and, moreover,
provide quantitative small ball probability estimates for the smallest singular value of $M_n$.
We extend our analysis to random matrices with Bernoulli($p$)
independent entries.
Let $1_n$ denote the $n$--dimensional vector of all ones.
The main result of this paper can be formulated as follows.
\begin{theoremA}
For every $p\in(0,1/2]$ and $\varepsilon>0$ there are $n_{\text{\tiny{p,$\varepsilon$}}},C_{\text{\tiny{p,$\varepsilon$}}}>0$
depending only on $p$ and $\varepsilon$ with the following property.
Let $n\geq n_{\text{\tiny{p,$\varepsilon$}}}$, and let $B_n(p)$ be $n\times n$ random matrix with independent
entries $b_{ij}$, such that $\Prob\{b_{ij}=1\}=p$ and $\Prob\{b_{ij}=0\}=1-p$.
Then for any $s\in[-1,0]$
$$\Prob\big\{s_{\min}(B_n(p)+s\,1_n1_n^\top)\leq t/\sqrt{n}\big\}
\leq\big(1-p+\varepsilon\big)^n + C_{\text{\tiny{p,$\varepsilon$}}}\,t,\quad t>0.$$
\end{theoremA}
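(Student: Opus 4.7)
The plan is to follow the Rudelson--Vershynin framework for random matrix invertibility, combined with a refinement aimed at capturing the sharp singularity exponent $(1-p)^n$. Set $M:=B_n(p)+s\,1_n1_n^\top$ and observe that the rank-one shift $s\,1_n1_n^\top$ only changes the mean of the entries and will be transparent to the main steps. The goal is to bound $\Prob\{\inf_{v\in S^{n-1}}\|Mv\|\leq t/\sqrt n\}$. As usual, I would decompose the unit sphere into compressible vectors $\comp(\delta,\rho)$ (within Euclidean distance $\rho$ of a vector with support of size $\leq\delta n$) and incompressible vectors $\incomp$, for small constants $\delta,\rho$ depending only on $p$ and $\varepsilon$, and treat the two regimes separately.

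For the incompressible regime I would apply the invertibility-via-distance reduction
\[
\Prob\big\{\inf_{v\in\incomp}\|Mv\|\leq t/\sqrt n\big\}\leq\frac{1}{n}\sum_i\Prob\{\dist(X_i,H_i)\leq Ct\},
\]
where $X_i$ is the $i$-th column of $M$ and $H_i=\spn\{X_j:j\ne i\}$. This distance equals $|\langle X_i,w_i\rangle|$ for a unit normal $w_i$ to $H_i$. With overwhelming probability $w_i\in\incomp$ and has exponentially large essential $\LCD$; a Littlewood--Offord type anti-concentration estimate then yields $\Prob\{|\langle X_i,w_i\rangle|\leq t\}\leq C_{p,\varepsilon}t$ up to an exponentially small error, which produces the linear-in-$t$ term of the bound. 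Some care is needed because the Bernoulli$(p)$ distribution is asymmetric, but this can be handled by symmetrizing against an independent copy.

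The genuine obstacle lies in the compressible case, where the target is a bound of the sharp form $(1-p+\varepsilon)^n$. A single $\varepsilon$-net on $\comp(\delta,\rho)$ has cardinality at least $\binom{n}{\delta n}(C/\rho)^{\delta n}$, and multiplying by the per-vector estimate $\lesssim(1-p)^{(1-\delta)n}$ produces an exponent strictly worse than $1-p$. To close this gap I would use a cascade of finer nets indexed by the effective support size $k\leq\delta n$: for each $k$, condition on the $n\times k$ submatrix of columns indexed by $\supp(v)$ and analyze the remaining randomness; the hope is that the per-level entropy $\binom{n}{k}(C/\rho)^k$ can be absorbed by sharper per-vector bounds of the form $(1-p)^n(C_p)^k$, so that summation over $k$ yields the desired exponent. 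Making this step rigorous---especially at the boundary $k\approx\delta n$ between the compressible and incompressible regimes---is where I expect the main new ingredient, probably in the form of a conditioning/resampling argument that isolates the appearance of a zero column (or a row of all-$s$ in the shifted matrix) as the dominant failure mode contributing the sharp factor $(1-p)^n$.
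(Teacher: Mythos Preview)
You have inverted where the real difficulty lies. In the paper the compressible case is \emph{not} the obstacle: Proposition~\ref{l: compress} dispatches it with a completely standard net argument. The point is that for any \emph{fixed} unit vector $x$, each inner product $\langle\Row_k(M),x\rangle$ already satisfies $\cf(\langle\Row_k(M),x\rangle,r)\leq 1-p$ for some small $r$ (Lemma~\ref{l: aux single vector}), so by tensorization the per-vector probability is essentially $(1-p)^{n}$; the small net over $\lfloor\delta n\rfloor$-sparse vectors has size only $\binom{n}{\delta n}(C/\rho)^{\delta n}$, and choosing $\delta,\rho$ small enough (depending on $\varepsilon$) absorbs this into $(1-p+\varepsilon)^n$. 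No cascade is needed.

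The genuine obstacle is exactly the step you dismissed as routine: in the incompressible regime you write ``$\Prob\{|\langle X_i,w_i\rangle|\leq t\}\leq C_{p,\varepsilon}t$ up to an exponentially small error.'' That error is the probability that the random normal $w_i$ has poor anticoncentration (small $\LCD$, or large threshold in the paper's language), and in the standard Rudelson--Vershynin argument it comes out as $c^n$ for some $c<1$ that depends only on the subgaussian moment and is \emph{not} $1-p$. Since this error appears additively alongside the compressible bound, it must itself be pushed down to $(1-p+\varepsilon)^n$, and this is the entire content of the paper. The mechanism is not $\LCD$ at all: the paper introduces the threshold $\thres_p(x,L)$ and proves, via an ``inversion of randomness'' argument (Theorem~\ref{th: averaging} and Corollary~\ref{cor: anticoncentration}), that among lattice vectors in a suitable admissible box, the fraction with large threshold is $e^{-Mn}$ for \emph{arbitrarily large} $M$, while the anticoncentration bound $L_{\text{\tiny\ref{th: averaging}}}$ stays fixed. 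This superexponential saving is what allows the covering number of the net on ``structured'' normals to beat the per-vector probability and yield the sharp exponent. Your proposal contains no substitute for this step, and the standard $\LCD$ net argument you allude to would give an exponent strictly worse than $1-p$.
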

It is easy to see that the probability that the first column of $B_n(p)$ is equal to zero, is $(1-p)^n$.
Thus, the theorem implies that, for a fixed $p\in(0,1/2]$,
$$
\Prob\big\{\mbox{$B_n(p)$ is singular}\big\}=\big(1-p+o_n(1)\big)^n,
$$
and further, when applied with $p=1/2$ and $s=-1/2$, gives \eqref{eq: conj}.

\section{Proof strategy}\label{s: strategy}

The proof of upper bounds on the probability of singularity of random discrete matrices (i.e.\ matrices with entries
taking a finite number of values)
developed in work \cite{KKS} and later in \cite{TV disc1, TV disc2, BVW},
uses, as a starting point, the relation
\begin{align*}
\Prob\big\{\mbox{the matrix with columns $X_1,\dots,X_n$ is singular}\big\}
&=e^{o_n(n)}\,\Prob\big\{\mbox{the matrix has rank $n-1$}\big\}\\
&=e^{o_n(n)}\sum\limits_{V}\Prob(A_V),
\end{align*}
which holds under rather broad assumptions on the distributions of the discrete random vectors $X_1,\dots,X_n$
\cite{BVW}.
Here, the summation is taken over (finitely many) hyperplanes $V$ 
such that
the probability of $A_V$ --- the event that $X_1,\dots,X_n$ span $V$ --- is non-zero.
The set of the hyperplanes $V$ is then partitioned according to the value of the {\it combinatorial dimension}
which is defined as the number $d(V)\in \frac{1}{n}\Z$ such that $\max\limits_i\Prob\{X_i\in V\}\in\big(C^{-d(V)-1/n},C^{-d(V)}\big]$,
where $C$ is some constant depending on the distribution of $X_i$'s.
The sum of probabilities corresponding to a given combinatorial dimension is estimated
in terms of probabilities $\Prob\{Y_i\in V\}$ for specially constructed random vectors $Y_i$.
For some discrete distributions, in particular, for matrices with i.i.d.\ entries with the probability mass function
$$f(m)=\begin{cases}\frac{1}{4},&\mbox{ if $m=\pm 1$};\\ \frac{1}{2},&\mbox{ if $m=0$,}\end{cases}$$
upper bounds for the singularity obtained using the strategy are asymptotically sharp as was shown in \cite{BVW}.

Methods 
providing strong quantitative information on the smallest
singular value of a random matrix were proposed in papers \cite{Rudelson ann math,TV ann math}.
As a further development, the work \cite{RV adv}
established small ball probability estimates on $s_{\min}$ of any $n\times n$ matrix $A_n$ with i.i.d normalized subgaussian entries
of the form $\Prob\{s_{\min}(A_n)\leq t/\sqrt{n}\}\leq c^{n}+Ct$, $t>0$, where $C>0$ and $c\in(0,1)$
depend only on the subgaussian moment. Thus, \cite{RV adv} recovered the result of \cite{KKS},
possibly with a worse constant. The key notion of \cite{RV adv}
is {\it the essential least common denominator} (LCD) which measures ``unstructuredness'' of a fixed vector $(x_1,\dots,x_n)$
and is defined as the smallest $\lambda$ such that the distance from $\lambda x$
to the integer lattice $\Z^n$ does not exceed $\min(c'\lambda\|x\|_2, c\sqrt{n})$.
LCD can be used to characterize anticoncentration properties of random sums $\sum_{i}a_{ij}x_i$
(and in that respect the approach of \cite{RV adv} is related to the earlier paper \cite{TV ann math}
where the anticoncentration properties of discrete random sums were connected with existence of
{\it generalized arithmetic progressions} containing almost all of $\{x_1,\dots,x_n\}$).
It was proved in \cite{RV adv} that for any unit vector $x$,
$\Prob\big\{\big|\sum_{i}a_{ij}x_i\big|\leq t\big\}\leq C t+\frac{C}{\LCD(x)}+e^{-cn}$
for any $t>0$ (see also \cite{RV rect}).
This relation, combined with the assertion that the LCD of a random unit vector normal to the linear span of the first $n-1$ columns of $A_n$
is exponential in $n$, already implies that $A_n$ is singular with probability at most $e^{-cn}$.
Moreover, an efficient averaging procedure (which we recall below) used in \cite{RV adv}
allows to obtain strong quantitative bounds on $s_{\min}(A_n)$.
The LCD of the random unit normal is estimated 
with help of an elaborate $\varepsilon$--net argument.

\medskip

The approach that we use in this paper is partially based on the methods used in \cite{RV adv}
(and in \cite{LPRT}), while the principal difference lies in estimating anticoncentration properties of random sums.
The starting point is the relation (taken from \cite{RV adv})
\begin{equation*}
\begin{split}
\Prob\big\{s_{\min}(A_n)\leq t/\sqrt{n}\big\}
&\leq \Prob\big\{\|A_nx\|_2\leq t/\sqrt{n}\mbox{ for some }x\in \comp_n(\delta,\nu)\big\}\\
&\hspace{1cm}+\Prob\big\{\|A_nx\|_2\leq t/\sqrt{n}\mbox{ for some }x\in \incomp_n(\delta,\nu)\big\}\\
&\leq\Prob\big\{\|A_nx\|_2\leq t/\sqrt{n}\mbox{ for some }x\in \comp_n(\delta,\nu)\big\}\\
&\hspace{1cm}+\frac{1}{\delta}\Prob\big\{|\langle\Col_n(A_n),Y_n\rangle|\leq t/\nu\}\big\},
\end{split}
\end{equation*}
valid for any $n\times n$ random matrix $A_n$ with the distribution invariant under permutations of columns.
Here, $Y_n$ is a random unit vector orthogonal to the linear span of $\Col_1(A_n),\dots$, $\Col_{n-1}(A_n)$;
$\comp_n(\delta,\nu)$ is the set of {\it compressible}
unit vectors defined as those with the Euclidean distance
at most $\nu$ to the set of $\delta n$--sparse vectors;
$\incomp_n(\delta,\nu)=S^{n-1}\setminus \comp_n(\delta,\nu)$ is the set of {\it incompressible} vectors.
In the above formula, $\delta,\nu\in(0,1]$ can be arbitrary, although for our proof we take both parameters
small (depending on the choice of $\varepsilon$ in the statement of our main result).

The first summand in the rightmost expression --- the small ball probability for $\inf\limits_{x\in\comp_n}\|Ax\|_2$
--- can be bounded with help of an argument which is completely standard by now.
For Reader's convenience, we provide the estimate together with a complete proof in Preliminaries.

The second term --- $\Prob\big\{|\langle\Col_n(A_n),Y_n\rangle|\leq t/\nu\big\}$
--- crucially depends on the structure of the random normal $Y_n$.
In \cite{RV adv}, the authors provided an explicit characterization of ``unstructured'' vectors
in terms of the LCD. In contrast, in our approach we make no attempt to obtain a geometric
description of vectors with good anticoncentration properties.
For each unit vector $x$ and a parameter $L$, we introduce the {\it threshold} $\thres_p(x,L)$
which is defined as the supremum of all
$t\in(0,1]$ such that
$\cf\big(\sum_{i=1}^n b_i x_i,t\big)> Lt$,
where, $b_1,\dots,b_n$ are independent Bernoulli($p$) random variables.
Here, $\cf(\cdot,\cdot)$ denotes the L\'evy concentration function, defined as
$\cf(Z,t):=\sup_{\lambda\in\R}\Prob\{|Z-\lambda|\leq t\}$, $t\geq 0$, for any real valued random variable $Z$.
The threshold can be viewed as a lower bound of the range of $t$'s
for which corresponding random linear combination admits ``good'' anticoncentration estimates.
Thus, to show that $B_n(p)+s1_n 1_n^\top$ is singular with probability $(1-p+o_n(1))^n$,
it is sufficient to check that the threshold of the random normal $Y_n$ is at most $(1-p+o_n(1))^{n}$
with probability at least $1-(1-p+o_n(1))^n$.
Note that this approach can be related to the {\it inverse} Littlewood--Offord theory
started in \cite{TV ann math}, although here we are only interested in estimating from above the ``size'' of the
set of potential normal vectors with large thresholds, rather than giving an explicit description of this set
(in that respect, our strategy can be related to theorems in \cite[Section~3]{TV circular}, however, the actual proofs
are very different).

To estimate the threshold, we apply a procedure which can be called ``inversion of randomness'',
and which we briefly describe below.
We would like to make the description as non-technical as possible, and for this reason omit any
discussion of the choice of parameters and other issues of secondary importance.
Take any $T$ with $T^{-1}\ll (1-p+o_n(1))^{-n}$, and let $D_T$ be the set of all $(\delta,\nu)$--incompressible
unit vectors with the threshold falling into the interval $[T,2T)$.
In order to show that the probability of the event $\{Y_n\in D_T\}$ is close to zero,
we construct a discrete approximation $\Net_T$ of $D_T$,
which is a subset of elements of an $n$--dimensional lattice
having the threshold of order $T$, and coordinates in a certain range.
We then show that the event $\{Y_n\in D_T\}$ is contained in
$$\Event_{\Net_T}:=\big\{\mbox{There is a vector }x\in \Net_T\mbox{ ``almost orthogonal'' to $\Col_1,\dots,\Col_{n-1}$}\big\},$$
where ``almost orthogonal'' should be understood in a specific sense which we prefer not to discuss here.
This implies
$$
\Prob\{Y_n\in D_T\}\leq \Prob(\Event_{\Net_T})\leq |\Net_T|\,\max\limits_{x\in\Net_T}
\Prob\big\{\mbox{$x$ is ``almost orthogonal'' to $\Col_1,\dots,\Col_{n-1}$}\big\},
$$
and the proof is reduced to efficiently bounding from above the cardinality of the discretization $\Net_T$.
The ``inversion of randomness'' is used to solve the problem.
We consider a random vector $\xi$ uniformly distributed on a subset of the lattice
(whose cardinality is much easier to compute) containing $\Net_T$,
and show that with probability {\it superexponentially} close to one, the threshold of $\xi$
is much less than $T$, so that $\xi\notin \Net_T$. This allows to bound $|\Net_T|$
in terms of the cardinality of the range of $\xi$, times the factor $e^{-\omega(n)}$.
Thus, instead of studying anticoncentration of random sums with fixed coefficients
satisfying certain structural assumptions, we consider typical anticoncentration properties
of sums with {\it random} coefficients $\xi_i$.
It will be convenient to work with the expression
$$
\cf_b \Big(\sum\limits_{i=1}^n b_i\xi_i,t\Big)
:=\sup\limits_{\lambda\in\R}\sum\limits_{(v_j)_{j=1}^n\in\{0,1\}^n}
p^{\sum_j v_j} (1-p)^{n-\sum_j v_j}{\bf 1}_{[-t,t]}\big(\lambda+v_1\xi_1+\dots+v_n\xi_n\big),
$$
which is interpreted as the L\'evy concentration function with respect to
the randomness of the vector $b=(b_1,\dots,b_n)$ of independent Bernoulli($p$) components.

Let us state, as an illustration, a corollary of the main technical result of this paper, Theorem~\ref{th: averaging},
which deals with rescaled vectors distributed on the integer lattice $\Z^n$:
\begin{theoremB}
Let $\delta\in(0,1]$, $p\in(0,1/2]$, $\varepsilon\in(0,p)$, $M\geq 1$.
There exist $n_{\text{\tiny B}}=n_{\text{\tiny B}}(\delta,\varepsilon,p,M)\geq 1$
depending on $\delta,\varepsilon,p,M$ and $L_{\text{\tiny B}}=
L_{\text{\tiny B}}(\delta,\varepsilon,p)>0$ depending {\bf only} on $\delta,\varepsilon,p$
(and not on $M$)
with the following property. Take $n\geq n_{\text{\tiny B}}$, $1\leq N\leq (1-p+\varepsilon)^{-n}$,
and let
$$\mathcal A:=\{-2N,\dots,-N-1,N+1,\dots,2N\}^{\lfloor \delta n\rfloor}\times
\{-N,-N+1,\dots,N\}^{n-\lfloor \delta n\rfloor}.$$
Further, assume that a random vector $\xi=(\xi_1,\dots,\xi_n)$ is uniform on $\mathcal A$.
Then
$$\Prob_\xi\big\{\cf_b \big(b_1\xi_1+\dots+b_n\xi_n,\sqrt{n}\big)
> L_{\text{\tiny B}} N^{-1}
\big\}\leq e^{-M\,n}.$$
Here, $\cf_b(\cdot,\cdot)$ denotes the L\'evy concentration function with respect to $b=(b_1,\dots,b_n)$,
a random vector with independent Bernoulli($p$) components. 
\end{theoremB}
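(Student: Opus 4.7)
The plan is to apply an Esseen-type bound to reduce the L\'evy concentration function to an integral of the characteristic function of $Z=\sum_i b_i\xi_i$, and then to estimate that integral by splitting the frequency range and combining a deterministic ``small $\theta$'' bound with a probabilistic ``large $\theta$'' bound obtained through a moment method.

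\medskip

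\textbf{Setup and pointwise bound.} Writing $\phi_Z(\theta)=\prod_{i=1}^n(1-p+pe^{i\theta\xi_i})$, Esseen's inequality gives
$$\cf_b(Z,\sqrt n)\leq C\sqrt n\int_{-1/\sqrt n}^{1/\sqrt n}|\phi_Z(\theta)|\,d\theta.$$
Since $|1-p+pe^{i\theta\xi_i}|^2=1-2p(1-p)(1-\cos(\theta\xi_i))$ and $1-\cos(2\pi x)\geq 8\|x\|_{\R/\Z}^2$, one obtains the pointwise bound $|\phi_Z(\theta)|\leq\exp\!\bigl(-8p(1-p)\sum_i\|\theta\xi_i/(2\pi)\|_{\R/\Z}^2\bigr)$.

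\medskip

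\textbf{Low frequency: deterministic Gaussian contribution.} Split the integral at $|\theta|=\pi/(2N)$. For $|\theta|\leq \pi/(2N)$, every $i\in\{1,\dots,\lfloor\delta n\rfloor\}$ has $|\xi_i|\geq N+1$, so $\|\theta\xi_i/(2\pi)\|_{\R/\Z}=|\theta\xi_i|/(2\pi)\geq|\theta|(N+1)/(2\pi)$. This produces a Gaussian integrand of the form $\exp(-c_p\delta n\,\theta^2N^2)$, whose integral is $O(1/(N\sqrt{\delta n}))$. Multiplying by the Esseen factor $\sqrt n$ yields a purely deterministic contribution bounded by $L_{\text{\tiny B}}/(2N)$, provided $L_{\text{\tiny B}}$ is taken large in terms of $\delta$ and $p$.

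\medskip

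\textbf{High frequency: moment method.} Let $I$ denote the Esseen integral restricted to $\pi/(2N)<|\theta|\leq 1/\sqrt n$. One would like to show $\Prob_\xi(\sqrt n\cdot I>L_{\text{\tiny B}}/(2N))\leq e^{-Mn}$. The key inequality is the Minkowski-type bound
$$(\Exp_\xi I^{2k})^{1/(2k)}\leq \int\bigl(\Exp_\xi|\phi_Z(\theta)|^{2k}\bigr)^{1/(2k)}d\theta,$$
which follows from generalized H\"older's inequality applied coordinatewise together with the independence of the $\xi_i$'s. The inner expectation factorizes as
$$\Exp_\xi|\phi_Z(\theta)|^{2k}=\prod_{i=1}^n\Exp_{\xi_i}\bigl(1-2p(1-p)(1-\cos(\theta\xi_i))\bigr)^k,$$
and for $|\theta|$ in the high-frequency range (so that $\theta N$ is at least a constant), a discrete Laplace/saddle-point analysis of the sum over $\xi_i$ in the arithmetic progression gives $\Exp_{\xi_i}(\cdot)^k\lesssim C_p/\sqrt k$, uniformly in $\theta$. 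Plugging into Markov's inequality with $\gamma=L_{\text{\tiny B}}/(2N)$ one obtains a bound of the form $(2N/(L_{\text{\tiny B}}\sqrt n))^{2k}\cdot(C_p/\sqrt k)^n$ (up to polynomial factors), and the bound $N\leq(1-p+\varepsilon)^{-n}$ is used to calibrate $k=k(M,p,\varepsilon,\delta)$ so that this Markov tail is at most $e^{-Mn}$ once $n\geq n_{\text{\tiny B}}$.

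\medskip

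\textbf{Main obstacle.} The delicate part is making the moment estimate $\Exp_{\xi_i}(1-2p(1-p)(1-\cos(\theta\xi_i)))^k\leq C_p/\sqrt k$ truly uniform in $\theta$ throughout the range $(\pi/(2N),1/\sqrt n]$, and in particular on the resonant boundary where $|\theta|N$ is only of constant order and the discrete average over $\xi_i$ is far from approximating its continuous analogue. A Dirichlet-kernel style analysis (or a separate treatment of this boundary stratum) is needed here. A secondary difficulty is the careful tuning of $k$, $L_{\text{\tiny B}}$, and $n_{\text{\tiny B}}$: since $N$ may be as large as $(1-p+\varepsilon)^{-n}$, the factor $N^{2k}$ appearing after Markov is genuinely exponential in $n$ when $k$ grows with $n$, so the moment bound must provide more than a bare per-factor decay in order to absorb it and still leave $e^{-Mn}$ room.
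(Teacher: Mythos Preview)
Your Fourier--Esseen approach is natural and genuinely different from the paper's, but it has a fatal obstruction, and it is precisely the one you label ``secondary.'' Grant for the moment your per-factor estimate $\Exp_{\xi_i}(1-2p(1-p)(1-\cos\theta\xi_i))^k\le C_p/\sqrt{k}$ uniformly in $\theta$. Then your Markov bound reads, up to polynomial factors,
\[
\Prob_\xi\Big\{\sqrt{n}\,I>\tfrac{L_{\text{\tiny B}}}{2N}\Big\}\;\lesssim\;\Big(\tfrac{2N}{L_{\text{\tiny B}}}\Big)^{2k}\Big(\tfrac{C_p}{\sqrt{k}}\Big)^{n}.
\]
At the top of the admissible range $N=(1-p+\varepsilon)^{-n}$, taking logarithms and dividing by $n$ gives the requirement $2k\alpha-\tfrac12\log k+\log C_p\le -M$ with $\alpha:=-\log(1-p+\varepsilon)>0$ (the term $2k\log(2/L_{\text{\tiny B}})$ is lower order in $n$ and hence cannot help). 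The function $k\mapsto \tfrac12\log k-2k\alpha$ is bounded above by a constant depending only on $p,\varepsilon$, so for $M$ larger than that constant \emph{no choice of $k$ works}. The polynomial-in-$k$ decay that Laplace gives you is genuinely the best available for generic $\theta$ (the integrand is $\asymp e^{-kp(1-p)u^2}$ near $u=0$), so there is no stronger per-factor bound to appeal to. In short, the moment method cannot deliver a probability $e^{-Mn}$ with $L_{\text{\tiny B}}$ independent of $M$; this decoupling of $L_{\text{\tiny B}}$ from $M$ is exactly the point of Theorem~B.

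Separately, your uniform per-factor bound is false as stated: if $\theta/(2\pi)=a/q$ with $q$ of order $\sqrt{n}$ (which lies in your high-frequency window), then a $\Theta(1/q)$ fraction of the values of $\xi_i$ give $\cos(\theta\xi_i)=1$, so $\Exp_{\xi_i}(\cdot)^k\gtrsim 1/\sqrt{n}$ for every $k$. One could try to excise such near-rational $\theta$ as a small-measure set, but this only adds complexity and does nothing for the obstruction in the previous paragraph.

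The paper avoids Fourier analysis entirely. Theorem~B is deduced from Theorem~\ref{th: averaging}, which tracks the $\ell_\infty$--norm of a function on $\Z$ under the iterated random averaging $f\mapsto(1-p)f+p\,f(\cdot+X_i)$. The proof proceeds in two stages: a first pass (Proposition~\ref{p: rough decay}) yields $\|f_{\mathcal A,p,\ell}\|_\infty\le L(N\sqrt{n})^{-1}$ with $L$ depending on $M$; a second pass (Propositions~\ref{prop: ell 2 update} and~\ref{prop: refinement}) then shows that each further block of $\varepsilon n$ averagings shrinks the $\ell_\infty$--norm by a fixed factor $p/\sqrt{2}+1-p$ with probability $1-e^{-Mn}$, so a bounded number of blocks (depending only on $\delta,\varepsilon,p$) brings $L$ down to an $M$-independent constant. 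The mechanism that makes this work is the $\eta$--Lipschitzness of $\log f$: it forces the ``valleys'' between clusters of spikes to be wide, which in turn makes each averaging step efficient with high probability. This structural input has no counterpart in your outline, and it is what allows $L_{\text{\tiny B}}$ to be decoupled from $M$.
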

The crucial point of this theorem is that $L_{\text{\tiny B}}$ does not depend on $M$.
Essentially, this means that the probability can be made superexponentially small in $n$ as $n$ grows,
while $L_{\text{\tiny B}}$ stays constant.
Because of the ``inversion of randomness'', a statement of this kind is translated into bounds
for the cardinality of the discretization of the sets of vectors $D_T$ with large thresholds
considered above.

\section{Preliminaries}

Denote by $\|\cdot\|_q$ the standard $\ell_q$--norm, so that
$$\big\|(x_1,x_2,\dots)\big\|_q=\bigg(\sum\limits_{i}|x_i|^q\bigg)^{1/q},\quad 1\leq q<\infty;\quad
\mbox{and}\quad \big\|(x_1,x_2,\dots)\big\|_\infty=\max\limits_i |x_i|.$$
In particular, by $\ell_1(\Z)$ we denote the space of all functions $g:\Z\to\R$ with $\sum_i |g(i)|<\infty$.
We will say that a mapping $g:\Z\to\R$ is {\it $L$--Lipschitz} for some $L>0$ if $|g(t)-g(t+1)|\leq L$
for all $t\in\Z$.

The unit Euclidean sphere in $\R^n$ will be denoted by $S^{n-1}$.
The support of a vector $y=(y_1,\dots,y_n)\in\R^n$ is $\supp y:=|\{i\leq n:\;y_i\neq 0\}|$.
The $n$--dimensional vector of all ones is denoted by $1_n$.
For an $n\times n$ matrix $A$, $\Col_i(A)$ and $\Row_i(A)$ are its columns and rows, respectively,
and $\|A\|$ is the spectral norm of $A$. The smallest singular value of $A$ is denoted by $s_{\min}(A)$.
We will rely on the standard representation $s_{\min}(A)=\min\limits_{x\in S^{n-1}}\|Ax\|_2$.

The indicator of a subset of $\R$ or an event $S$ is denoted by ${\bf 1}_S$.
For any positive integer $m$, $[m]$ denotes the integer interval $\{1,2,\dots,m\}$.
Further, for any two subsets $I,J\subset\Z$, we write $I<J$ if $i<j$ for all $i\in I$ and $j\in J$.
The {\it Minkowski sum} of two subsets $A,B$ of $\R^m$ is defined as the set of all vectors of the form $a+b$,
where $a\in A$ and $b\in B$.
For a real number $r$, by $\lfloor r\rfloor$ we denote the largest integer less than or equal to $r$, and by $\lceil r\rceil$,
the smallest integer greater than or equal to $r$.

Everywhere in this paper, $B_n(p)$ is the matrix with i.i.d.\ Bernoulli($p$) entries, i.e.\ random variables taking value $1$ with probability $p$
and $0$ with probability $1-p$.
Further, by $B_n^1(p)$
we denote the $(n-1)\times n$ matrix obtained from $B_n(p)$ by removing the last row.

The {\it L\'evy concentration function} $\cf(\xi,\cdot)$ of a random variable $\xi$ is defined by
$$\cf\big(\xi,t\big):=\sup\limits_{\lambda\in\R}\Prob\big\{|\xi-\lambda|\leq t\big\},\quad t\geq 0.$$
We will need the following classical inequality:
\begin{lemma}[L\'evy--Kolmogorov--Rogozin, \cite{Rogozin}]\label{l: lkr}
Let $\xi_1,\dots,\xi_m$ be independent real valued random variables. Then for any real numbers $r_1,\dots,r_m>0$
and $r\geq \max_{i\leq m} r_i$,
$$
\cf\Big(\sum_{i=1}^m \xi_i,r\Big)\leq \frac{C_{\text{\tiny\ref{l: lkr}}} r}{\sqrt{\sum_{i=1}^m (1-\cf(\xi_i,r_i))r_i^2}}.
$$
Here, $C_{\text{\tiny\ref{l: lkr}}}>0$ is a universal constant.
\end{lemma}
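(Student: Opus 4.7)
The proof I would give follows the classical Fourier-analytic strategy of Esseen and Rogozin. Write $S_m := \sum_{i=1}^m \xi_i$ and let $\varphi_i(\theta) := \mathbb{E}\,e^{i\theta\xi_i}$ denote the characteristic function of $\xi_i$. The starting point is Esseen's smoothing inequality, which asserts that for any real-valued random variable $Z$ and any $r>0$,
$$\cf(Z,r) \;\leq\; C_1\,r\int_{-1/r}^{1/r}|\varphi_Z(\theta)|\,d\theta.$$
Applied to $Z = S_m$ and combined with the factorization $\varphi_{S_m}(\theta)=\prod_i\varphi_i(\theta)$ (valid by independence) and the elementary inequality $|u|\leq\exp(-(1-|u|^2)/2)$ for $|u|\leq 1$, this yields
$$\cf(S_m,r) \;\leq\; C_1\,r\int_{-1/r}^{1/r}\exp\!\Bigl(-\tfrac{1}{2}\sum_{i=1}^m\bigl(1-|\varphi_i(\theta)|^2\bigr)\Bigr)\,d\theta.$$
So everything reduces to a pointwise lower bound on each $1-|\varphi_i(\theta)|^2$ in terms of $\theta^2 r_i^2 (1-\cf(\xi_i,r_i))$ on the range $|\theta|\leq 1/r$.

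To obtain such a bound I would symmetrize. Let $\xi_i'$ be an independent copy of $\xi_i$ and set $\tilde\xi_i := \xi_i - \xi_i'$; then $\tilde\xi_i$ is symmetric and $|\varphi_i(\theta)|^2 = \mathbb{E}\cos(\theta\tilde\xi_i)$, so
$$1-|\varphi_i(\theta)|^2 \;=\; \mathbb{E}\bigl[1-\cos(\theta\tilde\xi_i)\bigr].$$
Conditioning on $\xi_i'$ gives the elementary symmetrization bound $\Prob\{|\tilde\xi_i|\geq r_i\}\geq 1-\cf(\xi_i,r_i)$: indeed, for any fixed value $y$ of $\xi_i'$, $\Prob\{|\xi_i - y|\leq r_i\}\leq \cf(\xi_i,r_i)$. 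Combining this with $1-\cos(y)\geq (2/\pi^2)y^2$ on $[-\pi,\pi]$ and restricting $|\theta|\leq 1/r \leq 1/r_i$, the contribution of the set $\{r_i \leq |\tilde\xi_i| \leq \pi/|\theta|\}$ to the expectation is at least $c_2 \theta^2 r_i^2 (1-\cf(\xi_i,r_i))$, yielding the key estimate
$$1-|\varphi_i(\theta)|^2 \;\geq\; c_2\,\theta^2\,r_i^2\,\bigl(1-\cf(\xi_i,r_i)\bigr) \qquad \text{for } |\theta|\leq 1/r.$$

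Substituting this into the exponent, extending the integration to all of $\mathbb{R}$, and evaluating the resulting Gaussian integral gives
$$\cf(S_m,r) \;\leq\; C_1\,r\int_{\mathbb{R}}\exp\!\Bigl(-\tfrac{c_2}{2}\theta^2\sum_i r_i^2(1-\cf(\xi_i,r_i))\Bigr)d\theta \;\leq\; \frac{C_3\,r}{\sqrt{\sum_i r_i^2(1-\cf(\xi_i,r_i))}},$$
which is the claimed bound after adjusting constants. The principal technical obstacle is the symmetrization step: the inequality $1-\cos(y)\geq cy^2$ holds only on $[-\pi,\pi]$ (for instance $1-\cos(2\pi)=0$), so one must carefully control the contribution of the tail event $\{|\tilde\xi_i|>\pi/|\theta|\}$ rather than simply $\{|\tilde\xi_i|\geq r_i\}$. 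The resolution exploits that $|\theta|r_i\leq 1$ forces $\pi/|\theta|\geq \pi r_i$, so that a dyadic decomposition of $\{|\tilde\xi_i|\geq r_i\}$ together with the symmetrization bound recovers an $\Omega(1-\cf(\xi_i,r_i))$ fraction of the probability in the relevant range; pinning down the precise constants is the delicate part of Rogozin's original argument.
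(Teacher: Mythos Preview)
The paper does not prove this lemma; it is quoted as a classical result with a reference to Rogozin, so there is no in-paper proof to compare against. Your Esseen--smoothing/symmetrization outline is the standard Fourier route and the first several steps are fine, but the ``key estimate''
\[
1-|\varphi_i(\theta)|^2\;\ge\;c_2\,\theta^2 r_i^2\bigl(1-\cf(\xi_i,r_i)\bigr),\qquad |\theta|\le 1/r,
\]
is false as a pointwise-in-$\theta$ statement, and the dyadic repair you propose cannot rescue it. Take $\xi_i$ uniform on $\{0,M\}$ with $M$ large and $r_i=r=1$: then $\tilde\xi_i\in\{-M,0,M\}$ with probabilities $\tfrac14,\tfrac12,\tfrac14$, so $1-\cf(\xi_i,1)=\tfrac12$, yet at $\theta=2\pi/M\in(0,1]$ one has $1-|\varphi_i(\theta)|^2=\tfrac12(1-\cos 2\pi)=0$. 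Worse, for any $|\theta|$ with $\pi/|\theta|<M$ the set $\{r_i\le|\tilde\xi_i|\le\pi/|\theta|\}$ that carries your lower bound is \emph{empty}, so no dyadic splitting of $\{|\tilde\xi_i|\ge r_i\}$ can place an $\Omega(1-\cf(\xi_i,r_i))$ fraction of the mass there. The obstruction is structural: $\tilde\xi_i$ may put all its nonzero mass on a single large atom, and then $1-|\varphi_i(\theta)|^2$ genuinely vanishes along an arithmetic progression of $\theta$'s inside $[-1/r,1/r]$.

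So the step you flag as ``the delicate part of Rogozin's original argument'' is not a matter of chasing constants through a dyadic sum; a pointwise lower bound of the form you need simply does not exist. Rogozin's own 1961 proof is in fact combinatorial (a replacement argument reducing to two-valued variables), not Fourier-analytic, and Esseen's later characteristic-function proof handles this issue by a different mechanism than the one you outline. If you want a Fourier argument, you must either work with an integrated (in $\theta$) lower bound and combine it with the exponential more carefully, or truncate $\tilde\xi_i$ and invoke a separate comparison step; see, e.g., Petrov, \emph{Sums of Independent Random Variables}, Chapter~III.
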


We recall some definitions from \cite{RV adv}.
Given $\delta\in(0,1]$ and $\nu\in(0,1]$, denote by $\comp_n(\delta,\nu)$ the set of all unit vectors $x\in\R^n$
such that there is $y=y(x)\in\R^n$ with $|\supp y|\leq \delta n$ and $\|x-y\|_2\leq \nu$
(in \cite{RV adv}, such vectors are called {\it compressible}).
Further, we define the complementary set of {\it incompressible} vectors 
$\incomp_n(\delta,\nu):=S^{n-1}\setminus \comp_n(\delta,\nu)$.
We note that a similar partition of the unit vectors was used earlier in \cite{LPRT}.

Following an approach developed in \cite{RV adv}, we can write
for any random matrix $A_n$ with the distribution invariant under permutations of columns
\begin{equation}\label{eq: comp-incomp}
\begin{split}
\Prob\big\{s_{\min}(A_n)\leq t/\sqrt{n}\big\}
&\leq \Prob\big\{\|A_nx\|_2\leq t/\sqrt{n}\mbox{ for some }x\in \comp_n(\delta,\nu)\big\}\\
&\hspace{1cm}+\Prob\big\{\|A_nx\|_2\leq t/\sqrt{n}\mbox{ for some }x\in \incomp_n(\delta,\nu)\big\}\\
&\leq\Prob\big\{\|A_nx\|_2\leq t/\sqrt{n}\mbox{ for some }x\in \comp_n(\delta,\nu)\big\}\\
&\hspace{1cm}+\frac{1}{\delta}\Prob\big\{|\langle\Col_n(A_n),Y_n\rangle|\leq t/\nu\}\big\},
\end{split}
\end{equation}
where $\delta,\nu$ are arbitrary numbers in $(0,1)$ (see \cite[formula~(3.2) and Lemma~3.5]{RV adv}),
and $Y_n$ is a random unit vector orthogonal to the first $n-1$ columns of $A_n$.
A satisfactory estimate for the first term for sufficiently small $\delta$ and $\nu$ can be obtained as a simple compilation
of known results (see Proposition~\ref{l: compress} below).
The following is a version of the {\it tensorization lemma} from \cite{RV adv}.
\begin{lemma}\label{l: tensorization}
Let $\xi_1,\dots,\xi_m$ be independent random variables.
\begin{itemize}

\item[(1)] Assume that for some $\varepsilon_0>0$, $K>0$ and all $\varepsilon\geq \varepsilon_0$ and $k\leq m$ we have
$$\Prob\big\{|\xi_k|\leq \varepsilon\}\leq K\varepsilon.$$
Then for each $\varepsilon\geq \varepsilon_0$,
$$\Prob\big\{\|(\xi_1,\xi_2,\dots,\xi_m)\|_2\leq \varepsilon\sqrt{m}\big\}\leq (C_{\text{\tiny\ref{l: tensorization}}}\,K\,\varepsilon)^m,$$
where $C_{\text{\tiny\ref{l: tensorization}}}>0$ is a universal constant.

\item[(2)] Assume that for some $\eta>0$, $\tau>0$ and all $k\leq m$
we have $\Prob\big\{|\xi_k|\leq \eta\}\leq \tau$.
Then for every $\varepsilon\in(0,1]$,
$$\Prob\big\{\|(\xi_1,\xi_2,\dots,\xi_m)\|_2\leq \eta\sqrt{\varepsilon m}\big\}\leq \bigg(\frac{e}{\varepsilon}\bigg)^{\varepsilon m}
\tau^{m-\varepsilon m}.$$
\end{itemize}
\end{lemma}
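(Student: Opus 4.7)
\textbf{Proof proposal for Lemma~\ref{l: tensorization}.}

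For part~(1), the plan is the standard exponential Markov trick. Rewrite the target event as $\sum_{k=1}^m \xi_k^2\leq \varepsilon^2 m$, and for a parameter $t>0$ bound
\[
\Prob\Big\{\sum_k \xi_k^2\leq \varepsilon^2 m\Big\}
\leq e^{t\varepsilon^2 m}\,\Exp\,e^{-t\sum_k \xi_k^2}
=e^{t\varepsilon^2 m}\prod_{k=1}^m \Exp\,e^{-t\xi_k^2},
\]
using independence. Each single-coordinate moment is handled by the layer-cake representation
\[
\Exp\,e^{-t\xi_k^2}=\int_0^\infty 2ts\,e^{-ts^2}\,\Prob\{|\xi_k|\leq s\}\,ds,
\]
splitting the integral at $s=\varepsilon_0$: on $[0,\varepsilon_0]$ the probability is at most $K\varepsilon_0$, and on $[\varepsilon_0,\infty)$ the hypothesis gives $\Prob\{|\xi_k|\leq s\}\leq K s$, producing a Gaussian integral $K\int_0^\infty 2ts^2 e^{-ts^2}\,ds=K\sqrt{\pi}/(2\sqrt{t})$. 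Choosing $t=1/\varepsilon^2$ kills the first factor ($e^{t\varepsilon^2 m}=e^m$) and, because $\varepsilon\geq \varepsilon_0$, yields $\Exp\,e^{-t\xi_k^2}\leq K\varepsilon\,(1+\tfrac{\sqrt{\pi}}{2})$. Taking the product gives the desired $(C_{\text{\tiny\ref{l: tensorization}}}K\varepsilon)^m$.

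For part~(2), the approach is combinatorial rather than analytic. Let $I:=\{k\leq m: |\xi_k|\leq \eta\}$. If $\|(\xi_1,\dots,\xi_m)\|_2\leq \eta\sqrt{\varepsilon m}$, then every $k\notin I$ contributes at least $\eta^2$ to $\sum_k \xi_k^2\leq \eta^2\varepsilon m$, so $|I^c|\leq \varepsilon m$, i.e.\ $|I|\geq (1-\varepsilon)m$. Hence the target event is contained in the union, over subsets $J\subset [m]$ with $|J|=\lceil (1-\varepsilon)m\rceil$, of $\{J\subset I\}$. By independence, $\Prob\{J\subset I\}\leq \tau^{|J|}$, so a union bound yields
\[
\Prob\big\{\|(\xi_1,\dots,\xi_m)\|_2\leq \eta\sqrt{\varepsilon m}\big\}
\leq \binom{m}{\lceil (1-\varepsilon)m\rceil}\,\tau^{\lceil(1-\varepsilon)m\rceil}
\leq \Big(\frac{e}{\varepsilon}\Big)^{\varepsilon m}\tau^{m-\varepsilon m},
\]
where the last inequality uses the standard estimate $\binom{m}{\lfloor \varepsilon m\rfloor}\leq (e/\varepsilon)^{\varepsilon m}$.

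Neither part presents a substantial obstacle; the only delicate point is part~(1), where one must correctly treat the regime $s<\varepsilon_0$ in the layer-cake integral (the hypothesis $\Prob\{|\xi_k|\leq\varepsilon\}\leq K\varepsilon$ only holds for $\varepsilon\geq\varepsilon_0$), and verify that the contribution from this short interval is absorbed into the dominant $K\varepsilon$ bound after the choice $t=1/\varepsilon^2$.
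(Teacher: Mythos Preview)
Your proof is correct and matches the paper's approach: the paper defers part~(1) to \cite{RV adv}, where precisely this exponential Markov/layer-cake argument is given, and for part~(2) the paper's remark records exactly the combinatorial observation you use, namely that $\|(\xi_1,\dots,\xi_m)\|_2\leq \eta\sqrt{\varepsilon m}$ forces $|\{i:|\xi_i|>\eta\}|\leq \varepsilon m$, followed by a union bound over the small index set.
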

\begin{Remark}
The second assertion of the lemma follows immediately by noting that the condition
$\|(\xi_1,\xi_2,\dots,\xi_m)\|_2\leq \eta\sqrt{\varepsilon m}$ implies that $|\{i\leq m:\,
|\xi_i|> \eta\}|\leq \varepsilon m$.
For a proof of the first assertion, see \cite{RV adv}.
\end{Remark}

Further, we recall a standard estimate for the spectral norm of random matrices with i.i.d.\ subgaussian entries
(for a proof, see, for example, \cite[Theorem 5.39]{V12}).
\begin{lemma}\label{l: sp norm}
For any $M,L\geq 1$ there is $C_{\text{\tiny M,L}}>0$ depending only on $M$ and $L$ with the following property.
Let $n\geq 1$ and let $A$ be an $n\times n$ random matrix with i.i.d.\ entries $a_{ij}$ of zero mean,
and such that $(\Exp|a_{ij}|^q)^{1/q}\leq M\sqrt{q}$ for all $q\geq 1$. Then with probability at least $1-\exp(-Ln)$
we have $\|A\|\leq C_{\text{\tiny M,L}}\sqrt{n}$.
\end{lemma}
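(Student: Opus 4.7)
My plan is to use the standard $\varepsilon$-net discretization of the sphere. Using the variational formula $\|A\|=\sup_{x,y\in S^{n-1}}\langle Ax,y\rangle$, I would first reduce control of $\|A\|$ to control of this bilinear form on a finite set. Fix a $\tfrac14$-net $\net\subset S^{n-1}$ of cardinality at most $9^n$ (standard volumetric estimate). A one-line approximation argument shows $\|A\|\leq 2\max_{x,y\in\net}\langle Ax,y\rangle$, so it suffices to control the bilinear form at points of $\net\times\net$ and take a union bound.

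Next, I would exploit the subgaussian moment assumption. For each fixed pair $x,y\in S^{n-1}$,
$$\langle Ax,y\rangle=\sum_{i,j=1}^n a_{ij}\,x_i y_j$$
is a sum of $n^2$ independent centered random variables with coefficients $c_{ij}=x_i y_j$ satisfying $\sum_{i,j}c_{ij}^2=1$. The hypothesis $(\Exp|a_{ij}|^q)^{1/q}\leq M\sqrt{q}$ is equivalent (up to universal constants) to the subgaussian tail bound $\Prob\{|a_{ij}|>t\}\leq 2\exp(-c t^2/M^2)$; this is standard (e.g.\ one checks that $\Exp\exp(\lambda a_{ij}/M)\leq\exp(C\lambda^2)$ by expanding the series and using the moment bound term by term). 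By the standard MGF calculation, a linear combination of independent centered subgaussians with coefficients $c_{ij}$ is again subgaussian with parameter $\sqrt{\sum c_{ij}^2}$, yielding
$$\Prob\bigl\{|\langle Ax,y\rangle|>t\bigr\}\leq 2\exp\bigl(-c' t^2/M^2\bigr),\quad t\geq 0,$$
for a universal constant $c'>0$.

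Now I would apply the union bound. With $t=C_{\text{\tiny M,L}}\sqrt{n}/2$ I get
$$\Prob\bigl\{\|A\|>C_{\text{\tiny M,L}}\sqrt{n}\bigr\}\leq 9^n\cdot 9^n\cdot 2\exp\bigl(-c' C_{\text{\tiny M,L}}^2 n/(4M^2)\bigr).$$
Choosing $C_{\text{\tiny M,L}}$ large enough, depending only on $M$ and $L$, so that $c' C_{\text{\tiny M,L}}^2/(4M^2)\geq L+2\log 9+1$, absorbs both the logarithm of the net size and the required $\exp(-Ln)$ slack, completing the proof.

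The argument is essentially mechanical, with no serious obstacle. The one piece that requires a moment's care, rather than difficulty, is the translation between the $L^q$-moment hypothesis and a clean subgaussian MGF bound with a constant controlled by $M$; this equivalence is classical and well documented (it is precisely the way \cite{V12} treats it), but one must verify that the constant $c'$ in the tail bound depends only on $M$ so that the final constant $C_{\text{\tiny M,L}}$ depends only on $M$ and $L$ as asserted.
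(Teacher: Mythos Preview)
Your proof is correct and is exactly the standard $\varepsilon$-net argument; the paper does not give its own proof but simply cites \cite[Theorem~5.39]{V12}, which proceeds in the same way. There is nothing to add.
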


The following is an easy consequence of Lemma~\ref{l: tensorization}:
\begin{lemma}\label{l: aux single vector}
For any $p\in(0,1/2]$ there is
$\gamma_{\text{\tiny\ref{l: aux single vector}}}>0$ which may only depend on $p$,
such that for every $\varepsilon\in(0,1]$,
$n\geq 2$ and arbitrary $s\in\R$ and $x\in S^{n-1}$,
$$\Prob\big\{\big\|(B_n^1(p)+s\,1_{n-1}1_n^\top) x\big\|_2\leq \gamma_{\text{\tiny\ref{l: aux single vector}}}\sqrt{\varepsilon n}\big\}
\leq \bigg(\frac{e}{\varepsilon}\bigg)^{\varepsilon (n-1)}
(1-p)^{(n-1)(1-\varepsilon)}.$$
\end{lemma}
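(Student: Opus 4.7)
My plan is to exploit the fact that the rows $\Row_i(M)$ of $M:=B_n^1(p)+s\,1_{n-1}1_n^\top$ are i.i.d., with the $i$-th row having entries $b_{ij}+s$. Setting $\xi_i:=\langle \Row_i(M),x\rangle$, the variables $\xi_1,\ldots,\xi_{n-1}$ are i.i.d.\ and $\|Mx\|_2^2=\sum_{i=1}^{n-1}\xi_i^2$, so the conclusion reduces to applying the second part of Lemma~\ref{l: tensorization} to $(\xi_1,\ldots,\xi_{n-1})$ with $\tau=1-p$. Note that $\xi_i=\sum_j b_{ij}x_j+s\sum_j x_j$, so the translation invariance of the L\'evy concentration function absorbs the deterministic shift, and what remains is to establish that for some $\eta_0=\eta_0(p)>0$ one has $\cf\big(\sum_j b_j x_j,\eta_0\big)\leq 1-p$ for every unit vector $x\in S^{n-1}$, where $b_j$ are i.i.d.\ Bernoulli($p$).

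This anticoncentration estimate is the heart of the argument, and I would prove it by a dichotomy on $\max_j|x_j|$. If some $|x_{j^*}|>2\eta_0$, I condition on $(b_j)_{j\neq j^*}$: the conditional law of $\sum_j b_j x_j$ is supported on two points separated by $|x_{j^*}|$, and since $p\leq 1/2$ any interval of length $2\eta_0<|x_{j^*}|$ has conditional mass at most $\max(p,1-p)=1-p$; averaging yields $\cf\big(\sum_j b_j x_j,\eta_0\big)\leq 1-p$. If instead $\max_j|x_j|\leq 2\eta_0$, I apply Lemma~\ref{l: lkr} with $r_j:=|x_j|/3$ and $r:=\eta_0$; the constraint $r\geq\max_j r_j$ holds because $\max_j r_j\leq 2\eta_0/3<\eta_0$. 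Since $|x_j|>2r_j$, any interval of length $2r_j$ contains at most one of the two values of $b_jx_j$, giving $\cf(b_jx_j,r_j)\leq 1-p$ and hence $1-\cf(b_jx_j,r_j)\geq p$; summing and using $\sum_j x_j^2=1$, the denominator in Rogozin's inequality is at least $\sqrt{p/9}$, so $\cf\big(\sum_j b_j x_j,\eta_0\big)\leq 3C_{\text{\tiny\ref{l: lkr}}}\,\eta_0/\sqrt{p}$. Choosing $\eta_0=\eta_0(p)>0$ small enough (e.g.\ $\eta_0\leq (1-p)\sqrt{p}/(3C_{\text{\tiny\ref{l: lkr}}})$) makes this $\leq 1-p$.

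With the concentration bound in hand, Lemma~\ref{l: tensorization}(2) applied to $(\xi_1,\ldots,\xi_{n-1})$ with $\eta=\eta_0$, $\tau=1-p$, $m=n-1$ gives
$$\Prob\big\{\|Mx\|_2\leq \eta_0\sqrt{\varepsilon(n-1)}\big\}\leq (e/\varepsilon)^{\varepsilon(n-1)}(1-p)^{(n-1)(1-\varepsilon)}.$$
Taking $\gamma_{\text{\tiny\ref{l: aux single vector}}}:=\eta_0/\sqrt{2}$ ensures $\gamma_{\text{\tiny\ref{l: aux single vector}}}\sqrt{\varepsilon n}\leq \eta_0\sqrt{\varepsilon(n-1)}$ for all $n\geq 2$, so the event in the statement is contained in the event bounded above, completing the proof. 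The main obstacle is the dichotomy: the constant $1-p$ is sharp (witnessed by $x=e_1$, which yields $\cf=1-p$ at arbitrarily small scales), so neither Rogozin alone (which requires spread-out coordinates) nor conditioning alone (which requires a macroscopic coordinate) is sufficient by itself, and $\eta_0$ must be calibrated so that both regimes yield exactly the same bound.
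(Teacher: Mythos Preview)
Your proof is correct and follows essentially the same approach as the paper: both arguments reduce to the anticoncentration bound $\cf\big(\sum_i b_i x_i,\eta_0\big)\leq 1-p$ via a dichotomy on $\|x\|_\infty$ (conditioning on all but the largest coordinate in the concentrated case), and then invoke part~(2) of the tensorization lemma. The only difference is in the spread-out case: the paper appeals to a Berry--Esseen/CLT type estimate (citing an external lemma), while you use the L\'evy--Kolmogorov--Rogozin inequality (Lemma~\ref{l: lkr}) with $r_j=|x_j|/3$, which is arguably cleaner since that lemma is already stated in the preliminaries and yields the bound directly without any approximation step.
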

\begin{proof}
Let $b_1,\dots,b_n$ be i.i.d.\ Bernoulli($p$) random variables.
It is not difficult to check that
\begin{equation}\label{eq: aux 09823982-503}
\cf\Big(\sum_{i=1}^n b_i x_i, r\Big)\leq 1-p
\end{equation}
for some $r>0$ which may only depend on $p$.
For a proof of this fact, one may consider two possibilities: first when the vector $x$
has a ``large'' $\ell_\infty$--norm, in which case the assertion follows by conditioning on all $b_i$'s
except the one corresponding to the largest component of $x$, and, second, when the vector $x$ has a ``small''
$\ell_\infty$--norm in which case, by the Central Limit Theorem, the random linear combination is approximately normally distributed,
see, for example, \cite[Lemma~2.1]{CT}.

Applying the second assertion of the Tensorization Lemma to \eqref{eq: aux 09823982-503},
we get the statement.
\end{proof}

By combining Lemma~\ref{l: aux single vector} with an $\varepsilon$-net argument, we obtain
a small ball probability estimate for compressible vectors.
The only difference from a standard argument here is due to the fact that for $s\neq -p$,
the matrix $B_n^1(p)+s\,1_{n-1}1_n^\top$ has typical spectral norm of order $\Theta((s+p)n)$
rather than $\Theta(\sqrt{n})$ in the simplest setting of a centered random matrix with normalized independent entries.
The net therefore has to be made ``denser'' in the direction $1_n$.

\begin{prop}\label{l: compress}
For any $\varepsilon\in(0,1]$ and $p\in(0,1/2]$ there are $n_{\text{\tiny\ref{l: compress}}}\in\N$,
$\gamma_{\text{\tiny\ref{l: compress}}}>0$ and
$\delta_{\text{\tiny\ref{l: compress}}},\nu_{\text{\tiny\ref{l: compress}}}\in(0,1)$ depending only on $\varepsilon$ and $p$
such that for $n\geq n_{\text{\tiny\ref{l: compress}}}$ and arbitrary $s\in\R$,
$$\Prob\big\{\big\|(B_n^1(p)+s\,1_{n-1}1_n^\top) x\big\|_2\leq \gamma_{\text{\tiny\ref{l: compress}}}\sqrt{n}
\mbox{ for some }x\in
\comp_n(\delta_{\text{\tiny\ref{l: compress}}},\nu_{\text{\tiny\ref{l: compress}}})\big\}
\leq \big(1-p+\varepsilon\big)^{n}.$$
\end{prop}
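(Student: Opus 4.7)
The plan is to combine the single-vector bound from Lemma~\ref{l: aux single vector} with an $\varepsilon$-net argument on the set of compressible vectors, following the standard pattern of \cite{RV adv,LPRT} but accounting for the rank-one shift $s\,1_{n-1}1_n^\top$. First I would fix $\varepsilon_0=\varepsilon_0(\varepsilon,p)\in(0,1)$ small enough that $(e/\varepsilon_0)^{\varepsilon_0}(1-p)^{1-\varepsilon_0}\leq 1-p+\varepsilon/3$; applied with this $\varepsilon_0$, Lemma~\ref{l: aux single vector} yields a single-vector small-ball probability of at most $(1-p+\varepsilon/3)^{n-1}$ at scale $\gamma_0\sqrt n$ for an appropriate $\gamma_0=\gamma_0(\varepsilon,p)>0$, uniformly in $s\in\R$ and in the unit vector. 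Lemma~\ref{l: sp norm}, applied to the centered matrix $B_n^1(p)-p\,1_{n-1}1_n^\top$ (whose entries are bounded, hence subgaussian with parameter depending only on $p$), simultaneously provides the event $\|B_n^1(p)-p\,1_{n-1}1_n^\top\|\leq K\sqrt n$ with probability $1-e^{-Ln}$, where $L$ is chosen larger than the target exponent.

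The main issue, flagged in the remark preceding the proposition, is that $A:=B_n^1(p)+s\,1_{n-1}1_n^\top$ has spectral norm of order $(|s|+p)n$ rather than $\sqrt n$, so a naive net on compressible vectors would be overly coarse in the $1_n$-direction. I would therefore decompose every unit vector as $x=\alpha\,e_0+z$ with $e_0:=1_n/\sqrt n$ and $z\perp 1_n$, and build a two-scale net $\mathcal N$ of points of the form $\tilde y=y_0+c\,1_n$: here $y_0$ ranges over a standard sparse-compressible net $\mathcal N_{\mathrm{sp}}\subset S^{n-1}$ of cardinality at most $\binom{n}{\delta n}(C/\nu')^{\delta n}$ at resolution $\nu'$, while $c$ ranges over a much finer one-dimensional net $\mathcal N_c$ of spacing $\eta_c\sim\gamma_0/((|s|+p+1)n)$. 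For each $x\in\comp_n(\delta,\nu)$ one would approximate first the sparse part of $x$ by some $y_0\in\mathcal N_{\mathrm{sp}}$ and then the residual scalar $c^*=\langle 1_n,x-y_0\rangle/n$ by some $c\in\mathcal N_c$. The $(1_n)^\perp$-component of $x-\tilde y$ has Euclidean norm $\lesssim \nu+\nu'$, and since $A$ acts on $(1_n)^\perp$ as $B_n^1(p)-p\,1_{n-1}1_n^\top$, it contributes only $\lesssim K\sqrt n(\nu+\nu')$ to $\|A(x-\tilde y)\|_2$; the $1_n$-component has magnitude $\lesssim\eta_c\sqrt n$ but is amplified by $\|A\,1_n\|_2=O((|s|+p)n^{3/2})$, yielding an additional contribution of $O(\gamma_0\sqrt n)$ by the choice of $\eta_c$.

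A union bound then gives
\[
\Prob\big\{\exists\,x\in\comp_n(\delta,\nu):\|Ax\|_2\leq \gamma_0\sqrt n/2\big\}\leq e^{-Ln}+|\mathcal N|\cdot (1-p+\varepsilon/3)^{n-1},
\]
with $|\mathcal N|\leq (eC/(\delta\nu'))^{\delta n}\cdot O((|s|+p+1)\sqrt n/\gamma_0)$. Picking $\delta,\nu'\in(0,1)$ small (depending only on $\varepsilon,p$) so that $(eC/(\delta\nu'))^{\delta}\leq (1-p+\varepsilon/2)/(1-p+\varepsilon/3)$ absorbs the sparse-net factor into the target exponential, while the remaining polynomial-in-$|s|$ factor is swallowed by the exponential slack $((1-p+\varepsilon)/(1-p+\varepsilon/2))^n$ for $n\geq n_{\text{\tiny\ref{l: compress}}}$. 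The most delicate point is keeping $n_{\text{\tiny\ref{l: compress}}}$ independent of $s$: for $|s|$ enormous the straightforward absorption fails, and one must then fall back on the elementary lower bound $\|Ax\|_2\geq\|PB_n^1(p)x\|_2$, where $P$ is the orthogonal projection in $\R^{n-1}$ onto $1_{n-1}^\perp$. This reduces the problem to a small-ball bound for the $s$-independent matrix $PB_n^1(p)$, which can in turn be handled by the same compressible-net machinery applied to the centered random matrix.
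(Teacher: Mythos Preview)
Your approach is essentially the paper's, but you have a real gap at the very point you flag. The paper avoids the $s$-dependence not by a fallback argument but by a simple observation you are missing: on the event $\{\|B_n^1(p)-p\,1_{n-1}1_n^\top\|\le K\sqrt n\}$, any unit vector $x$ with $\big|\sum_i x_i\big|\ge \tfrac{2K+2\gamma}{|s+p|}$ automatically satisfies
\[
\|Ax\|_2\ge |s+p|\sqrt{n-1}\,\Big|\sum_i x_i\Big|-K\sqrt n>\gamma\sqrt n,
\]
so only vectors with $\sum_i x_i$ in an interval of length $O(K/|s+p|)$ are relevant. Since your required spacing in the $1_n$-direction is $\Theta(\gamma/(|s+p|n))$ (this is exactly your $\eta_c$, up to the scaling between $c$ and $\sum_i x_i$), the number of one-dimensional net points is $O(Kn/\gamma)$, depending only on $p,\varepsilon$ and polynomially on $n$ --- in particular \emph{not} on $s$. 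The paper implements this by slicing $\comp_n(\delta,\nu)$ into slabs $S_\ell$ according to the value of $\sum_i x_i$, and noting that only $O((K+\gamma)/\gamma)$ slabs can contribute. With this observation your argument closes with $n_{\text{\tiny\ref{l: compress}}}$ independent of $s$, and no fallback is needed.

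Your proposed fallback via the projection $P$ onto $1_{n-1}^\perp$ is not as innocent as you suggest. While $\|Ax\|_2\ge\|PB_n^1(p)x\|_2$ is correct, the matrix $PB_n^1(p)$ has rows $R_i-\bar R$ (with $\bar R$ the average row of $B_n^1(p)$), which are \emph{not} independent; so Lemma~\ref{l: aux single vector} and the tensorization behind it do not apply directly. One can rescue this by writing $\|PB_n^1(p)x\|_2^2=\sum_i(Z_i-\bar Z)^2$ and taking a union bound over a one-dimensional net of possible values of $\bar Z$, but that is extra work, and your phrase ``the centered random matrix'' misidentifies the object. The slab observation above is both simpler and what the paper actually does.
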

\begin{proof}
Choose any $\varepsilon\in(0,1]$ and $p\in(0,1/2]$, and fix $s\in\R$.
It will be convenient to work with parameter $\widetilde s:=s+p$. Without loss of generality,
we can assume that $\widetilde s\neq 0$.
By Lemma~\ref{l: sp norm}, there is $L>0$ which may only depend on $p$ such that for every $n\geq 2$
the event
$$\Event:=\big\{\|B_n^1(p)-p\,1_{n-1}1_n^\top \|\leq L\sqrt{n}\big\}$$
has probability at least $1-2^{-n}$.

Given an $\widetilde \varepsilon\in(0,1]$ (which will be chosen later),
define
$$\delta:=\widetilde \varepsilon;\quad 
\gamma:=\gamma_{\text{\tiny\ref{l: aux single vector}}}\sqrt{\widetilde\varepsilon};\quad
\nu:=\frac{\gamma}{32L}.$$
We shall partition the set $\comp_n(\delta,\nu)$
into subsets $S_\ell$ of the form
$$S_\ell:=\comp_n(\delta,\nu)\cap \Big\{x\in\R^n:\;\sum\nolimits_{i=1}^n x_i \in
\Big[\frac{\gamma\ell}{4|\widetilde s|},\frac{\gamma(\ell+1)}{4|\widetilde s|}\Big)\Big\},
\quad \ell\in\Z.$$

First, we observe that a standard volumetric argument, together with the definition
of compressible vectors, implies that for any $\ell\in\Z$ the set $S_\ell$
admits a Euclidean $\big(\frac{\gamma}{16 L}+2\nu\big)$--net $\Net_\ell\subset S_\ell$
of cardinality at most ${n\choose{\lfloor \delta n\rfloor}}\big(\frac{C'L}
{\gamma}\big)^{\lfloor \delta n\rfloor}$,
for some universal constant $C'>0$.
By the definition of $\Net_\ell$ and $S_\ell$, for any $x\in S_\ell$ there is $y\in \Net_\ell$
such that $\|x-y\|_2\leq \big(\frac{\gamma}{16 L}+2\nu\big)=\frac{\gamma}{8 L}$ and
$\big|\sum_{i=1}^n (x_i-y_i)\big|\leq \frac{\gamma}{4|\widetilde s|}$, implying that
$$
\big\|(B_n^1(p)-p\,1_{n-1}1_n^\top+\widetilde s\,1_{n-1}1_n^\top) (x-y)\big\|_2
\leq \|B_n^1(p)-p\,1_{n-1}1_n^\top\|\,\frac{\gamma}{8 L}+|\widetilde s|\,\sqrt{n-1}\frac{\gamma}{4|\widetilde s|}
< \frac{\gamma\sqrt{n}}{2}
$$
everywhere on $\Event$. Hence,
\begin{align*}
\Prob&\big(\big\{\big\|(B_n^1(p)-p\,1_{n-1}1_n^\top+\widetilde s\,1_{n-1}1_n^\top) x\big\|_2\leq
\mbox{$\frac{\gamma}{2}$}\sqrt{n}
\mbox{ for some }x\in
S_\ell\big\}\cap\Event\big)\\
&\hspace{3cm}\leq
|\Net_\ell|\,\max\limits_{x\in\Net_\ell}\Prob\big\{
\big\|(B_n^1(p)-p\,1_{n-1}1_n^\top+\widetilde s\,1_{n-1}1_n^\top) x\big\|_2\leq \gamma\sqrt{n}\big\}\\
&\hspace{3cm}\leq {n\choose{\lfloor \delta n\rfloor}}\bigg(\frac{C'L}
{\gamma}\bigg)^{\lfloor \delta n\rfloor}
\bigg(\frac{e}{\widetilde \varepsilon}\bigg)^{\widetilde \varepsilon (n-1)}
(1-p)^{(n-1)(1-\widetilde \varepsilon)}.
\end{align*}

Observe further that for all vectors $x\in S^{n-1}$ with $\big|\sum_{i=1}^n x_i\big|\geq \frac{2L+2\gamma}{|\widetilde s|}$,
everywhere on the event $\Event$ we have
$$\big\|(B_n^1(p)-p\,1_{n-1}1_n^\top+\widetilde s\,1_{n-1}1_n^\top)
x\big\|_2\geq |\widetilde s|\,\sqrt{n-1}\,\Big|\sum_{i=1}^n x_i\Big|-L\sqrt{n}> \gamma\sqrt{n}.$$
Thus, everywhere on $\Event$, $\big\|(B_n^1(p)-p\,1_{n-1}1_n^\top+\widetilde s\,1_{n-1}1_n^\top)
x\big\|_2\geq \gamma\sqrt{n}$ for all $x\in S_\ell$ with $\ell\geq \frac{8(L+\gamma)}{\gamma}$
or $\ell\leq -\frac{8(L+\gamma)}{\gamma}-1$.
Combining all the above estimates, we obtain for some universal constant $C>0$:
\begin{align*}
\Prob\big\{&\big\|(B_n^1(p)-p\,1_{n-1}1_n^\top+\widetilde s\,1_{n-1}1_n^\top) x\big\|_2\leq
\mbox{$\frac{\gamma}{2}$}\sqrt{n}
\mbox{ for some }x\in
\comp_n(\delta,\nu)\big\}\\
&\leq \frac{C(L+\gamma)}{\gamma}{n\choose{\lfloor \delta n\rfloor}}\bigg(\frac{C'L}
{\gamma}\bigg)^{\lfloor \delta n\rfloor}
\bigg(\frac{e}{\widetilde \varepsilon}\bigg)^{\widetilde \varepsilon (n-1)}
(1-p)^{(n-1)(1-\widetilde \varepsilon)}+2^{-n}.
\end{align*}

It remains to note that by choosing $\widetilde\varepsilon=\widetilde\varepsilon(\varepsilon)$
sufficiently small, we can guarantee that the right hand side of the above inequality is less than
$$\frac{C(L+\gamma)}{\gamma}\bigg(1-p+\frac{\varepsilon}{2}\bigg)^{n-1}
+2^{-n}$$
for every $n\geq 2$.
Then the desired estimate will follow for all sufficiently large $n$ satisfying
$\frac{C(L+\gamma)}{\gamma}\big(1-p+\frac{\varepsilon}{2}\big)^{n-1}
+2^{-n}
\leq \big(1-p+\varepsilon\big)^{n}$.
\end{proof}

\section{Random averaging in $\ell_1(\Z)$}\label{s: averaging}

The main goal of this section is to provide upper bounds on the cardinalities of discretizations
of sets of vectors with a given threshold $\thres_p(\cdot,L)$,
discussed in the second part of Section~\ref{s: strategy}.
According to our ``inversion of randomness'', we consider a random vector uniformly distributed on a subset of the integer lattice $\Z^n$,
and want to show that with probability $1-e^{-\omega(n)}$ the scalar product of this vector with a vector
of independent Bernoulli($p$) variables has a small threshold value (with respect to the randomness of the Bernoulli vector).
First, we define the range of the random vector on the lattice.

Let $N,n\geq 1$ be some integers and let $\delta\in(0,1]$ and $K\geq 1$ be some real numbers.
We say that a subset $\mathcal A\subset\Z^n$ is {\it $(N,n,K,\delta)$--admissible} if
\begin{itemize}
\item $\mathcal A=A_1\times A_2\times\dots\times A_n$, where every $A_i$ ($i=1,2,\dots,n$)
is an origin-symmetric subset of $\Z$;
\item $A_i$ is an integer interval of cardinality at least $2N+1$ for every $i>\delta n$;
\item $A_i$ is a union of two integer intervals of total cardinality at least $2N$ and
$A_i\cap [-N,N]=\emptyset$ for all $i\leq \delta n$;
\item $|A_1|\cdot|A_2|\cdot\dots\cdot|A_n|\leq (KN)^n$;
\item $\max A_i < n\,N$ for all $1\leq i\leq n$.
\end{itemize}

\begin{Remark}
The condition $A_i\cap [-N,N]=\emptyset$ for $i\leq \delta n$, subject to appropriate rescaling,
is equivalent to the fact that the ``potential'' normal vectors we consider are $(\delta,\nu)$--incompressible,
hence at least $\lfloor \delta n\rfloor$ components of those vectors are separated from zero by $\nu/\sqrt{n}$.
\end{Remark}

Let $\mathcal A=A_1\times A_2\times\dots\times A_n\subset\Z^n$ be an $(N,n,K,\delta)$--admissible set,
and let $f(t)$ be any real valued function on $\Z$.
Fix any $p\in(0,1)$, and
assume that $X_1,X_2,\dots,X_n$ are independent integer random variables, where each $X_i$ is uniform in $A_i$.
For every $\ell\leq n$, we define a {\it random} function $f_{\mathcal A,p,\ell}$ by
\begin{equation}\label{eq: fApell def}
f_{\mathcal A,p,\ell}(t):=
\Exp_b\,f\Big(t+\sum_{j=1}^\ell b_jX_j\Big)=
\sum\limits_{(v_j)_{j=1}^\ell\in\{0,1\}^\ell}
p^{\sum_j v_j} (1-p)^{\ell-\sum_j v_j}f\big(t+v_1X_1+\dots+v_\ell X_\ell\big),
\end{equation}
$t\in\Z$,
where $\Exp_b$ denotes the expectation with respect to the randomness of the vector $b=(b_1,\dots,b_n)$
with independent Bernoulli($p$) components.
The central statement of the section is the following theorem.
\begin{theorem}\label{th: averaging}
For any $\delta\in(0,1]$, $p\in(0,1/2]$, $\varepsilon\in(0,p)$, $K,M\geq 1$ there are
$n_{\text{\tiny\ref{th: averaging}}}=n_{\text{\tiny\ref{th: averaging}}}(\delta,\varepsilon,p,K,M)\geq 1$,
$\eta_{\text{\tiny\ref{th: averaging}}}=\eta_{\text{\tiny\ref{th: averaging}}}(\delta,\varepsilon,p,K,M)\in (0,1]$ 
depending on $\delta,\varepsilon,p,K,M$ and $L_{\text{\tiny\ref{th: averaging}}}=
L_{\text{\tiny\ref{th: averaging}}}(\delta,\varepsilon,p,K)>0$ depending {\bf only} on $\delta,\varepsilon,p,K$
(and not on $M$)
with the following property.
Take $n\geq n_{\text{\tiny\ref{th: averaging}}}$, $1\leq N\leq (1-p+\varepsilon)^{-n}$,
let $\mathcal A$ be an $(N,n,K,\delta)$--admissible set
and $f(t)$ be a non-negative function in $\ell_1(\Z)$ with $\|f\|_1=1$ and such that $\log_2 f$
is $\eta_{\text{\tiny\ref{th: averaging}}}$--Lipschitz.
Then, with $f_{\mathcal A,p,n}$ defined above, we have
$$\Prob\big\{\|f_{\mathcal A,p,n}\|_\infty> L_{\text{\tiny\ref{th: averaging}}}
(N\sqrt{n})^{-1}\big\}
\leq \exp(-M\, n).$$
\end{theorem}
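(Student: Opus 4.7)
The plan is to reduce the problem to a Fourier estimate on $\Z$ and then lift an expectation bound to a superexponentially small tail via higher moments. The recursion $f_\ell = (1-p)\,f_{\ell-1} + p\,f_{\ell-1}(\cdot + X_\ell)$ yields the factorization
\[\widehat{f_n}(\theta) = \widehat f(\theta)\,\prod_{j=1}^{n}\bigl((1-p) + p\,e^{2\pi i\theta X_j}\bigr),\]
so by Plancherel,
\[\Exp\|f_n\|_2^2 = \int_{-1/2}^{1/2}|\widehat f(\theta)|^2\prod_{j=1}^n\bigl(1 - 2p(1-p)(1-\Exp_{X_j}\cos(2\pi\theta X_j))\bigr)\,d\theta.\]
The admissibility of each $A_j$ lets me evaluate $1-\Exp_{X_j}\cos(2\pi\theta X_j)$ via a Dirichlet-kernel computation: for $j>\delta n$, where $A_j$ is an integer interval of length at least $2N+1$, the standard sinc estimate gives $1-\Exp\cos\gtrsim\min(\theta^2 N^2,1)$; for $j\leq\delta n$, the disjointness $A_j\cap[-N,N]=\emptyset$ forces $\Exp X_j^2\geq N^2$ and the same lower bound holds via an oscillation estimate for $|\theta|\gtrsim 1/N$. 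Hence $\prod_j(\cdots)\leq \exp(-cn\min(\theta^2 N^2,1))$, and splitting the integral into $|\theta|\leq1/(N\sqrt n)$, $1/(N\sqrt n)\leq|\theta|\leq 1/N$, and $|\theta|\geq 1/N$ (using $N\leq(1-p+\varepsilon)^{-n}$ so that $e^{-cn}$ is dominated by $1/(N\sqrt n)$) yields $\Exp\|f_n\|_2^2\leq C/(N\sqrt n)$ with $C$ depending only on $\delta,\varepsilon,p,K$.

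To bridge from $\ell^2$ to $\ell^\infty$, I exploit the fact that the log-Lipschitz property of $\log_2 f$ is preserved under the recursion: if $f_{\ell-1}(t+1)/f_{\ell-1}(t)\in[2^{-\eta},2^\eta]$ for every $t$, the convex combination $f_\ell$ satisfies the same bound on a term-by-term basis. Hence $\log_2 f_n$ is $\eta$-Lipschitz, the set $\{t:f_n(t)\geq \|f_n\|_\infty/2\}$ contains an integer interval of length at least $1/\eta$, and so $\|f_n\|_2^2\geq\|f_n\|_\infty^2/(4\eta)$. Together with the previous step, $\Exp\|f_n\|_\infty^2\lesssim \eta/(N\sqrt n)$.

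To upgrade this to the required tail bound with $L_{\text{\tiny\ref{th: averaging}}}$ independent of $M$, my plan is to compute even moments $\Exp\|f_n\|_\infty^{2k}$ for $k$ proportional to $Mn$. Writing
\[\|f_n\|_\infty \leq \int_{-1/2}^{1/2}|\widehat f(\theta)|\,P(\theta,X)\,d\theta, \qquad P(\theta,X)=\prod_{j=1}^n\bigl|(1-p)+p\,e^{2\pi i\theta X_j}\bigr|,\]
expanding the $2k$-th power as a $2k$-fold integral over $(\theta_1,\ldots,\theta_{2k})$, bounding $\Exp_{X_j}\prod_i |(1-p)+p\,e^{2\pi i\theta_iX_j}|$ by $\sqrt{1-\max_i\Exp\psi(\theta_iX_j)}$ via Cauchy--Schwarz, and combining with the Abel-summation decay $|\widehat f(\theta)|\leq C\eta/|\theta|$ (another consequence of log-Lipschitz) should produce a bound of the form $\Exp\|f_n\|_\infty^{2k}\leq(C''/(N\sqrt n))^{2k}$ once $\eta$ is taken small enough in terms of $M$ and the structural parameters; Markov then gives the tail estimate. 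The main obstacle is executing this moment calculation sharply enough that $L_{\text{\tiny\ref{th: averaging}}}$ does not grow with $M$: a pointwise H\"older across the $\theta_i$'s loses a factor $k^{k/2}$, which would force $L\asymp\sqrt{Mn}$, so one must exploit correlations among $P(\theta_i,X)$ at nearby frequencies (where the characters are nearly identical), or trade off against the deterministic bound $\|f_n\|_\infty\leq 2\eta$. The admissibility condition $|A_1|\cdots|A_n|\leq(KN)^n$ together with $\max A_i<nN$ enter through lattice-point counting and the Lipschitz-in-$X_j$ controls needed when passing to a net in $\theta$.
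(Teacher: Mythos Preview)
Your Fourier set-up is clean and the identity for $\Exp\|f_{\mathcal A,p,n}\|_2^2$ is correct, but the proposal breaks precisely at the feature that makes the theorem nontrivial: obtaining $L_{\text{\tiny\ref{th: averaging}}}$ independent of $M$.

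First, the $\ell_2\to\ell_\infty$ bridge you write is quadratically too weak. From the log-Lipschitz property you correctly get $\|f_n\|_\infty^2\leq 4\eta\|f_n\|_2^2$, hence $\Exp\|f_n\|_\infty^2\lesssim\eta/(N\sqrt n)$. But Markov then gives
\[
\Prob\{\|f_n\|_\infty> L(N\sqrt n)^{-1}\}\;\lesssim\;\eta\,N\sqrt n\,/L^2,
\]
which is not even bounded by $1$ unless $L$ grows like $\sqrt{N}$; since $N$ may be of order $(1-p+\varepsilon)^{-n}$, this step yields nothing. The point is that $\eta$-log-Lipschitz only forbids spikes narrower than $1/\eta$; it says nothing about the global spread, so it cannot close the gap between $\|f_n\|_2\sim (N\sqrt n)^{-1/2}$ and the target $\|f_n\|_\infty\sim (N\sqrt n)^{-1}$.

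Second, your higher-moment plan does not rescue this. You already identify the obstruction: expanding $(\int|\widehat f|P)^{2k}$ and bounding $\Exp_{X_j}\prod_i|(1-p)+pe^{2\pi i\theta_iX_j}|$ via a single $\theta_{i^*}$ throws away all but one frequency per $X_j$, and the resulting Gaussian-type integral produces a $(k!)^{1/2}$ loss, forcing $L\asymp\sqrt{M}$ (in fact $\sqrt{Mn}$ in your parametrization $k\sim Mn$). The suggested fixes --- ``exploit correlations at nearby frequencies'' or ``trade off against $\|f_n\|_\infty\leq 2\eta$'' --- are not executed, and the deterministic bound $\|f_n\|_\infty\leq O(\eta)$ is exponentially weaker than the target $(N\sqrt n)^{-1}$, so cannot help. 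There is also a secondary issue in your $|\theta|\geq 1/N$ regime: the exponential rate you obtain there is of order $p(1-p)$, which for $p$ near $1/2$ is strictly smaller than $-\log(1-p+\varepsilon)$, so $e^{-cn}$ need not be dominated by $(N\sqrt n)^{-1}$ for the full range $N\leq (1-p+\varepsilon)^{-n}$.

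The paper's proof is structurally different and is designed exactly to decouple $L$ from $M$. It proceeds in two stages. First, a combinatorial ``descendant sequence'' argument (Proposition~\ref{p: rough decay}) yields a rough bound $\|f_{\mathcal A,p,\ell}\|_\infty\leq L'(M)(N\sqrt n)^{-1}$ with probability $1-e^{-2Mn}$, where $L'$ \emph{does} depend on $M$ and $\ell\approx\alpha n$. Second --- and this is the key idea you are missing --- the remaining $(1-\alpha)n$ steps are split into $q=q(M)$ blocks; on each block, Proposition~\ref{prop: refinement} shows that the $\ell_\infty$ norm contracts by a fixed factor $p/\sqrt 2+1-p<1$ with probability $1-e^{-2Mn}$, until it reaches the $M$-independent floor $16\widetilde R/(N\sqrt n)$ with $\widetilde R=\widetilde R(\delta,p)$. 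The $M$-dependence is absorbed into $q$, hence into $n_{\text{\tiny\ref{th: averaging}}}$ and $\eta_{\text{\tiny\ref{th: averaging}}}$, not into $L_{\text{\tiny\ref{th: averaging}}}$. The log-Lipschitz hypothesis is used in this second stage not to compare $\ell_\infty$ with $\ell_2$, but (via Proposition~\ref{prop: ell 2 update}) to guarantee that clusters of spikes are separated by valleys of width at least $\sim 1/\eta$, so that a random shift kills a definite fraction of spikes or produces a definite $\ell_2$ decrement with probability $1-O(\mu)$, where $\mu$ can be taken as small as needed in terms of $M$.
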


The crucial feature of the theorem and the most important technical element of this paper,
is that the bound $L_{\text{\tiny\ref{th: averaging}}}
(N\sqrt{n})^{-1}$ on the $\ell_\infty$--norm of the averaged function does not depend on the parameter $M$
which controls the probability estimate.
In other words, for a given choice of $\delta,\varepsilon,p,K$, which determine the value of
$L_{\text{\tiny\ref{th: averaging}}}$, the probability bound can be made superexponentially small in $n$.

It is not difficult to check that with the only assumption $\|f\|_1=1$ on the function $f$ the above statement is false.\label{example ind}
For example, take $f$ to be the indicator of $\{0\}$, 
assume that $\mathcal A=\{-2N,-2N+1,\dots,-N-1,N+1,\dots,2N\}^{\lfloor \delta n\rfloor}\times \{-N,-N+1,\dots,N\}^{n-\lfloor \delta n\rfloor}$.
It can be shown that for any natural $q<N$, on the one hand, the event $\Event_q:=\{X_i\in q\,\Z,\;i=1,2,\dots,n\}$
has probability at least $(2q)^{-n}$, and, on the other hand, everywhere on $\Event_q$ we have
$\|f_{\mathcal A,p,n}\|_\infty\geq c_p q\,(N\sqrt{n})^{-1}$, because $f_{\mathcal A,p,n}$ is supported on $q\,\Z$
and (by standard concentration results) has most of its mass located within a (random) integer interval of length $O_p(N\sqrt{n})$.
Thus, the probability cannot be made superexponentially small in $n$ without taking $q$, hence the lower bound
for $\|f_{\mathcal A,p,n}\|_\infty\cdot (N\sqrt{n})$, to infinity.
The condition that the logarithm of the function is $\eta_{\text{\tiny\ref{th: averaging}}}$--Lipschitz,
employed in the theorem, is designed to rule out such situations.

Before proving the theorem, we shall consider the corollary which was (in a somewhat different form)
stated in the introduction as Theorem~B
and which will be used in our net-argument in the next section:

\begin{cor}\label{cor: anticoncentration}
Let $\delta,\varepsilon\in(0,1]$, $p\in(0,1/2]$, $K,M\geq 1$.
There exist $n_{\text{\tiny\ref{cor: anticoncentration}}}=n_{\text{\tiny\ref{cor: anticoncentration}}}(\delta,\varepsilon,p,K,M)\geq 1$
depending on $\delta,\varepsilon,p,K,M$ and $L_{\text{\tiny\ref{cor: anticoncentration}}}=
L_{\text{\tiny\ref{cor: anticoncentration}}}(\delta,\varepsilon,p,K)>0$ depending only on $\delta,\varepsilon,p,K$
(and not on $M$)
with the following property. Take $n\geq n_{\text{\tiny\ref{cor: anticoncentration}}}$, $1\leq N\leq (1-p+\varepsilon)^{-n}$,
and let $\mathcal A$ be an $(N,n,K,\delta)$--admissible set.
Further, assume that $b_1,b_2,\dots,b_n$ are i.i.d Bernoulli($p$) random variables.
Then
$$\bigg|\bigg\{x\in\mathcal A:\;\cf\Big(\sum_{i=1}^n b_i x_i,\sqrt{n}\Big)\geq L_{\text{\tiny\ref{cor: anticoncentration}}} N^{-1}
\bigg\}\bigg|
\leq e^{-M\,n}\,|\mathcal A|.$$
\end{cor}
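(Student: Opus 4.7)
The plan is to reduce the counting statement to Theorem~\ref{th: averaging} by choosing a test function $f$ whose $\ell_\infty$--average is bounded below by a multiple of $\cf(\sum_i b_ix_i,\sqrt n)$, so that the superexponential bound on $\|f_{\mathcal A,p,n}\|_\infty$ provided by the theorem translates directly into the desired anticoncentration estimate.

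Fix $\alpha\in(0,1)$, say $\alpha=1/2$, and for each sufficiently large $n$ define
\[
f(t):=c_n\,\alpha^{|t|/\sqrt n},\qquad t\in\Z,
\]
where $c_n:=(1-\alpha^{1/\sqrt n})/(1+\alpha^{1/\sqrt n})$ is the normalization making $\|f\|_1=1$. A direct expansion yields $c_n\geq c'(\alpha)/\sqrt n$ for all large $n$, where $c'(\alpha)>0$ depends only on $\alpha$. Since $\log_2 f(t)=\log_2 c_n+(|t|/\sqrt n)\log_2\alpha$, one has $|\log_2 f(t+1)-\log_2 f(t)|\leq \log_2(1/\alpha)/\sqrt n$, so the Lipschitz hypothesis of Theorem~\ref{th: averaging} is met whenever $n\geq(\log_2(1/\alpha)/\eta_{\text{\tiny\ref{th: averaging}}})^2$, a constraint we absorb into the definition of $n_{\text{\tiny\ref{cor: anticoncentration}}}$.

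Next I would bound $\|f_{\mathcal A,p,n}\|_\infty$ from below in terms of $\cf(S,\sqrt n)$, where $S:=\sum_{j=1}^n b_jX_j$ and $X=(X_1,\dots,X_n)$ is uniform on $\mathcal A$. By the definition \eqref{eq: fApell def} and the elementary inequality $\alpha^{|s-t|/\sqrt n}\geq\alpha^2$ on $\{|s-t|\leq 2\sqrt n\}$, for every integer $t$
\[
f_{\mathcal A,p,n}(-t)=\Exp_b\,f(S-t)\geq c_n\alpha^2\,\Prob_b\{|S-t|\leq 2\sqrt n\}.
\]
Any $\lambda\in\R$ with $|S-\lambda|\leq\sqrt n$ satisfies $|S-\lfloor\lambda\rfloor|\leq\sqrt n+1\leq 2\sqrt n$ (for $n\geq 1$), so taking suprema over $t\in\Z$ gives $\|f_{\mathcal A,p,n}\|_\infty\geq c_n\alpha^2\cdot\cf(S,\sqrt n)$.

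Setting $L_{\text{\tiny\ref{cor: anticoncentration}}}:=L_{\text{\tiny\ref{th: averaging}}}/(c'(\alpha)\alpha^2)$, the event $\{\cf(S,\sqrt n)\geq L_{\text{\tiny\ref{cor: anticoncentration}}} N^{-1}\}$ is contained in $\{\|f_{\mathcal A,p,n}\|_\infty>L_{\text{\tiny\ref{th: averaging}}}(N\sqrt n)^{-1}\}$, whose probability under $X$ is at most $e^{-Mn}$ by Theorem~\ref{th: averaging}. Since $X$ is uniform on $\mathcal A$, this probability equals $|\mathcal A|^{-1}\bigl|\{x\in\mathcal A:\cf(\sum_i b_ix_i,\sqrt n)\geq L_{\text{\tiny\ref{cor: anticoncentration}}} N^{-1}\}\bigr|$, giving the cardinality bound. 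The corollary is thus essentially a direct translation of Theorem~\ref{th: averaging}; the only non-trivial choice is the test function $f$, which must simultaneously be strictly positive with $\eta$--Lipschitz $\log_2$ and dominate a constant multiple of $\mathbf 1_{[-\sqrt n,\sqrt n]}$, and the exponential ansatz above accommodates both requirements, so I do not foresee a serious obstacle.
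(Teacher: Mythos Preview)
Your proposal is correct and follows essentially the same route as the paper: with $\alpha=1/2$ your test function is exactly the paper's choice $f(t)=m_0^{-1}2^{-|t|/\sqrt n}$, and the passage from $\|f_{\mathcal A,p,n}\|_\infty$ to $\cf(\sum_i b_ix_i,\sqrt n)$ via the pointwise lower bound $f\geq \tfrac{c}{\sqrt n}\mathbf 1_{[-\sqrt n-1,\sqrt n+1]}$ is the same mechanism. The only cosmetic point is that your choice $L_{\text{\tiny\ref{cor: anticoncentration}}}=L_{\text{\tiny\ref{th: averaging}}}/(c'(\alpha)\alpha^2)$ yields $\|f_{\mathcal A,p,n}\|_\infty\geq L_{\text{\tiny\ref{th: averaging}}}(N\sqrt n)^{-1}$ rather than strict inequality; taking any slightly larger constant (or using $c_n>c'(\alpha)/\sqrt n$ for large $n$) resolves this.
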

\begin{proof}
Take $n\geq \max\big(n_{\text{\tiny\ref{th: averaging}}},1/\eta_{\text{\tiny\ref{th: averaging}}}^2\big)$,
and let $1\leq N\leq (1-p+\varepsilon)^{-n}$,
and $\mathcal A$ be an $(N,n,K,\delta)$--admissible set.
Define the function $f\in\ell_1(\Z)$ as
$$f(t):=\frac{1}{m_0}2^{-|t|/\sqrt{n}},\quad t\in\Z,$$
where $m_0=\sum_{t\in\Z}2^{-|t|/\sqrt{n}}$.
Obviously, $\|f\|_1=1$, and $\log_2 f$ is $n^{-1/2}$--Lipschitz, hence, by the assumptions on $n$,
$\log_2 f$ is $\eta_{\text{\tiny\ref{th: averaging}}}$--Lipschitz.

Applying Theorem~\ref{th: averaging} to $f$, we get
$$\Prob\big\{\|f_{\mathcal A,p,n}\|_\infty> L_{\text{\tiny\ref{th: averaging}}}
(N\sqrt{n})^{-1}\big\}
\leq \exp(-M\, n).$$
The definition of $f_{\mathcal A,p,n}$ allows to rewrite the above inequality as
\begin{align*}
\bigg|\bigg\{x\in\mathcal A:\; \sup\limits_{t\in\Z} 
\Exp_b\,f\Big(t+\sum_{j=1}^n b_jx_j\Big)
> L_{\text{\tiny\ref{th: averaging}}}
(N\sqrt{n})^{-1}\bigg\}\bigg|
\leq e^{-M\,n}\,|\mathcal A|.
\end{align*}
On the other hand, since
$$f(t)\geq \frac{c}{\sqrt{n}}{\bf 1}_{[-\sqrt{n}-1,\sqrt{n}+1]}(t),\quad t\in\Z,$$
for some universal constant $c>0$, the last relation implies
\begin{align*}
\bigg|\bigg\{x\in\mathcal A:\; \sup\limits_{t\in\Z} 
\Exp_b\,{\bf 1}_{[-\sqrt{n}-1,\sqrt{n}+1]}\Big(t+\sum_{j=1}^n b_jx_j\Big)
> \frac{L_{\text{\tiny\ref{th: averaging}}}}{cN}\bigg\}\bigg|
\leq e^{-M\,n}\,|\mathcal A|.
\end{align*}
For every $t$ and $x=(x_1,x_2,\dots,x_n)$, the expression
$$
\Exp_b\,{\bf 1}_{[-\sqrt{n}-1,\sqrt{n}+1]}\Big(t+\sum_{j=1}^n b_jx_j\Big)
$$
is the probability that the random sum $t+\sum_{j=1}^n b_j x_j$ falls into the interval $[-\sqrt{n}-1,\sqrt{n}+1]$.
Thus, together with elementary relation $\sup\limits_{t\in\Z}\Prob\{|t+Y|\leq H+1\}\geq \cf(Y,H)$, valid for any $H\geq 0$
and any random variable $Y$, the previous inequality gives
\begin{align*}
\bigg|\bigg\{x\in\mathcal A:\; \cf\big(b_1x_1+\dots+b_nx_n,\sqrt{n})
> \frac{L_{\text{\tiny\ref{th: averaging}}}}{cN}\bigg\}\bigg|
\leq e^{-M\,n}\,|\mathcal A|.
\end{align*}
The statement follows.
\end{proof}

\bigskip

In our proof of Theorem~\ref{th: averaging}, we will gradually improve delocalization estimates for the
functions $f_{\mathcal A,p,\ell}$.
Our first (simple) step --- Lemma~\ref{l: aux simple ac} --- is to obtain estimates
on the $\ell_1$--norm of the truncated function $f_{\mathcal A,p,\ell}\,{\bf 1}_I$ (with $\ell$ of order $n$)
for an arbitrary integer interval $I$ of length at most $N$. Upper bounds of the order $O_{p,\delta}(\|f\|_1\,/\sqrt{n})$
will follow from the L\'evy--Kolmogorov--Rogozin inequality stated in the preliminaries as Lemma~\ref{l: lkr}.
At the second step, Proposition~\ref{p: rough decay} below,
we prove a weaker version of Theorem~\ref{th: averaging} where the parameter $L$ is allowed to depend on $M$.
At the third step, we remove the dependence of $L$ on $M$
by using the Lipschitzness of $f$. A discussion of that part of the proof is given after Proposition~\ref{p: rough decay}.

\begin{lemma}\label{l: aux simple ac}
There is a universal constant $C_{\text{\tiny\ref{l: aux simple ac}}}>0$ with the following property.
Let $p\in(0,1)$, $\delta_0\in(0,1)$, let $f\in\ell_1(\Z)$ be a non-negative function with $\|f\|_1=1$, and let $\mathcal A$
be an $(N,n,K,\delta)$--admissible set for some parameters $N$, $\delta\in[\delta_0,1)$, $n\geq 1/\delta_0$ and $K$.
Further, let $\ell > \delta_0 n$. Then deterministically
$\sum\limits_{t\in I}f_{\mathcal A,p,\ell}(t)\leq \frac{C_{\text{\tiny\ref{l: aux simple ac}}}}{\sqrt{\delta_0 n\,\min(p,1-p)}}$
for any integer interval $I\subset\Z$ with $|I|\leq N$.
In turn, this implies
$$\sum\limits_{t\in J}f_{\mathcal A,p,\ell}(t)\leq \frac{2C_{\text{\tiny\ref{l: aux simple ac}}}|J|}{\sqrt{\delta_0 n\,\min(p,1-p)}N}$$
for any integer interval $J$ of cardinality {\it at least} $N$.
\end{lemma}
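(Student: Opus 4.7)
The plan is to reduce both claims to a single anticoncentration estimate for the random sum $S:=\sum_{j=1}^\ell b_j X_j$ (with the $X_j$'s treated as a fixed but arbitrary realization in $A_1\times\dots\times A_\ell$, which is what ``deterministically'' entails), and to obtain that estimate from the L\'evy--Kolmogorov--Rogozin inequality (Lemma~\ref{l: lkr}) using the separation built into admissibility.

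First I would fix $(X_1,\dots,X_\ell)\in A_1\times\dots\times A_\ell$ and swap summation with expectation in the definition \eqref{eq: fApell def}:
$$\sum_{t\in I}f_{\mathcal{A},p,\ell}(t)
=\Exp_b\sum_{t\in I}f(t+S)
=\sum_{u\in\Z}f(u)\,\Prob_b\big(S\in u-I\big).$$
For the first claim one may assume $|I|=N$, since enlarging $I$ only makes the left-hand side larger. Each set $u-I$ is then an integer interval of length $N$, so the probability above is at most $\cf(S,N/2)$, and because $\|f\|_1=1$ the entire sum is bounded by $\cf(S,N/2)$.

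The core of the argument is thus to show $\cf(S,N/2)\leq C/\sqrt{\delta_0 n\,\min(p,1-p)}$. The admissibility condition $A_j\cap[-N,N]=\emptyset$ for $j\leq \lfloor \delta n\rfloor$ forces the two support points $0$ and $X_j$ of $b_jX_j$ to be more than $N$ apart, so any interval of length $N$ contains at most one of them, giving $\cf(b_jX_j,N/2)\leq \max(p,1-p)$, i.e.\ $1-\cf(b_jX_j,N/2)\geq \min(p,1-p)$. Setting $q:=\min(\ell,\lfloor \delta n\rfloor)$, the hypotheses $\ell>\delta_0 n$, $\delta\geq\delta_0$ and $n\geq 1/\delta_0$ yield $q\geq \lfloor \delta_0 n\rfloor\geq \delta_0 n/2$. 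Applying Lemma~\ref{l: lkr} with $r=r_j=N/2$ for all $j\leq\ell$ and dropping the nonnegative contributions from indices $j>q$ in the denominator then gives
$$\cf(S,N/2)\leq \frac{C_{\text{\tiny\ref{l: lkr}}}\cdot N/2}{\sqrt{q\,\min(p,1-p)\,(N/2)^2}}\leq \frac{C}{\sqrt{\delta_0 n\,\min(p,1-p)}},$$
which finishes the first assertion. For the small range of $n$ in which the right-hand side would exceed $1$, the trivial bound $\sum_{t\in I}f_{\mathcal{A},p,\ell}(t)\leq \|f\|_1=1$ suffices after enlarging the universal constant.

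For the second assertion I would simply partition $J$ into at most $\lceil |J|/N\rceil\leq 2|J|/N$ integer intervals of cardinality $\leq N$ (possible since $|J|\geq N$), apply the first bound to each piece, and sum. There is no real obstacle here: the only delicate point is verifying that enough admissibility-separated indices survive in the truncated sum $\sum_{j=1}^\ell$, which is guaranteed by $\ell>\delta_0 n$ combined with the convention that the first $\lfloor \delta n\rfloor$ coordinates of $\mathcal{A}$ carry the separation.
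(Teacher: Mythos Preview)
Your proof is correct and follows essentially the same route as the paper: fix a realization of the $X_j$'s, interchange sums to reduce $\sum_{t\in I}f_{\mathcal A,p,\ell}(t)$ to $\sum_u f(u)\,\Prob_b(S\in u-I)$, bound the probability uniformly by the L\'evy concentration function of $S$, and then apply the L\'evy--Kolmogorov--Rogozin inequality using that $|X_j|>N$ for the first $\lfloor\delta n\rfloor\geq\lfloor\delta_0 n\rfloor$ indices (all of which lie below $\ell$ since $\ell>\delta_0 n$). The paper's argument is written slightly differently---expanding the sum over $\{0,1\}^\ell$ explicitly and rewriting it as $\big\|f\cdot\sum_{(v_i)}p^{\sum v_i}(1-p)^{\ell-\sum v_i}{\bf 1}_{I+v_1X_1+\dots+v_\ell X_\ell}\big\|_1$---but this is the same identity you obtain, and both finish with the same application of Lemma~\ref{l: lkr}.
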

\begin{proof}
Let $X_1,\dots,X_\ell$ be the random variables from \eqref{eq: fApell def}.
Fix any realization of $X_1,\dots,X_\ell$ (so that $|X_i|> N$ for all $i\leq \delta_0 n$, by the definition of
an admissible set and since $\delta\geq \delta_0$),
and any integer interval $I$ of cardinality at most $N$.
Since
$$f_{\mathcal A,p,\ell}(t)=\sum\limits_{(v_i)_{i=1}^\ell\in\{0,1\}^\ell}
p^{\sum_i v_i}(1-p)^{\ell-\sum_i v_i}
f\big(t+v_1X_1+\dots+v_\ell X_\ell\big),
$$
we obtain
\begin{align*}
\sum\limits_{t\in I}f_{\mathcal A,p,\ell}(t)&=
\sum\limits_{(v_i)_{i=1}^\ell\in\{0,1\}^\ell}\;
\sum\limits_{t\in I}p^{\sum_i v_i}(1-p)^{\ell-\sum_i v_i} f\big(t+v_1X_1+\dots+v_\ell X_\ell\big)\\
&=\sum\limits_{(v_i)_{i=1}^\ell\in\{0,1\}^\ell}\;
\sum\limits_{t\in \Z}p^{\sum_i v_i}(1-p)^{\ell-\sum_i v_i} f(t){\bf 1}_{I+v_1X_1+\dots+v_\ell X_\ell}(t)\\
&=\Big\|f\,\sum\limits_{(v_i)_{i=1}^\ell\in\{0,1\}^\ell}p^{\sum_i v_i}(1-p)^{\ell-\sum_i v_i} {\bf 1}_{I+v_1X_1+\dots+v_\ell X_\ell}\Big\|_{1}.
\end{align*}
For any $t\in\Z$,
$$\sum\limits_{(v_i)_{i=1}^\ell\in\{0,1\}^\ell}
p^{\sum_i v_i}(1-p)^{\ell-\sum_i v_i}
{\bf 1}_{I+v_1X_1+\dots+v_\ell X_\ell}(t)
=\Prob\big\{b_1X_1+\dots+b_\ell X_\ell\in t-I \vert X_1,\dots,X_\ell\big\}.$$
where $b_1,\dots,b_\ell$ are Bernoulli($p$) random variables jointly independent with $X_1,\dots,X_\ell$.
It remains to note that the L\'evy--Kolmogorov--Rogozin inequality (Lemma~\ref{l: lkr}), together with the condition
$|X_i|> N$ for all $i\leq \delta_0 n$, implies that
for every $t\in\Z$,
$$
\Prob\big\{b_1X_1+\dots+b_\ell X_\ell\in t-I \;\vert\;X_1,\dots,X_\ell\big\}
\leq \frac{C}{\sqrt{\delta_0 n\,\min(p,1-p)}},
$$
for some universal constant $C>0$.
The result follows.
\end{proof}

\begin{prop}\label{p: rough decay}
For any $M>0$, $p\in(0,1/2]$, $\delta\in(0,1)$ and $\varepsilon\in(0,p)$ there are $L_{\text{\tiny\ref{p: rough decay}}}
=L_{\text{\tiny\ref{p: rough decay}}}(M,p,\delta,\varepsilon)>0$
and $n_{\text{\tiny\ref{p: rough decay}}}=n_{\text{\tiny\ref{p: rough decay}}}(M,p,\delta,\varepsilon)\in\N$
(depending on $M$, $p$, $\delta$
and $\varepsilon$) with the following property.
Let $f\in\ell_1(\Z)$ be a non-negative function with $\|f\|_1=1$, let $n\geq n_{\text{\tiny\ref{p: rough decay}}}$,
$n/2\leq \ell\leq n$,
and let
$\mathcal A$ be an $(N,n,K,\delta)$--admissible set for some parameters $N\leq 2^n$
and $K>0$.
Then
$$\Prob\big\{\|f_{\mathcal A,p,\ell}\|_\infty> 
\max\big(L_{\text{\tiny\ref{p: rough decay}}}(N\sqrt{n})^{-1},  (1-p+\varepsilon)^{\ell}\,\|f\|_\infty\big)\big\}
\leq \exp(-M n),$$
where $f_{\mathcal A,p,\ell}$ is defined by \eqref{eq: fApell def}.
\end{prop}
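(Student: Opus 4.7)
The plan is to iterate the one-step recursion $g_k(t) = (1-p) g_{k-1}(t) + p g_{k-1}(t - X_k)$ and argue that at each step, either $\|g_k\|_\infty$ is already at or below the floor $L/(N\sqrt n)$, or it contracts multiplicatively: $\|g_k\|_\infty \le (1-p+\varepsilon) \|g_{k-1}\|_\infty$ with high probability over the fresh variable $X_k$. Writing $h := g_{k-1}$ and $m := \|h\|_\infty$, the key algebraic observation is that this contraction is guaranteed as soon as $X_k$ avoids the Minkowski difference $S_A - S_B$, where
$$ S_A := \{t \in \Z : h(t) > (1-\eta) m\}, \qquad S_B := \{t \in \Z : h(t) > \varepsilon m/p\}, $$
with $\eta := (p-\varepsilon)/(1-p)$. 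Indeed, for any candidate maximizer $t^*$, if $t^* \notin S_A$ then $(1-p) h(t^*) + p h(t^* - X_k) \le (1-p)(1-\eta) m + p m = (1-p+\varepsilon) m$; whereas if $t^* \in S_A$ but $X_k \notin S_A - S_B$, then $t^* - X_k \notin S_B$ and $(1-p) h(t^*) + p h(t^* - X_k) \le (1-p) m + p \cdot \varepsilon m/p = (1-p+\varepsilon) m$.

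To bound the conditional probability $\Prob(X_k \in S_A - S_B \mid g_{k-1})$, I would combine the global bounds $|S_A| \le 2/m$ and $|S_B| \le p/(\varepsilon m)$ coming from $\|h\|_1 = 1$ with the truncated $\ell_1$ estimate of Lemma~\ref{l: aux simple ac}: on every integer interval $I$ of length at most $N$, $\sum_{t \in I} h(t) = O(1/\sqrt n)$, which gives $|S_A \cap I| = O(1/(m\sqrt n))$ and $|S_B \cap I| = O(p/(\varepsilon m \sqrt n))$. Counting the pairs $(a,b) \in S_A \times S_B$ with $a - b \in A_k$ via these per-interval bounds (using $A_k \subseteq $ an interval of length $\le KN$, hence covered by $O(K)$ intervals of length $N$) yields an estimate for $|(S_A - S_B) \cap A_k|$, and dividing by $|A_k| \ge 2N$ produces a per-step failure probability $\tau(L)$.

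Because only a constant per-step failure probability (depending on $M,p,\varepsilon$) is required, I would then allow a small number $\zeta \ell$ of failed steps, where $\zeta = \zeta(p,\varepsilon)$ is chosen so that $(1-p+\varepsilon')^{1-\zeta} \le 1-p+\varepsilon$ for some $\varepsilon' \in (0,\varepsilon)$. A Chernoff-type bound on the $\ell$ conditionally independent failure indicators gives $\Prob(\text{more than } \zeta\ell \text{ bad steps}) \le (e\tau(L)/\zeta)^{\zeta \ell}$, which is at most $e^{-Mn}$ once $\tau(L)$ is made sufficiently small. On the complementary event, at least $(1-\zeta)\ell$ successful contractions yield $\|g_\ell\|_\infty \le (1-p+\varepsilon')^{(1-\zeta)\ell}\|f\|_\infty \le (1-p+\varepsilon)^\ell \|f\|_\infty$, unless the floor is reached sooner (in which case monotonicity of $\|g_k\|_\infty$ along the iteration keeps us below $L/(N\sqrt n)$).

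The hard part will be establishing that $\tau(L)$ can be made arbitrarily small by choosing $L$ large, \emph{uniformly in $n$, $N$, and $K$}. A naive counting estimate of $|(S_A - S_B) \cap A_k|$ produces an extra factor of $N\sqrt n$, which would force $L$ to depend on $n$. Removing this factor should use the separation condition $A_i \cap [-N,N]=\emptyset$ for $i \le \lfloor \delta n \rfloor$: at least $\lfloor \delta n \rfloor - 1$ of the variables used to form $g_{k-1}$ satisfy $|X_j|>N$, so $\sum_{j<k} b_j X_j$ has standard deviation $\gtrsim N\sqrt n$, and the truncated $\ell_1$ bound on $h$ inherits the corresponding spread. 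Exploiting this interplay between the admissible structure and Lemma~\ref{l: aux simple ac} to cancel the $N\sqrt n$ factor in the per-step estimate is, I expect, the most delicate ingredient of the argument.
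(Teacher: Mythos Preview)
Your overall architecture---iterate the recursion, establish a per-step contraction with controllable failure probability $\tau(L)$, then Chernoff over the $\ell$ steps---is natural, but the step you flag as ``the most delicate ingredient'' is in fact a genuine obstruction, not just a technicality. The separation condition $A_i\cap[-N,N]=\emptyset$ for $i\le\lfloor\delta n\rfloor$ is \emph{already} the input to Lemma~\ref{l: aux simple ac}; invoking it again does not produce any new information beyond the truncated $\ell_1$ bound $\sum_{t\in I}h(t)=O(1/\sqrt n)$ that you have already used. With only that bound available, one can build realizations $h=g_{k-1}$ (e.g.\ $h=m\,\mathbf 1_{S_A}$ with $S_A$ a ``random-looking'' subset of $[0,N\sqrt n]$ of per-interval density $\sim 1/L$) for which the difference set $S_A-S_B$ covers essentially all of $A_k$, so that $\Prob(X_k\in S_A-S_B\mid g_{k-1})$ stays bounded away from zero no matter how large $L$ is. Your naive $O(N\sqrt n/L^2)$ bound is not merely pessimistic: a uniform-in-$t$ contraction of $\|g_k\|_\infty$ genuinely cannot be obtained with failure probability $o_L(1)$ from the available hypotheses.

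The paper circumvents this by \emph{not} attempting to control $\|g_k\|_\infty$ stepwise. Instead it introduces the notion of a \emph{descendant sequence}: starting from a single terminal point $t_\ell$ with $f_{\mathcal A,p,\ell}(t_\ell)$ large, one traces backwards along a fixed binary word $(v_i)\in\{0,1\}^\ell$ to get points $t_{i-1}=t_i+v_iX_i$, and shows (Lemma~\ref{l: aux 09019847294}) that largeness of $\|f_{\mathcal A,p,\ell}\|_\infty$ forces a proportional number of these specific points $t_{i-1}$ to fail to ``decay'' at time $i$. For a \emph{fixed} $t_{i-1}$, the non-decay probability is $O(1/R)$ by Markov and Lemma~\ref{l: aux simple ac}---this is the pointwise analogue of your estimate and it \emph{does} go to zero with $L$. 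The union bound is then taken at the very end over the $\le 4N^2n^{5/2}$ possible starting points and the $2^\ell$ binary words, a factor which is only $\exp(O(n))$ (crucially using $N\le 2^n$) and is absorbed by $(O(1/R))^{\eta n/2}$. The essential idea you are missing is this exchange: replace the per-step supremum over $t$ by a final union bound over \emph{paths}, which costs only $2^\ell\cdot\mathrm{poly}(N,n)$ rather than $|S_A|^\ell$.
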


The crucial difference between the above statement and Theorem~\ref{th: averaging}
is that $L_{\text{\tiny\ref{p: rough decay}}}$ in the proposition
is allowed to depend on $M$.
The proof essentially follows by estimating probabilities that $
f_{\mathcal A,p,\ell}(t)> 
\max\big(L_{\text{\tiny\ref{p: rough decay}}}(N\sqrt{n})^{-1},  (1-p+\varepsilon)^{\ell}\,\|f\|_\infty\big)$
for a fixed $t\in\Z$ and taking the union bound over $t$,
although the actual argument is more involved.
We will need the following definitions.

Let $R>0$ be a parameter, let
$N$, $\mathcal A$, $f$, $\ell$ and $p$ be as in the above proposition, and let $m\in\{1,2,\dots,\ell\}$.
We say that a point $t\in\Z$ {\it decays at time $m$} if
$$f_{\mathcal A,p,m-1}(t+X_m)\leq\frac{R}{N\sqrt{n}}
\quad\mbox{ and }\quad f_{\mathcal A,p,m-1}(t-X_m)\leq\frac{R}{N\sqrt{n}}.$$
Further, given any $t\in\Z$ and a sequence
$(v_i)_{i=1}^\ell\in\{0,1\}^\ell$, {\it the descendant sequence} for $t$
with respect to $(v_i)_{i=1}^\ell$ is a random sequence $(t_i)_{i=0}^\ell$,
where $t_i=t-\sum_{j=1}^i v_j X_j$, $1\leq i\leq \ell$ (and where we set $t_0:=t$).
The connection of the above statement with these definitions
is provided by the following fact: the event that the $\ell_\infty$--norm of $f_{\mathcal A,p,\ell}$
is ``large'' is contained within the event that there exists a descendant sequence such that a proportional
number of its elements do not decay.
More precisely, we have
\begin{lemma}\label{l: aux 09019847294}
Let $N$, $\mathcal A$, $f$, $\ell$, $\varepsilon$ and $p$ be as in Proposition~\ref{p: rough decay},
let $L>0$, and set $R:=\frac{\varepsilon L}{2p}$.
Define event $\Event$ as the subset the probability space such that
there exists a sequence
$(v_i)_{i=1}^\ell\in\{0,1\}^\ell$ and a point
$t\in\Z$ so that the descendant sequence $(t_i)_{i=0}^\ell$ for $t$ with respect to $(v_i)_{i=1}^\ell$
satisfies
\begin{equation}\label{eq: aux 4733308t3}
\big|\big\{1\leq i\leq \ell:\;t_{i-1}\mbox{ does not decay at time $i$}\big\}\big|\geq 
-\frac{n\,\log\big((1-p+\varepsilon)/(1-p+\varepsilon/2)\big)}{2\log\big(1-p+\varepsilon/2\big)}.
\end{equation}
Then $\Event\supset \big\{\|f_{\mathcal A,p,\ell}\|_\infty>
\max\big(L(N\sqrt{n})^{-1}, (1-p+\varepsilon)^{\ell}\,\|f\|_\infty\big)\big\}$.
\end{lemma}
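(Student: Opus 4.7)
The plan is a contradiction argument driven by the recursive identity
\[
f_{\mathcal A,p,m}(u)=(1-p)\,f_{\mathcal A,p,m-1}(u)+p\,f_{\mathcal A,p,m-1}(u+X_m),
\]
which expresses each level-$m$ function as a convex combination of two shifts of the level-$(m-1)$ function. Assume $\|f_{\mathcal A,p,\ell}\|_\infty > L_0:=\max\big(L/(N\sqrt{n}),(1-p+\varepsilon)^\ell\|f\|_\infty\big)$ and choose $s\in\Z$ with $f_{\mathcal A,p,\ell}(s)>L_0$. My goal is to produce $t\in\Z$ and $(v_i)_{i=1}^\ell\in\{0,1\}^\ell$ whose descendant sequence contains at least the required number of non-decaying steps.

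The quantitative heart of the argument is an elementary consequence of the recursion: whenever $f_{\mathcal A,p,m}(u)\geq L/(N\sqrt{n})$ and $f_{\mathcal A,p,m-1}(u+X_m)\leq R/(N\sqrt{n})$, the substitution $R=\varepsilon L/(2p)$ together with the calculation $(1-\varepsilon/2)(1-p+\varepsilon/2)\geq 1-p$ (valid since $\varepsilon\leq 2p$) yields the sharp factor improvement
\[
f_{\mathcal A,p,m-1}(u)\geq\frac{f_{\mathcal A,p,m}(u)}{1-p+\varepsilon/2}.
\]
I run a greedy backward path $s_\ell:=s$, inductively selecting $s_{m-1}\in\{s_m,s_m+X_m\}$ so that $f_{\mathcal A,p,m-1}(s_{m-1})$ is the larger of the two candidate values; convexity guarantees $f_{\mathcal A,p,m-1}(s_{m-1})\geq f_{\mathcal A,p,m}(s_m)$, and the sharper bound applies whenever $s_m$ decays at time $m$ in the lemma's sense, since decay forces the $+X_m$-side to be small. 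Letting $K^\star$ denote the number of greedy steps at which $s_m$ does \emph{not} decay at time $m$ and telescoping the factor improvements over the $\ell-K^\star$ remaining steps gives
\[
\|f\|_\infty\geq f(s_0)\geq L_0\,(1-p+\varepsilon/2)^{-(\ell-K^\star)}.
\]
Combining with $L_0\geq(1-p+\varepsilon)^\ell\|f\|_\infty$, dividing, taking logarithms (both of which are negative, so the inequality flips), and invoking $\ell\geq n/2$ produces $K^\star>-n\log((1-p+\varepsilon)/(1-p+\varepsilon/2))/(2\log(1-p+\varepsilon/2))$, matching the threshold in the statement.

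It remains to realize the greedy path as a descendant sequence. Setting $t:=s_0$ and $v_i:=w_i\in\{0,1\}$, where $w_m$ records the greedy choice via $s_{m-1}=s_m+w_m X_m$, a direct telescoping verifies $t_i=s_i$ for every $i$, so the descendant literally retraces the greedy path. The main obstacle is translating each non-decay event of the greedy (a statement about $s_m\pm X_m$) into a non-decay event of the descendant (a statement about $s_{i-1}\pm X_i$). When $w_i=0$, $s_{i-1}=s_i$ and the two conditions coincide, so the correspondence is immediate; but when $w_i=1$, $s_{i-1}=s_i+X_i$ and the descendant instead probes $f_{\mathcal A,p,i-1}$ at $s_i$ and $s_i+2X_i$. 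I expect the resolution to require refining the greedy's tie-breaking rule—biasing toward ``stay'' transitions whenever this is consistent with preserving the factor-improvement estimate—together with exploiting the origin-symmetry of each $A_i$, which is precisely what motivates the symmetric $\pm X_m$ form of the decay condition and allows the $-X_i$-neighbor of $t_{i-1}$ to play the role of a non-decay witness in the move case.
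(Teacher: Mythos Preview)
Your greedy backward construction and telescoping are exactly the paper's approach, and your factor improvement $f_{\mathcal A,p,m-1}(u)\geq f_{\mathcal A,p,m}(u)/(1-p+\varepsilon/2)$ is correct. The gap is precisely where you identify it, but your proposed resolution is on the wrong track.

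The problem is that you count non-decays of $s_m$ at time $m$, whereas the lemma counts non-decays of $t_{i-1}=s_{i-1}$ at time $i$. Your own argument shows that whenever $s_m$ decays at time $m$, the greedy necessarily chooses $w_m=0$, so $s_{m-1}=s_m$ and hence $s_{m-1}$ also decays at time $m$. Thus $\{m:s_m\text{ decays}\}\subseteq\{m:s_{m-1}\text{ decays}\}$, which gives the lemma's count $\leq K^\star$ --- the wrong inequality. No tie-breaking rule or appeal to the symmetry of $A_i$ repairs this.

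The fix is simply to count decays at $t_{i-1}$ from the outset, as the paper does. Rewrite the recursion at $t_i$ in terms of $t_{i-1}=t_i+v_iX_i$:
\[
f_{\mathcal A,p,i}(t_i)=(1-p)\,f_{\mathcal A,p,i-1}(t_{i-1}-v_iX_i)+p\,f_{\mathcal A,p,i-1}\bigl(t_{i-1}+(1-v_i)X_i\bigr).
\]
One of the two arguments is always $t_{i-1}$ itself, and the other is $t_{i-1}+X_i$ (if $v_i=0$) or $t_{i-1}-X_i$ (if $v_i=1$). The decay condition at $t_{i-1}$ is stated two-sidedly precisely so that, \emph{whichever} $v_i$ the greedy selected, the non-$t_{i-1}$ term is bounded by $R/(N\sqrt{n})$. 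This yields $f_{\mathcal A,p,i}(t_i)\leq(1-p+\varepsilon/2)\,f_{\mathcal A,p,i-1}(t_{i-1})$ directly (in the case $v_i=1$ one gets the even smaller factor $(1-p)\varepsilon/(2p)+p$, which is $\leq 1-p+\varepsilon/2$ since $\varepsilon<p$). Telescoping over the decay steps of $t_{i-1}$ then gives the required lower bound on the lemma's count without any translation step.
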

\begin{proof}
Fix a realization of $X_1,\dots,X_\ell$ such that
$$\|f_{\mathcal A,p,\ell}\|_\infty>
\max\big(L(N\sqrt{n})^{-1}, (1-p+\varepsilon)^{\ell}\,\|f\|_\infty\big)$$
(if such a realization does not exist then there is nothing to prove).
We will construct a sequence of integers $(t_i)_{i=0}^\ell$ inductively
in inverse order as follows. Take $t_\ell$ to be any integer such that
$f_{\mathcal A,p,\ell}(t_\ell)>
\max\big(L(N\sqrt{n})^{-1}, (1-p+\varepsilon)^{\ell}\,\|f\|_\infty\big)$.
At $(\ell-i+1)$--st step ($1\leq i\leq \ell$) we assume that $t_i$
has been defined, and satisfies
$f_{\mathcal A,p,\ell}(t_i)>
\max\big(L(N\sqrt{n})^{-1}, (1-p+\varepsilon)^{\ell}\,\|f\|_\infty\big)$. 
In view of the relation
\begin{equation}\label{eq: aux averaging}
f_{\mathcal A,p,i}(t):=(1-p)\, f_{\mathcal A,p,i-1}(t)+p\, f_{\mathcal A,p,i-1}(t+X_i),\quad
t\in\Z,
\end{equation}
which follows immediately from the definition of $f_{\mathcal A,p,i}$, we get that
$f_{\mathcal A,p,i-1}(t_i+v_iX_i)\geq 
f_{\mathcal A,p,i}(t_i)$ for some $v_i\in\{0,1\}$. Then we set $t_{i-1}:=t_i+v_i X_i$.

Clearly, the sequence $(t_i)_{i=0}^\ell$ constructed this way, is the descendant sequence
for $t_0$ with respect to $(v_i)_{i=1}^\ell$, which satisfies the conditions
\begin{itemize}
\item[(a)] $f_{\mathcal A,p,i-1}(t_{i-1})\geq f_{\mathcal A,p,i}(t_i)$ for all $1\leq i\leq \ell$;
\item[(b)] $f_{\mathcal A,p,\ell}(t_\ell)>
\max\big(L(N\sqrt{n})^{-1}, (1-p+\varepsilon)^{\ell}\,\|f\|_\infty\big)$.
\end{itemize}
We will show that these conditions imply \eqref{eq: aux 4733308t3}.
Assume that $1\leq i\leq\ell$ is such that $t_{i-1}$
decays at time $i$.
According to \eqref{eq: aux averaging} and the relation between $t_i$ and $t_{i-1}$, we have
\begin{align*}
f_{\mathcal A,p,i}(t_i)&=(1-p) f_{\mathcal A,p,i-1}(t_i)+p\,f_{\mathcal A,p,i-1}(t_i+X_i)\\
&=(1-p)f_{\mathcal A,p,i-1}(t_{i-1}-v_iX_i)+p\,f_{\mathcal A,p,i-1}(t_{i-1}+(1-v_i)X_i).
\end{align*}
By our definition of decay at time $i$,
both $f_{\mathcal A,p,i-1}(t_{i-1}+X_i)$ and $f_{\mathcal A,p,i-1}(t_{i-1}-X_i)$
are less than $\frac{R}{N\sqrt{n}}$, hence less than $\frac{\varepsilon}{2p}\,f_{\mathcal A,p,i-1}(t_{i-1})$,
by the relation between $L$ and $R$ and conditions (a), (b).
Thus, one of the values $f_{\mathcal A,p,i-1}(t_{i-1}-v_iX_i)$ or
$f_{\mathcal A,p,i-1}(t_{i-1}+(1-v_i)X_i\big)$ is at most $\frac{\varepsilon}{2p}\,f_{\mathcal A,p,i-1}(t_{i-1})$
while the other is equal to $f_{\mathcal A,p,i-1}(t_{i-1})$.
This gives
$$f_{\mathcal A,p,i}(t_i)\leq \Big(\frac{\varepsilon}{2p}\cdot p+1-p\Big)\,f_{\mathcal A,p,i-1}(t_{i-1}).$$
Applying the last relation for all $i$ where there is a decay and using the monotonicity of
the sequence $\big(f_{\mathcal A,p,j}(t_j)\big)_{j=0}^\ell$, we get
for $u=|\{1\leq i\leq \ell:\;t_{i-1}\mbox{ decays at time $i$}\}|$:
$$(1-p+\varepsilon)^{\ell}\,\|f\|_\infty< f_{\mathcal A,p,\ell}(t_\ell)\leq (1-p+\varepsilon/2)^u\,\|f\|_\infty,$$
whence
$$(1-p+\varepsilon/2)^{\ell-u}<\big((1-p+\varepsilon/2)/(1-p+\varepsilon)\big)^{n/2}.$$
This implies the required lower bound for $\ell-u=|\{1\leq i\leq \ell:\;t_{i-1}\mbox{ does not decay at time $i$}\}|$.
\end{proof}

\begin{proof}[{Proof of Proposition~\ref{p: rough decay}}]
Let $L>0$ be a parameter to be chosen later.
Set
$$\eta:=\min\bigg(\delta
,-\frac{\log\big((1-p+\varepsilon)/(1-p+\varepsilon/2)\big)}{2\log\big(1-p+\varepsilon/2\big)}\bigg);
\quad R:=\frac{\varepsilon L}{2p}.$$
We will assume that $\eta n/2\geq 1$.
Let $X_1,X_2,\dots,X_\ell$ be independent random variables,
each $X_i$ uniform on $A_i$, where $\mathcal A=A_1\times A_2\times\dots\times A_n$.

The proposition follows by applying Lemma~\ref{l: aux 09019847294} and a union bound.
Observe that
for any point $t\in\Z$ such that the last element of
a descendant sequence $(t_i)_{i=0}^\ell$ (with respect to some sequence in $\{0,1\}^\ell$ and with $t_0=t$)
satisfies
$f_{\mathcal A,p,\ell}(t_\ell)>(N\sqrt{n})^{-1}$,
we have
$$t\in \big\{s\in\Z:\;f(s)> (N\sqrt{n})^{-1}\big\}
+(A_1\cup\{0\}+A_1\cup\{0\})+\dots+(A_\ell\cup\{0\}+A_\ell\cup\{0\}).$$
Indeed, the definition of the descendant sequence implies that for some $(\widetilde v_i)_{i=1}^\ell\in\{0,1\}^\ell$,
$$t=t_\ell+\widetilde v_1 X_1+\dots+\widetilde v_\ell X_\ell\in
t_\ell +A_1\cup\{0\}+\dots+ A_\ell\cup\{0\},$$
while at the same time
the condition $f_{\mathcal A,p,\ell}(t_\ell)>(N\sqrt{n})^{-1}$ and the definition of $f_{\mathcal A,p,\ell}$
implies that $f(t_\ell+x_1+x_2\dots+x_\ell)>(N\sqrt{n})^{-1}$ for some $x_i\in A_i\cup\{0\}$, $i=1,\dots,\ell$,
hence
\begin{align*}
&t_\ell\in \big\{s\in\Z:\;f(s)> (N\sqrt{n})^{-1}\big\}-A_1\cup\{0\}-\dots-A_\ell\cup\{0\}\\
&\hspace{3cm}=
\big\{s\in\Z:\;f(s)> (N\sqrt{n})^{-1}\big\}+A_1\cup\{0\}+\dots+A_\ell\cup\{0\}.
\end{align*}

Set
$$D:=\big\{s\in\Z:\;f(s)> (N\sqrt{n})^{-1}\big\}
+(A_1\cup\{0\}+A_1\cup\{0\})+\dots+(A_\ell\cup\{0\}+A_\ell\cup\{0\}),$$
and observe that, in view of the upper bound on $\max A_i$'s from the definition of an admissible set,
and the assumption $\|f\|_1=1$,
\begin{align*}
|D|\leq N\sqrt{n}\,\big|(A_1\cup\{0\}+A_1\cup\{0\})+\dots+(A_\ell\cup\{0\}+A_\ell\cup\{0\})
\big|\leq 4N\sqrt{n}\, \ell\,nN\leq 4N^2 n^{5/2}.
\end{align*}
Set $H:=\eta n$.
Then, with the event $\Event$ defined in Lemma~\ref{l: aux 09019847294}, we can write
\begin{align*}
\Prob(\Event)
&\leq 2^\ell|D|\sup\limits_{t\in D,\;(v_i)_{i=1}^\ell\in\{0,1\}^\ell}\Prob\big\{
\mbox{The descendant sequence $(t_i)_{i=0}^\ell$ for $t$ w.r.t $(v_i)_{i=1}^\ell$}\\
&\hspace{4.8cm}\mbox{satisfies $|\{1\leq i\leq \ell:\;t_{i-1}\mbox{ does not decay at $i$}\}|\geq H$}
\big\}\\
&\leq 2^{\ell+2}N^2 n^{5/2}{n\choose {\lceil H\rceil}}
\sup\limits_{\substack{I\subset[\ell],\;|I|=\lceil H\rceil\\ t\in D,
\;(v_i)_{i=1}^\ell\in\{0,1\}^\ell}}\Prob\big\{
\mbox{For descendant sequence $(t_i)_{i=0}^\ell$ w.r.t $(v_i)$,}\\
&\hspace{7cm}\mbox{$t_{i-1}$ does not decay for all $i\in I$}
\big\}.
\end{align*}
Finally, fix any $I\subset[\ell]$ with $|I|=\lceil H\rceil$,
$t\in D$ and $(v_i)_{i=1}^\ell\in\{0,1\}^\ell$.
Let $(t_i)_{i=0}^\ell$ be the (random) descendant sequence for $t$ with respect to $(v_i)$
(note that $t_i$ is measurable w.r.t.\ $X_1,\dots,X_i$).
Take any $i\in I$ with $i-1> H/2$. 
Conditioned on any realization of $X_1,\dots,X_{i-1}$,
the variable $t_{i-1}+X_i$ is uniform on $t_{i-1}+A_i$, and
\begin{align*}
\Exp\big(f_{\mathcal A,p,i-1}(t_{i-1}+X_i)\;\vert\;X_1,\dots,X_{i-1}\big)
&=\frac{1}{|A_i|}\sum\limits_{s\in t_{i-1}+A_i}f_{\mathcal A,p,i-1}(s)\\
&\leq \frac{4}{N}
\frac{C_{\text{\tiny\ref{l: aux simple ac}}}}{\sqrt{p\eta n/2}},
\end{align*}
where at the last step we applied Lemma~\ref{l: aux simple ac} with $\delta_0:=\eta/2$
and used that $A_i$ is either an integer interval or a union of two integer intervals.
The same estimate is valid for
$$\Exp\big(f_{\mathcal A,p,i-1}(t_{i-1}-X_i)\;\vert\;X_1,\dots,X_{i-1}\big).$$
Hence, by Markov's inequality,
\begin{align*}
\Prob&\big\{\mbox{$t_{i-1}$ does not decay at $i$}\;\vert\;X_1,\dots,X_{i-1}\big\}\\
&=
\Prob\Big\{
\mbox{$f_{\mathcal A,p,i-1}(t_{i-1}+X_i)>\frac{R}{N\sqrt{n}}$ or $f_{\mathcal A,p,i-1}(t_{i-1}-X_i)>\frac{R}{N\sqrt{n}}$}
\;\Big\vert\;X_1,\dots,X_{i-1}\Big\}\\
&\leq \frac{8}{N} 
\frac{C_{\text{\tiny\ref{l: aux simple ac}}}}{\sqrt{p\eta n/2}} \frac{N\sqrt{n}}{R}
= \frac{8C_{\text{\tiny\ref{l: aux simple ac}}}}{\sqrt{p\eta/2}R}.
\end{align*}
Applying this estimate for all $i\in I\setminus [1,H/2+1]$,
we obtain
\begin{align*}
\Prob\big\{
\mbox{For desc.\ sequence $(t_i)_{i=0}^\ell$, $t_{i-1}$ doesn't decay at $i$ for all $i\in I$}
\big\}
\leq \bigg(\frac{8C_{\text{\tiny\ref{l: aux simple ac}}}}{\sqrt{p\eta/2}R}\bigg)^{\lceil H\rceil-H/2-2},
\end{align*}
whence
$$
\Prob(\Event)
\leq 2^{\ell+2}N^2 n^{5/2}{n\choose {\lceil H\rceil}}
\bigg(\frac{16C_{\text{\tiny\ref{l: aux simple ac}}}p}{\sqrt{p\eta/2}\,\varepsilon L}\bigg)^{\lceil H\rceil-H/2-2},
$$
where, we recall, $H=\eta n$.
Finally, we observe that by choosing $L=L(M,p,\delta,\varepsilon)$ large enough, we can make the last expression less than $\exp(-M n)$
for all sufficiently large $n$.
This completes the proof of the proposition.
\end{proof}

The above result is too weak to be useful for our purposes.
The rest of the section is devoted to ``refining'' the proposition
by removing the dependence on $M$ from the lower bound on the $\ell_\infty$--norm
of the averaged function.

Let us informally describe the idea behind the argument and provide some simple examples.
The magnitude of the $\ell_\infty$--norm of $f_{\mathcal A,p,n}$ essentially depends on how
efficient in removing spikes is the averaging step given by the relation
$f_{\mathcal A,p,i}(t)=(1-p)\, f_{\mathcal A,p,i-1}(t)+p\, f_{\mathcal A,p,i-1}(t+X_i)$. 
One may hope that if at every step $i$, the number of spikes (coordinates with large magnitudes) is decreased significantly
with a probability close to one then the resulting function $f_{\mathcal A,p, n}$ would have a small 
$\ell_\infty$--norm with a very large probability (superexponentially close to one).

For a moment, it will be convenient to drop the assumption of a bounded $\ell_1$--norm.
Consider a family of functions $g_{N,d,I,\eta}$ on $\Z$, indexed by natural numbers $N,d$,
an integer interval $I$, and $\eta>0$, and defined as
\begin{equation*}\label{pge: desc 1}
g_{N,d,I,\eta}(t):=\exp\big(-\eta\,\dist(t,I+d\,\Z)\big),\quad t\in\Z,
\end{equation*}
where we impose the following restrictions on parameters:
\begin{itemize}
\item $N\geq d$;
\item The function $g_{N,d,I,\eta}$ is ``essentially non-constant'' in the sense that $\|g_{N,d,I,\eta}{\bf 1}_{J}\|_1
\leq \frac{1}{2}|J|$ for any integer interval $J$ of length at least $N$.
\end{itemize}
Note that $\log g_{N,d,I,\eta}$ is $\eta$--Lipschitz and that the second assumption implies $|I|\leq d/2$.
Assume that a random variable $X$ is uniformly distributed on $\{0,1,\dots,N\}$,
and define the random average
$$
g^{av}_{N,d,I,\eta}(t):=\frac{1}{2}g_{N,d,I,\eta}(t)+\frac{1}{2}g_{N,d,I,\eta}(t+X),\quad t\in\Z.
$$
We are interested in estimating the proportion $\mathcal R_{N,d,I,\eta}$ of spikes preserved by the averaging; with
$$\mathcal R_{N,d,I,\eta}:=\lim\limits_{k\to\infty}\frac{|\{t\in\Z\cap [-k,k]:\;g^{av}_{N,d,I,\eta}(t)=1\}|}
{|\{t\in\Z\cap [-k,k]:\;g_{N,d,I,\eta}(t)=1\}|}.$$
A simple computation taking into account the condition $|I|\leq d/2$, gives
$$
\Prob\big\{1-\mathcal R_{N,d,I,\eta}\leq \varepsilon\big\}
= \Theta\Big(\frac{\varepsilon|I|}{d}+\frac{1}{d}\Big),\quad \varepsilon\in(0,1/2]
$$
and, for $\varepsilon=0$,
$$
\Prob\big\{1-\mathcal R_{N,d,I,\eta}=0\big\}=\Theta\Big(\frac{1}{d}\Big).
$$
Thus, the {\it efficiency} of the averaging, i.e.\ the small ball probability estimate for $1-\mathcal R_{N,d,I,\eta}$,
is influenced by the magnitude of $d$ or, equivalently, the length $d-|I|$ of the ``valleys'' separating
the clusters of spikes in $g_{N,d,I,\eta}$.
Now, let us discuss how this is related to the Lipschitzness of the logarithm.
It is not difficult to check that, in order to satisfy the condition of being ``essentially non-constant'',
we must choose $d$ at least of order $1/\eta$. Thus, the smaller $\eta$ is,
the wider the valleys between the clusters of spikes, and the stronger the small ball probability
estimates for $1-\mathcal R_{N,d,I,\eta}$ must be.
In a sense, the Lipschitzness of the logarithm of $g_{N,d,I,\eta}$, together with the essential non-constantness, affects
the averaging indirectly, by influencing the structure of spikes and valleys.

In our actual model, a similar phenomenon holds, although the argument is more complicated,
first, because the pattern of spikes does not have to be as regular as in the above example, second,
because the spikes are defined as points where the function exceeds a certain threshold
rather than points where it takes a specific value.
Our measurement of the efficiency of the averaging is more complicated compared to the above example.
For a function with relatively many spikes, we compare the $\ell_2$--norms of the original function and the average.
A crucial step towards proving Theorem~\ref{th: averaging} is the following proposition.

\begin{prop}\label{prop: ell 2 update}
Let $R>0$, $p\in(0,1)$, $\mu\in(0,1/64]$ and $N\in\N$.
Further, assume that
$g_1,g_2$ are non-negative functions in $\ell_1(\Z)$, and $g_1$ satisfies the following conditions:
\begin{itemize}
\item $\log_2 g_1$ is $\mu^4$--Lipschitz;
\item $\sum\limits_{t\in I}g_1(t)\leq RN$ for any integer interval $I$ of cardinality $N$;
\item There is interval $I_0\subset\Z$ with $|I_0|=N$, such that $|\{t\in I_0:\;g_1(t)\geq 8R\}|\geq \mu N$.
\end{itemize}
Let $Y$ be a random variable uniformly distributed on an integer interval $J$ of cardinality at least $N$.
Then
$$\Prob\Big\{\mbox{$\big\|(1-p)\,g_1(\cdot)+p\,g_2(\cdot+Y))\big\|_2^2\leq \big((1-p)\|g_1\|_2^2+p\|g_2\|_2^2\big)
-c_{\text{\tiny\ref{prop: ell 2 update}}} p(1-p)\mu^6 R^2 N
$}\Big\}\geq 1-C_{\text{\tiny\ref{prop: ell 2 update}}}\mu.$$
Here, $C_{\text{\tiny\ref{prop: ell 2 update}}},c_{\text{\tiny\ref{prop: ell 2 update}}}>0$
are universal constants.
\end{prop}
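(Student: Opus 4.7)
The plan is to first reduce algebraically to a shift-distance lower bound, then to exploit the spike structure of $g_1$ via a double counting argument, and finally to close the delicate regime $\|g_2\|_2\approx\|g_1\|_2$ by a rigidity argument.

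A direct expansion and the pointwise identity $(1-p)a^2+pb^2-((1-p)a+pb)^2=p(1-p)(a-b)^2$ give
$$
(1-p)\|g_1\|_2^2+p\|g_2\|_2^2-\|(1-p)g_1(\cdot)+pg_2(\cdot+Y)\|_2^2=p(1-p)\,\|g_1-g_2(\cdot+Y)\|_2^2,
$$
so the claim is equivalent to $\|g_1-g_2(\cdot+Y)\|_2^2\ge c_{\text{\tiny\ref{prop: ell 2 update}}}\mu^6R^2N$ with probability at least $1-C_{\text{\tiny\ref{prop: ell 2 update}}}\mu$ over the random $Y\in J$. The $\mu^4$-Lipschitz bound on $\log_2 g_1$ implies that every spike $s\in S:=\{t\in I_0:g_1(t)\ge 8R\}$ sits in an integer interval of length at least $\mu^{-4}$ on which $g_1\ge 4R$. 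Combined with $\sum_I g_1\le RN$ for every length-$N$ window, this forces the pointwise maximum of $g_1$ on such a window to be at most $CRN\mu^4$, and in particular $\|g_1\mathbf 1_{I_0}\|_2^2\le CR^2N^2\mu^4$. The third hypothesis yields $\mu N\le|S|\le N/8$ and $\|g_1\mathbf 1_{I_0}\|_2^2\ge 64\mu R^2N$.

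Call $Y\in J$ bad if $\|g_1-g_2(\cdot+Y)\|_2^2<c\mu^6R^2N$, and let $\mathcal B$ be the set of bad $Y$. Since every position $t$ with $|g_1(t)-g_2(t+Y)|\ge R$ contributes at least $R^2$ to the squared norm, at most $c\mu^6N\le\mu N/2$ such positions can exist for a bad $Y$; hence at least $\mu N/2$ spikes $s\in S$ satisfy $g_2(s+Y)\ge 7R$, giving $|(S+Y)\cap B|\ge\mu N/2$ where $B:=\{u:g_2(u)\ge 7R\}$. A double counting argument then yields
$$
|\mathcal B|\cdot\mu N/2\le\sum_{Y\in\mathcal B}|(S+Y)\cap B|\le|B|\cdot|S|\le\frac{\|g_2\|_2^2}{49R^2}\cdot\frac{N}{8},
$$
so $|\mathcal B|\lesssim \|g_2\|_2^2/(\mu R^2)$. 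A case split now finishes: if $\bigl|\|g_1\|_2-\|g_2\|_2\bigr|^2\ge c\mu^6R^2N$, the Cauchy--Schwarz lower bound $\|g_1-g_2(\cdot+Y)\|_2\ge\bigl|\|g_1\|_2-\|g_2\|_2\bigr|$ makes every $Y\in J$ good and $\mathcal B=\emptyset$. Otherwise $\|g_2\|_2^2\le 2\|g_1\|_2^2+O(\mu^6R^2N)$, and a localized version of the previous counting (using $\|g_2\mathbf 1_{I_0+J}\|_2^2$ in place of $\|g_2\|_2^2$, together with the max bound from the Lipschitz+sum conditions transported to $g_2$ via the proximity $g_2(\cdot+Y)\approx g_1$ on $I_0$) must be combined to conclude $|\mathcal B|\le C\mu|J|$.

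The main obstacle is the last step: since $g_2$ is otherwise arbitrary, bounding $\|g_2\mathbf 1_{I_0+J}\|_2^2$ to the required precision requires exploiting the rigidity that many bad $Y$'s impose on $g_2$. The triangle inequality applied to $Y_1,Y_2\in\mathcal B$ forces $\|g_2-g_2(\cdot+(Y_2-Y_1))\|_2\lesssim\mu^3R\sqrt N$, so a large $\mathcal B$ forces $g_2$ to be approximately periodic on many incompatible scales simultaneously, which together with $g_2\in\ell_1(\mathbb Z)$ and $\|g_2\|_2\approx\|g_1\|_2$ is self-contradictory unless $|\mathcal B|$ is small. Quantifying this rigidity, inheriting approximate Lipschitzness of $\log g_2$ on the overlap from that of $\log g_1$, and tracking constants so that the final exponent is precisely $\mu^6=\mu^2\cdot\mu^4$ (spike density squared times Lipschitz scale) is where the bulk of the technical effort sits.
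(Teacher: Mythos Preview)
Your algebraic reduction is correct and is exactly Lemma~\ref{l: aux -98745220985-32} in the paper: the inequality to be proved is equivalent to
\[
\|g_1-g_2(\cdot+Y)\|_2^2\ge c\,\mu^6R^2N\quad\text{with probability}\ \ge 1-C\mu.
\]
From that point on, however, your strategy diverges from the paper and has a genuine gap.

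\textbf{The gap.} Your double-counting step bounds $|\mathcal B|$ in terms of $|B|=|\{u:g_2(u)\ge 7R\}|$ (or its localization to $I_0+J$), but the proposition puts \emph{no} hypothesis whatsoever on $g_2$: no Lipschitz condition, no window-sum bound. Consequently $|B|$ (even restricted to $I_0+J$) can be of order $|J|$, and the double count gives only $|\mathcal B|\lesssim |B|/\mu$, which is off from the target $|\mathcal B|\le C\mu|J|$ by a factor $1/\mu^2$ at best. You acknowledge this and propose to close it by a rigidity argument: many bad shifts force $\|g_2-g_2(\cdot+d)\|_2\lesssim\mu^3R\sqrt N$ for many differences $d$. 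But this near-periodicity is perfectly compatible with $g_2\in\ell_1(\Z)$ and $\|g_2\|_2\approx\|g_1\|_2$; nothing in the hypotheses prevents $g_2$ from being, say, a long slowly-varying plateau on which all shifts in $J$ look essentially the same. The claimed contradiction is not there, and the sketch of ``inheriting approximate Lipschitzness of $\log g_2$'' is circular (it presupposes closeness to $g_1$, which only holds on the bad set you are trying to bound). The exponent bookkeeping ``$\mu^6=\mu^2\cdot\mu^4$'' is post-hoc and not supported by the argument as written.

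\textbf{What the paper does instead.} The paper never tries to control $g_2$. It uses only the structure of $g_1$: the window-sum bound plus the $\mu^4$-Lipschitz condition force every cluster of spikes to be flanked by a \emph{valley} --- a region where $g_1\le a$ for some $a\in[4R,8R)$ --- and this valley has a guaranteed width. Concretely, the paper selects a level $a$ with $|\{t\in\widetilde I:g_1(t)\in(a,2^{\mu^2}a]\}|\lesssim\mu^2N$, then greedily builds disjoint intervals $I'_1,\dots,I'_h$ each of which contains mostly ``high'' points ($g_1\ge 2^{\mu^2}a$), a controlled sliver of ``middle'' points, and a \emph{definite proportion} $\asymp\mu$ of ``low'' points ($g_1\le a$); the Lipschitz hypothesis is what guarantees $|I'_k|\gtrsim 1/\mu^2$ so that this proportion is actually attained. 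The workhorse is then Lemma~\ref{l: aux averaging}, a pigeonhole/translation statement valid for \emph{arbitrary} $g_2$: on any such interval $I'_k$, for all but a $O(\mu)$-fraction of shifts $Y$, the function $g_2(\cdot+Y)$ must differ from $g_1$ by at least $\kappa/2\asymp\mu^2R$ on $\gtrsim\mu|I'_k|$ points, simply because $g_2(\cdot+Y)$ cannot simultaneously match both the high and the low values of $g_1$. Summing over the good intervals and applying your $\ell_2$ identity gives the $\mu^6R^2N$ drop.

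The missing idea in your attempt is precisely this use of the \emph{low} values of $g_1$ adjacent to the spikes; working only with the spike set $S$ forces you to say something about $g_2$, which the hypotheses do not allow.
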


Before proving the proposition, we consider two lemmas.

\begin{lemma}\label{l: aux -98745220985-32}
Let $f,g\in\ell_2(\Z)$, and assume that $\kappa>0$ and $k\in\N$ are such that
$$\big|\big\{t\in\Z:\;|f(t)-g(t)|\geq \kappa\big\}\big|\geq k.$$
Let $p\in(0,1)$.
Then $\big\|p f+(1-p)g\big\|_2^2\leq \big(p\|f\|_2^2+(1-p)\|g\|_2^2\big)-p(1-p)\kappa^2k$.
\end{lemma}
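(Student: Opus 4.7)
The plan is to reduce the statement to a pointwise convexity identity, sum it over $\Z$, and then keep only the contribution from the set on which $|f-g|\geq\kappa$.

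The key observation is the elementary identity, valid for any reals $a,b$ and any $p\in(0,1)$:
\begin{equation*}
p\,a^2+(1-p)\,b^2-(p\,a+(1-p)\,b)^2=p(1-p)(a-b)^2.
\end{equation*}
This is just the variance of the two-point distribution assigning mass $p$ to $a$ and $1-p$ to $b$, and it follows by direct expansion. I would state this identity as the starting point.

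Next, I would apply the identity pointwise with $a=f(t)$ and $b=g(t)$ for each $t\in\Z$ and sum over $t$. Since $f,g\in\ell_2(\Z)$, every sum that appears converges, and one obtains
\begin{equation*}
p\|f\|_2^2+(1-p)\|g\|_2^2-\|p\,f+(1-p)\,g\|_2^2=p(1-p)\sum_{t\in\Z}(f(t)-g(t))^2.
\end{equation*}

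Finally, to get the lower bound on the right hand side, I would restrict the sum to the set $S:=\{t\in\Z:|f(t)-g(t)|\geq\kappa\}$, which by hypothesis has size at least $k$. On $S$ each summand is at least $\kappa^2$, so $\sum_{t\in\Z}(f(t)-g(t))^2\geq\kappa^2 k$, which gives the claimed inequality after rearranging. There is no real obstacle here; the only thing to double-check is the direction of the inequality and that dropping nonnegative terms outside $S$ preserves it, which it does.
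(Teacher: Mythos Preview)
Your proof is correct and follows essentially the same approach as the paper: the paper also establishes the pointwise identity $(pf(t)+(1-p)g(t))^2 = pf(t)^2+(1-p)g(t)^2 - p(1-p)(f(t)-g(t))^2$ and then says ``which implies the estimate.'' You have simply spelled out the summation and the restriction to the set $S$ more explicitly.
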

\begin{proof}
For any $t\in\Z$ we have
\begin{align*}
\big(&p f(t)+(1-p)g(t)\big)^2\\
&=
p f(t)^2+(1-p)g(t)^2-\big(p(1-p)f(t)^2-2p(1-p)f(t)g(t)+p(1-p)g(t)^2\big)\\
&=
p f(t)^2+(1-p)g(t)^2-p(1-p)\big(f(t)-g(t)\big)^2,
\end{align*}
which implies the estimate.
\end{proof}

\begin{lemma}\label{l: aux averaging}
Let $f,g\in\ell_1(Z)$, and $\delta,\kappa>0$.
Further, assume that $I\subset\Z$ is an integer interval
and $I_1\cup I_2\cup I_3=I$ is a partition of $I$ into three subsets (not necessarily subintervals) such that
$|I_3|\in \big[\delta|I|/2,\delta |I|\big]$, $|I_2|\leq \delta|I|$, and $f(t_1)\geq \kappa+f(t_3)$
for all $t_1\in I_1$ and $t_3\in I_3$. Further, assume that $X$ is an integer random variable uniformly distributed
on an interval $J\subset\Z$ of cardinality at least $|I|$. Then
$$\Prob\big\{\big|\big\{t\in I:\;|f(t)-g(t+X)|\geq\kappa/2\big\}\big|<\delta |I|/4\big\}\leq 64\delta.$$ 
\end{lemma}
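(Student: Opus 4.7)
The plan is to directly bound $\Prob_X\{|G(X)|<\delta|I|/4\}$, where $G(X):=\{t\in I:|f(t)-g(t+X)|\geq \kappa/2\}$, via a first-moment / double-counting argument. Set $B(X):=I\setminus G(X)$ and let $\mathcal F$ denote the failure event. Two easy reductions come first. (i)~Failure $|G(X)|<\delta|I|/4$ together with $|I_2|,|I_3|\leq \delta|I|$ forces both $|B(X)\cap I_3|\geq |I_3|-\delta|I|/4\geq \delta|I|/4$ (using $|I_3|\geq \delta|I|/2$) and $|B(X)\cap I_1|\geq |I_1|-\delta|I|/4$. (ii)~The separation hypothesis yields a threshold $v\in[\max_{t_3\in I_3}f(t_3)+\kappa/2,\,\min_{t_1\in I_1}f(t_1)-\kappa/2]$, so that $t\in B(X)\cap I_3$ forces $g(t+X)<v$ while $t\in B(X)\cap I_1$ forces $g(t+X)>v$. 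Thus on $\mathcal F$, the shifted function $g(\cdot+X)$ must simultaneously lie below $v$ on most of $I_3$ and above $v$ on most of $I_1$.

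Next I would double-count pairs $(t_3,t_1)\in I_3\times I_1$ jointly lying in $B(X)$. On $\mathcal F$ the pair count is at least $(|I_3|-\delta|I|/4)(|I_1|-\delta|I|/4)\geq c\,\delta|I|^2$, so
$$|\mathcal F|\cdot c\,\delta|I|^2\;\leq\;\sum_{X\in J}|B(X)\cap I_3|\cdot|B(X)\cap I_1|\;=\;\sum_{(t_3,t_1)}\#\bigl\{X\in J:\ g(t_3+X)<v<g(t_1+X)\bigr\}.$$
Substituting $u=t_3+X$ and $d=t_1-t_3$, each inner set becomes $\{u\in t_3+J:\ g(u)<v<g(u+d)\}$, which records ``upward crossings'' of the level $v$ by $g$ within a window of length $d$. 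Each individual upward crossing of $g$ at $v$ at position $z$ (meaning $g(z-1)\leq v<g(z)$) is witnessed by those $u\in[z-d,z-1]$ with $g(u)<v$, hence contributes at most $d$ to the inner count for a given $d$.

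The main obstacle is producing the clean uniform estimate $|\mathcal F|\leq 64\delta|J|$. A direct per-pair crossing bound introduces a factor equal to the total number of upward crossings of $g$ at $v$, and this number is not controlled by the hypotheses; by itself the bound is useless. The correct argument must therefore exploit cross-cancellation when the crossing count is summed over all offsets $d=t_1-t_3$ and all $t_3\in I_3$, together with the interval structure of $J$ (so that translating $X\mapsto X+1$ changes the ``observed pattern'' of $g$ on $I+X$ by only one entry). I expect the rigorous version to organize the pairs $(t_3,t_1)$ by $d$, combine the level-crossing count with a counting inequality that is uniform in $g$, $\kappa$ and $v$, and extract a bound for $\sum_{X\in J}|B(X)\cap I_3|\cdot|B(X)\cap I_1|$ of the form $C\,\delta|I|^2\cdot|J|$, from which the target probability bound $64\delta$ follows. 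Making this cancellation rigorous and $g$-free is the technical crux I would have to navigate.
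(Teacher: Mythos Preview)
Your setup through the threshold $v$ and the observation that on $\mathcal F$ the shifted function $g(\cdot+X)$ must be below $v$ on most of $I_3$ and above $v$ on most of $I_1$ is correct and matches the paper. But the double-counting via pairs $(t_3,t_1)$ and level crossings is a dead end, and not just for lack of a cancellation trick. First, your target bound is off by a factor of $\delta$: since on $\mathcal F$ the product $|B(X)\cap I_3|\cdot|B(X)\cap I_1|$ is only of order $\delta|I|^2$, you would need $\sum_{X\in J}|B(X)\cap I_3|\cdot|B(X)\cap I_1|\lesssim \delta^2|I|^2|J|$, not $\delta|I|^2|J|$, to extract $|\mathcal F|\lesssim\delta|J|$. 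Second, that stronger bound is simply false in general: take $g$ alternating between values below and above $v$. Then $\mathcal F=\emptyset$ (every shift produces many mismatches), yet for each $X$ roughly half of $I_3$ lands in $B(X)$ and roughly half of $I_1$ lands in $B(X)$, so the sum is of order $\delta|I|^2|J|$. No amount of reorganizing by offset $d$ or counting crossings will squeeze out the missing $\delta$, because the sum you wrote down genuinely does not see the failure event.

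The paper's argument is structurally different and does not count crossings at all. It first reduces to a subinterval $\widetilde J\subset J$ with $|I|/2\le|\widetilde J|\le|I|$; this upper bound is the crucial ingredient you are missing. One then separates shifts $i\in\widetilde J$ according to whether $g(\cdot+i)$ has more than $4\delta|I|$ values $\le v$ on $I$ (such shifts are automatically good via $I_1$). Call $Q$ the complementary set and $S=[\min Q,\max Q]$. Because $|\widetilde J|\le|I|$, one has $S+I=(\min Q+I)\cup(\max Q+I)$, so the \emph{total} number of points $s\in S+I$ with $g(s)\le v$ is at most $8\delta|I|$. This immediately bounds $|\{(t,i)\in I_3\times S:g(t+i)\le v\}|\le 8\delta|I|\cdot|I_3|\le 8\delta^2|I|^2$, and a Markov step then shows that all but $32\delta|I|$ shifts $i\in S$ have $g(\cdot+i)>v$ on most of $I_3$, hence are good via $I_3$. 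The covering identity $S+I=(\min Q+I)\cup(\max Q+I)$ is the mechanism that produces a bound independent of $g$; your crossing count has no analogue of it.
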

\begin{proof}
Without loss of generality, $\delta\leq 1/64$.
Fix any subinterval $\widetilde J\subset J$ of cardinality at least $|I|/2$ and at most $|I|$.
We will prove the probability estimate under the condition that $X$ belongs
to $\widetilde J$.
Then the required result will easily follow by partitioning $J$ into subintervals and combining
estimates for corresponding conditional probabilities.

Set
$$w_3:=\max\limits_{t_3\in I_3}f(t_3);\quad w_1:=\min\limits_{t_1\in I_1}f(t_1),$$ 
and define
$$Q:=\big\{i\in \widetilde J:\;\big|\big\{t\in I:\;g(t+i)\leq (w_1+w_3)/2\big\}\big|\leq 4\delta|I|\big\}.$$
Observe that, in view of the assumption $w_1\geq w_3+\kappa$,
for any point $i\in \widetilde J\setminus Q$ we have
$$
\big|\big\{t\in I:\;|f(t)-g(t+i)|\geq\kappa/2\big\}\big|\geq 4\delta|I|-|I_2|-|I_3|\geq 2\delta|I|.
$$
Thus, if $Q=\emptyset$ then, conditioned on $X\in\widetilde J$,
$\big|\big\{t\in I:\;|f(t)-g(t+X)|\geq\kappa/2\big\}\big|<\delta |I|/4$ holds with probability zero,
and the statement follows. Below, we assume that $Q\neq\emptyset$.

Set $S:=\{\min Q,\min Q+1,\dots,\max Q\}$.
Since $|\widetilde J|\leq|I|$, we have $S+I= (\min Q+I)\cup (\max Q+I)$, whence
\begin{align*}
\big|&\big\{s\in S+I:g(s)\leq (w_1+w_3)/2\big\}\big|\\
&\leq
\big|\big\{t\in I:g(t+\min Q)\leq (w_1+w_3)/2\big\}\big|
+\big|\big\{t\in I:g(t+\max Q)\leq (w_1+w_3)/2\big\}\big|\\
&\leq 8\delta |I|.
\end{align*}
The above estimate immediately gives
$$\Big|\Big\{(t,i)\in I_3\times S:\;
g(t+i)\leq \frac{w_1+w_3}{2}\Big\}\Big|
\leq 8\delta |I|\cdot |I_3|\leq 8\delta^2 |I|^2.
$$
Hence, the number of points $i\in S$ such that
\begin{equation}\label{eq: aux 057605353}
\Big|\Big\{t\in I_3:\;
g(t+i)\leq \frac{w_1+w_3}{2}\Big\}\Big|> \delta |I|/4,
\end{equation}
is at most $32\delta |I|$.
On the other hand,
for every $i\in S$ such that \eqref{eq: aux 057605353} does not hold,
we clearly have
$$\big|\big\{t\in I:\;|f(t)-g(t+i)|\geq \kappa/2\big\}\big|\geq |I_3|-\delta |I|/4\geq \delta |I|/4.$$

Summarizing, we obtain
$$\Big|\Big\{i\in \widetilde J:\;
\big|\big\{t\in I:\;|f(t)-g(t+i)|\geq \kappa/2\big\}\big|<\delta |I|/4\Big\}\Big|\leq 32\delta |I|,
$$
whence
$$
\Prob\big\{\big|\big\{t\in I:\;|f(t)-g(t+X)|\geq\kappa/2\big\}\big|<\delta |I|/4\;\vert\;X\in\widetilde J\big\}\leq 64\delta.
$$
The result follows.
\end{proof}

\begin{proof}[{Proof of Proposition~\ref{prop: ell 2 update}}]
Let $\delta:=8\mu$, $\varepsilon:=\mu^4$ and $\widetilde I:=I_0+\{0,1,\dots,N\}$, so that $|\widetilde I|=2N$.
It is not difficult to see that there is a real interval of the form $(a,2^{\mu^2}a]$, where $4R\leq a\leq 2^{-\mu^2}\cdot 8R$
and such that
\begin{equation}\label{eq: aux 30872976598}
\Big|\Big\{t\in\widetilde I:\;g_1(t)\in \big(a,2^{\mu^2}a\big]\Big\}\Big|\leq \frac{2N}{\lfloor 1/\mu^2\rfloor}.
\end{equation}
We will inductively construct a finite sequence of integer intervals $I'_1,I'_2,\dots,I'_h$ as follows.

At the first step, let $t_1^\ell:=\min\{t\in\widetilde I:\;g_1(t)\geq 2^{\mu^2}a\}$,
$$t_1^r:=\max\big\{t\in\widetilde I:\;t\geq t_1^\ell;\;|\{s\in \{t_1^\ell,\dots,t\}:\;g_1(s)\leq a\}|\leq \delta (t-t_1^\ell+1)\big\},$$
and define $I'_1:=\{t_1^\ell, t_1^\ell+1,\dots, t_1^r\}$ (note that by the definition of $I_0$, $t_1^\ell$ exists).
In words, we choose $t_1^r$ to be the largest integer in $\widetilde I$ such that the number
of the elements $s\in I'_1$ corresponding to ``small'' values $g_1(s)\leq a$, is at most $\delta|I'_1|$.
If $\max I'_1\geq \max I_0$ or
if $g_1(t)< 2^{\mu^2}a$ for all $t^r_1=\max I'_1< t\leq \max I_0$ then we set $h:=1$ and complete the process.
Otherwise, we go to the second step.

At $k$-th step, $k>1$, we define $t_k^\ell> I'_{k-1}$ to be the smallest integer in $(\max I'_{k-1},\infty)$
such that $g_1(t_k^\ell)\geq 2^{\mu^2} a$
(the previous step of the construction guarantees that such $t_k^\ell$ exists and belongs to $I_0$).
We set $t_k^r:=\max\big\{t\in \widetilde I:\;t\geq t_k^\ell;\;|\{s\in \{t_k^\ell,\dots,t\}:\;g_1(s)\leq a\}|\leq \delta (t-t_k^\ell+1)\big\}$,
and $I'_k:=\{t_k^\ell,t_k^\ell+1,\dots,t_k^r\}$.
If $\max I'_k\geq \max I_0$ or
if $g_k(t)< 2^{\mu^2}a$ for all $t^r_k=\max I'_k< t\leq \max I_0$ then set $h:=k$ and complete,
otherwise go to the next step.

\medskip

Next, we observe some important properties of the constructed sequence.
\begin{itemize}
\item[(a)] The left-points of all intervals are contained in $I_0$, and the union $\bigcup_{k=1}^h I'_k$
contains the set $\{t\in I_0:\;g_1(t)\geq 2^{\mu^2} a\}$; in particular, cardinality of the union is at least $\mu N$.

\item[(b)] The cardinality of any interval $I'_k$ cannot exceed $N$ since our assumption on the function $g_1$,
together with the definition of $I'_k$, gives
$$2R|I'_k|\leq a|I'_k|/2\leq a (|I'_k|-\delta |I'_k|)  \leq \sum\limits_{t\in I'_k}g_1(t)\leq \sum\limits_{t\in \widetilde I}g_1(t)\leq 2RN.$$
In particular, this implies that $\max I'_h$ is {\it strictly less} than $\max \widetilde I$.

\item[(c)] The condition that $\log_2 g_1$ is $\varepsilon$--Lipschitz implies that
for any $k\leq h$, $|I'_k|\geq \lfloor \mu^2/\varepsilon\rfloor>\frac{1}{4\mu}$. Indeed, since $g_1(t+1)\geq 2^{-\varepsilon}g_1(t)$
for all $t\in\Z$, we have $g_1(t)>2^{-\mu^2}g_1(t_k^\ell)\geq a$ whenever $0\leq t-t_k^\ell< \mu^2/\varepsilon$.
On the other hand, the last conclusion in property (b) implies that
$|\{s\in \{t_k^\ell,\dots,t_k^r+1\}:\;g_1(s)\leq a\}|> \delta (t_k^r+1-t_k^\ell+1)>\delta |I_k'|$, as $t_k^r+1\in\widetilde I$.

\item[(d)] Property (c), in its turn, implies that for any $k\leq h$ we have $1\leq \delta |I_k'|/2$, whence
$|\{t\in I'_k:\;g_1(t)\leq a\}|\geq \delta |I'_k|/2$.

\end{itemize}

Our goal is to apply Lemma~\ref{l: aux averaging} to the constructed intervals.
For each $k\leq h$, we define the partition $I'_k=I'_{k,1}\cup I'_{k,2}\cup I'_{k,3}$,
where
$$I'_{k,1}:=\big\{t\in I'_k:\;g_1(t)\geq 2^{\mu^2} a\big\};\quad I_{k,3}':=
\big\{t\in I'_k:\;g_1(t)\leq a\big\};\quad I'_{k,2}:=I'_k\setminus(I_{k,1}'\cup I_{k,3}').$$
Additionally, set $\kappa:=\big(2^{\mu^2}-1\big)\cdot 4R$.
We define subset of {\it good} indices $G\subset[h]$ as
$$G:=\big\{k\leq h:\;|I'_{k,2}|\leq \delta |I'_k|\big\}.$$
Note that \eqref{eq: aux 30872976598}, together with property (a) of the intervals, implies that
$$\sum\limits_{k\in G}|I'_k|\geq \mu N-\sum\limits_{k\in [h]\setminus G}|I'_k|\geq \mu N-
\frac{1}{\delta}\frac{2N}{\lfloor 1/\mu^2\rfloor}\geq \mu N/2.$$
By Lemma~\ref{l: aux averaging}, for every $k\in G$ the event
$$\Event_k:=\big\{\big|\big\{t\in I'_k:\;|g_1(t)-g_2(t+Y)|\geq\kappa/2\big\}\big|<\delta |I'_k|/4\big\}$$
has probability at most $64\delta$. 
Hence, the expectation of the sum
$$\sum\limits_{k\in G}|I'_k|{\bf 1}_{\Event_k}$$
is at most $64\delta \cdot \sum\limits_{k\in G}|I'_k|$, and in view of Markov's inequality and the
lower bound for $\sum\limits_{k\in G}|I'_k|$,
$$\Prob\Big\{\sum\limits_{k\in G}|I'_k|{\bf 1}_{\Event_k^c}
\geq \frac{\mu N}{4}\Big\}
=1-\Prob\Big\{\sum\limits_{k\in G}|I'_k|{\bf 1}_{\Event_k}
> \sum\limits_{k\in G}|I'_k|-\frac{\mu N}{4}\Big\}
\geq 1-\frac{64\delta \, \sum_{k\in G}|I'_k|}{\frac{1}{2}\sum_{k\in G}|I'_k|}
= 1-128\delta.$$

As the final remark, for any realization of $Y$ such that $\sum\limits_{k\in G}|I'_k|{\bf 1}_{\Event_k^c}
\geq \frac{\mu N}{4}$, we have
$\big|\big\{t\in \widetilde I:\;|g_1(t)-g_2(t+Y)|\geq\kappa/2\big\}\big|\geq \frac{\delta}{4}\frac{\mu N}{4}$,
whence, in view of Lemma~\ref{l: aux -98745220985-32}
$$\mbox{$\big\|(1-p)\,g_1(\cdot)+p\,g_2(\cdot+Y)\big\|_2^2
\leq \big((1-p)\,\|g_1\|_2^2+p\,\|g_2\|_2^2\big)-p(1-p)\,\frac{\kappa^2}{4}\frac{\mu N}{4}\frac{\delta}{4}$}.$$
The result follows.
\end{proof}

The estimate on the $\ell_2$--norm
of the average in Proposition~\ref{prop: ell 2 update}
involves the parameter $\mu$ which, roughly speaking, determines the cardinality of the largest cluster of spikes in $g_1$.
If the cardinality is small, the estimate given by the proposition becomes weaker.
Even assuming best possible values for $\mu$, $n$ applications
of the averaging to obtain $f_{\mathcal A,p,n}$ from $f$
would not provide a bound on $\|f_{\mathcal A,p,n}\|_2$ which could be translated
into a meaningful estimate for the $\ell_\infty$--norm of the average.

Returning to the example that we discussed on page~\pageref{pge: desc 1}, if the function
$g_{N,d,I,\eta}$ is such that $|I|$ is much less than $d$, i.e.\ the spikes are rare
then with probability $1-\Theta(\frac{|I|}{d})\approx 1$ the averaged function $g^{av}_{N,d,I,\eta}$
will not have {\it any} spikes left.
When the spikes are located in an irregular fashion, such strong property does not hold, but the following phenomenon can
still be observed: if the spikes are rare then with a probability close to one the averaged function will
have much fewer (by a large factor) spikes.
In other words, in the regime when there are few points where the function is large,
rather than measuring the $\ell_2$--norm of the average, it is more useful to consider how the cardinality of
the set of spikes shrinks under averaging. Combining this idea with Proposition~\ref{prop: ell 2 update}, we can derive the following statement:

\begin{prop}\label{prop: refinement}
For any $p\in(0,1/2]$, $\varepsilon\in(0,1)$, $\widetilde R\geq 1$, $L_0\geq 16 \widetilde R$ and $M\geq 1$ there are
$n_{\text{\tiny\ref{prop: refinement}}}=n_{\text{\tiny\ref{prop: refinement}}}(p,\varepsilon,L_0,\widetilde R,M)>0$
and $\eta_{\text{\tiny\ref{prop: refinement}}}=\eta_{\text{\tiny\ref{prop: refinement}}}(p,\varepsilon,L_0,\widetilde R,M)\in(0,1)$
with the following property. Let $L_0\geq L\geq 16\widetilde R$, let $n\geq n_{\text{\tiny\ref{prop: refinement}}}$, $N\leq 2^n$,
let $g\in\ell_1(\Z)$ be a non-negative function satisfying
\begin{itemize}
\item $\|g\|_1=1$;
\item $\log_2 g$ is $\eta_{\text{\tiny\ref{prop: refinement}}}$--Lipschitz;
\item $\sum\limits_{t\in I}g(t)\leq \frac{\widetilde R}{\sqrt{n}}$ for any integer interval $I$ of cardinality $N$;
\item $\|g\|_\infty\leq \frac{L}{N\sqrt{n}}$.
\end{itemize}
For each $i\leq \lfloor \varepsilon n\rfloor$, let $X_i$ be a random variable uniform on some disjoint
union of integer intervals of cardinality at least $N$ each;
and assume that $X_1,\dots,X_{\lfloor \varepsilon n\rfloor}$ are independent.
Define a random function $\widetilde g\in\ell_1(\Z)$ as
$$\widetilde g(t):=\Exp_b\, g\Big(t+\sum_{i=1}^{\lfloor \varepsilon n\rfloor}b_i X_i\Big)
=\sum\limits_{(v_i)_{i=1}^{\lfloor \varepsilon n\rfloor}\in\{0;1\}^{\lfloor \varepsilon n\rfloor}}
p^{\sum_i v_i}(1-p)^{\lfloor \varepsilon n\rfloor-\sum_i v_i}\,g\big(t+v_1 X_1
+\dots+v_{\lfloor \varepsilon n\rfloor}X_{\lfloor \varepsilon n\rfloor}\big),$$
where $b=(b_1,\dots,b_n)$ is the vector of independent Bernoulli($p$) components.
Then
$$\Prob\big\{\mbox{$\|\widetilde g\|_\infty> \frac{(p/\sqrt{2}+1-p)L}{N\sqrt{n}}$}\big\}\leq \exp(-M n).$$
\end{prop}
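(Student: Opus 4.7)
The plan is to iterate the averaging $f_i := (1-p)f_{i-1}(\cdot) + p f_{i-1}(\cdot + X_i)$ for $i=1,\dots,\lfloor \varepsilon n\rfloor$, starting from $f_0:=g$ and ending at $f_{\lfloor \varepsilon n\rfloor}=\widetilde g$, and at each step invoke a dichotomy combining Proposition~\ref{prop: ell 2 update} (many spikes, $\ell_2^2$-decrease) with a direct ``shift disjointness'' argument (few spikes, pointwise $\ell_\infty$-decrease). Set $\mu=\mu(p,\varepsilon,L_0,\widetilde R,M)\in(0,1/64]$ small and $\eta_{\text{\tiny\ref{prop: refinement}}}:=\mu^4$; write $R:=\widetilde R/(N\sqrt n)$ and $L_*:=(p/\sqrt 2+1-p)L$. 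All four hypotheses on $g$ are preserved under averaging (convex combinations with translates): each $f_i$ satisfies $\|f_i\|_1=1$, $\log_2 f_i$ is $\mu^4$-Lipschitz, $\sum_{t\in I}f_i(t)\le RN$ for every interval $I$ with $|I|=N$, and $\|f_i\|_\infty\le L/(N\sqrt n)$; in particular $\|g\|_2^2\le\|g\|_\infty\|g\|_1\le L/(N\sqrt n)$.

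The key direct observation is the following. With $S_{i-1}:=\{t:f_{i-1}(t)>L/(\sqrt 2\,N\sqrt n)\}$, if $X_i$ satisfies $S_{i-1}\cap(S_{i-1}-X_i)=\emptyset$, then for every $t$ at most one of $f_{i-1}(t),f_{i-1}(t+X_i)$ exceeds $L/(\sqrt 2\,N\sqrt n)$, and a short case analysis using $\|f_{i-1}\|_\infty\le L/(N\sqrt n)$ and $p\le 1/2$ yields $f_i(t)\le (1-p)L/(N\sqrt n)+p\cdot L/(\sqrt 2\,N\sqrt n)=L_*/(N\sqrt n)$ pointwise. Since $X_i$ is uniform on a set of measure $\ge N$, disjointness holds with probability at least $1-|S_{i-1}|^2/N$; thus, whenever $|S_{i-1}|^2\le C\mu N$, a single step forces $\|f_i\|_\infty\le L_*/(N\sqrt n)$ with probability $\ge 1-C\mu$, after which the bound persists.

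The step-by-step dichotomy is: either (a) the many-spikes hypothesis of Proposition~\ref{prop: ell 2 update} holds (some interval $I_0$ of length $N$ has at least $\mu N$ points with $f_{i-1}(t)\ge 8R$), yielding $\|f_i\|_2^2\le\|f_{i-1}\|_2^2-c_{\text{\tiny\ref{prop: ell 2 update}}}\,p(1-p)\mu^6 R^2 N$ with probability $\ge 1-C_{\text{\tiny\ref{prop: ell 2 update}}}\mu$; or (b) the hypothesis fails, in which case one argues that $|S_{i-1}|^2\le C\mu N$, triggering the shift-disjointness shrinkage. Each $\ell_2^2$-decrease in regime (a) also bounds $|S_i|$ from above, since each element of $S_i$ contributes at least $(L/(\sqrt 2\,N\sqrt n))^2$ to $\|f_i\|_2^2$; hence enough successful regime-(a) steps always eventually force the iteration into regime (b). A union/Chernoff-type estimate of the form $\binom{\lfloor\varepsilon n\rfloor}{k}(C\mu)^k\le\exp(-Mn)$ (valid for $\mu$ taken small as a function of $M$, using that failures of both regimes happen with probability at most $C\mu$ and are jointly controllable across steps) then yields the global probability bound.

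The main obstacle is proving the regime-(b) implication: that failure of the many-spikes hypothesis, together with the global constraints on $f_{i-1}$ (mass control on windows of length $N$, $\mu^4$-Lipschitzness of $\log_2 f_{i-1}$, and the accumulated $\ell_2^2$-decrease from preceding regime-(a) steps), forces $|S_{i-1}|^2\le C\mu N$. This is where the Lipschitz bound plays its crucial quantitative role, controlling the width of the ``clusters'' around each high value of $f_{i-1}$ and, together with the bound $\sum_I f_{i-1}\le \widetilde R/\sqrt n$, limiting both the number of such clusters and their total footprint, even in the challenging regime where $N$ is as large as $2^n$. A careful accounting of this cluster structure is expected to be the technical heart of the argument.
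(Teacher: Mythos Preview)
Your overall architecture---iterate the one-step averaging, and at each step invoke a dichotomy between ``many spikes'' (apply Proposition~\ref{prop: ell 2 update} to drop $\ell_2^2$) and ``few spikes''---matches the paper's proof. The gap is in your treatment of the few-spikes regime, and it is not merely a missing detail: as stated, the implication you want is false.

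Your plan for regime~(b) is to show that once the many-spikes hypothesis of Proposition~\ref{prop: ell 2 update} fails, the \emph{global} count $|S_{i-1}|$ of points above level $L/(\sqrt2\,N\sqrt n)$ satisfies $|S_{i-1}|^2\le C\mu N$, so that one random shift makes $S_{i-1}\cap(S_{i-1}-X_i)=\emptyset$ with probability $\ge 1-|S_{i-1}|^2/N$ and collapses $\|f_i\|_\infty$ to the target in a single step. But failure of the many-spikes hypothesis is a purely \emph{local} statement (fewer than $\mu N$ spikes in every length-$N$ window) and gives no control on the total $|S_{i-1}|$. The only global bound available is $|S_{i-1}|\le \sqrt2\,N\sqrt n/L$ from $\|f_{i-1}\|_1=1$, so $|S_{i-1}|^2/N$ is of order $Nn/L^2$; since $N$ may be as large as $2^n$ while $L\le L_0$ is fixed, this is hopeless. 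Driving $|S_{i-1}|$ down via accumulated $\ell_2^2$-decreases is equally hopeless: each successful step lowers $\|f_i\|_2^2$ by $\Theta(1/(Nn))$, and reaching $\|f_i\|_2^2\lesssim N^{-3/2}$ (which is what $|S_i|\lesssim\sqrt N$ would require) would take of order $\sqrt N$ steps, but you only have $\lfloor\varepsilon n\rfloor$. The Lipschitz hypothesis and the window-mass bound are both local and cannot limit how many disjoint length-$N$ windows carry spikes.

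The paper handles the sparse regime in a completely different way, never attempting any global bound on $|S_{i-1}|$. It splits the $\lfloor\varepsilon n\rfloor$ steps in half. The first $m=\lfloor\varepsilon n/2\rfloor$ steps are used exactly as you describe to reach, with probability $\ge 1-e^{-2Mn}$, the \emph{local} conclusion that some $g_i$ ($i\le m$) has at most $\mu N$ points above $H:=L/(2N\sqrt n)$ in every length-$N$ window; the alternative would force $\|g_m\|_2^2<0$. Lemma~\ref{l: aux 07395875092835} then propagates this forward: for all $i>m$, every length-$N$ window contains at most $3\mu N$ points where $g_i\ge\sqrt2\,H$. The remaining $m$ steps are handled by a descendant-sequence argument parallel to the proof of Proposition~\ref{p: rough decay}: if $\|\widetilde g\|_\infty\ge(\sqrt2\,p+2(1-p))H$, one can build a chain $(t_i)_{i=m}^{2m}$ with $g_{i-1}(t_{i-1})\ge\sqrt2\,H$ and $g_{i-1}(t_{i-1}\pm X_i)\ge\sqrt2\,H$ at every step. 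The per-window sparsity, together with the assumption that $X_i$ is uniform on a union of intervals of length $\ge N$, bounds each conditional probability by $12\mu$; multiplying over $m$ steps and union-bounding over the at most $2^m(\sqrt2\,H)^{-1}$ chains gives probability $\le (24\mu)^m/(\sqrt2\,H)\le e^{-Mn}$ for $\mu$ chosen small. The key point is that local sparsity is exploited \emph{multiplicatively over $m$ steps}, rather than in a single disjointness event; this is what makes the argument insensitive to the size of $N$.
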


In words, the above proposition tells us that, given a ``preprocessed'' function $g$ with
$\|g\|_\infty\leq \frac{L}{N\sqrt{n}}$,
after $\varepsilon n$ averagings the $\ell_\infty$--norm of the function drops at least by the factor $p/\sqrt{2}+1-p$
with a probability superexponentially close to one.
By applying the proposition several times to a ``preprocessed'' function given by Proposition~\ref{p: rough decay},
we will be able to complete the proof of the theorem.

Before proving the proposition, let us consider a simple lemma.

\begin{lemma}\label{l: aux 07395875092835}
Let $f\in\ell_1(\Z)$ be a non-negative function, let $m,N\in\N$, $p\in(0,1)$, $H,\mu>0$,
and assume that $\|f\|_\infty\leq 2H$ and that for any integer interval $I$ of cardinality $N$ we have
$$\big|\big\{t\in I:\;f(t)\geq H\big\}\big|\leq \mu N.$$
Choose any integers $x_1,x_2,\dots,x_m$ and set
$$\widetilde f(t):=\Exp_b \,f\big(t+b_1 x_1
+\dots+b_{m}x_{m}\big),
$$
where $b=(b_1,\dots,b_m)$ is the vector of independent Bernoulli($p$) random variables.
Then for any integer interval $J$ of cardinality $N$ we have
$$
\big|\big\{t\in J:\;\widetilde f(t)\geq \sqrt{2}H\big\}\big|\leq \mu N/\big(\sqrt{2}-1\big).
$$ 
\end{lemma}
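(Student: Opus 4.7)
The plan is to reduce the bound on $\widetilde f$ to a bound on a much simpler average of an indicator, and then to apply Markov's inequality.

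First, I would use the pointwise bound $f \le 2H$ to capture what it \emph{means} for $\widetilde f(t)$ to exceed $\sqrt 2 H$. Write
$$q(t) := \Exp_b\,{\bf 1}_{\{f(t+b_1x_1+\dots+b_mx_m)\ge H\}},$$
which is the Bernoulli-weighted mass of $v\in\{0,1\}^m$ for which the shifted value of $f$ is ``large''. Splitting $\widetilde f(t)$ according to this dichotomy and using $f\le 2H$ on the ``large'' set and $f< H$ on the complementary set, one gets $\widetilde f(t) \le 2H\,q(t) + H(1-q(t)) = H(1+q(t))$. Consequently $\widetilde f(t) \ge \sqrt 2 H$ forces $q(t)\ge \sqrt 2 - 1$.

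Next, I would estimate $\sum_{t\in J} q(t)$ by swapping the sum and the expectation:
$$\sum_{t\in J} q(t) = \Exp_b\,\bigl|\{t\in J:\;f(t+b_1x_1+\dots+b_mx_m)\ge H\}\bigr|.$$
For any fixed realization of $b$, the inner set is a translate of the integer interval $J$ of cardinality $N$, which is itself an integer interval of cardinality $N$. Hence the assumed level-set bound applies and gives $\le \mu N$ for every realization of $b$, so $\sum_{t\in J} q(t) \le \mu N$.

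Finally, Markov's inequality combined with the implication from the first step yields
$$\bigl|\{t\in J:\;\widetilde f(t)\ge \sqrt 2 H\}\bigr|\;\le\;\bigl|\{t\in J:\;q(t)\ge \sqrt 2 - 1\}\bigr|\;\le\;\frac{\mu N}{\sqrt 2 - 1},$$
which is the claim. There is no real obstacle here: the argument is a short two-step reduction, and the only technical point to keep in mind is that shifting $J$ by an arbitrary integer preserves both cardinality and the ``interval'' property, so the hypothesis on $f$ is applicable uniformly in $b$.
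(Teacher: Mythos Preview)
Your proof is correct and follows essentially the same approach as the paper: both derive the implication $\widetilde f(t)\ge\sqrt{2}H\Rightarrow q(t)\ge\sqrt{2}-1$ from the split $f\le 2H\cdot{\bf 1}_{\{f\ge H\}}+H\cdot{\bf 1}_{\{f<H\}}$, then bound $\sum_{t\in J}q(t)\le\mu N$ by applying the level-set hypothesis to each shifted interval, and finish with Markov. Your write-up is slightly more explicit about the final Markov step than the paper's, but the argument is the same.
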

\begin{proof}
Take any point $t\in\Z$ such that $\widetilde f(t)\geq \sqrt{2}H$.
We have
\begin{align*}
\sqrt{2}H\leq \widetilde f(t)&\leq 
\Exp_b\,{\bf 1}_{\{f(\cdot+b_1 x_1
+\dots+b_{m}x_{m})\geq H\}}(t)\cdot 2H
+
\Exp_b\,{\bf 1}_{\{f(\cdot+b_1 x_1+\dots+b_{m}x_{m})< H\}}(t)\cdot H\\
&=H+H\,\Exp_b\,{\bf 1}_{\{f(\cdot+b_1 x_1
+\dots+b_{m}x_{m})\geq H\}}(t),
\end{align*}
so that
\begin{equation}\label{eq: aux 3998762453646}
\Exp_b\,{\bf 1}_{\{f(\cdot+b_1 x_1
+\dots+b_{m}x_{m})\geq H\}}(t)
\geq \sqrt{2}-1.
\end{equation}
On the other hand, for any interval $J$ of cardinality $N$ and any choice of
$(v_i)_{i=1}^{m}\in\{0,1\}^m$, we have, by the assumptions of the lemma,
$$
\sum\limits_{s\in J}{\bf 1}_{\{f(\cdot+v_1 x_1
+\dots+v_{m}x_{m})\geq H\}}(s)\leq \mu N,
$$
whence
$$
\sum\limits_{s\in J}
\Exp_b\,{\bf 1}_{\{f(\cdot+b_1 y_1
+\dots+b_{m}y_{m})\geq H\}}(s)
\leq \mu N.
$$
Combining the last inequality with the condition \eqref{eq: aux 3998762453646}, we get the statement.
\end{proof}

\begin{proof}[{Proof of Proposition~\ref{prop: refinement}}]
Fix any admissible parameters $\varepsilon$, $p$, $\widetilde R$, $L$, $N$ and $M$, and set
$$\mu:=\frac{1}{24 C_{\text{\tiny\ref{prop: ell 2 update}}}}\exp\bigg(-\frac{16M}{\varepsilon}\bigg);\quad
\eta:=\mu^4.$$
We will assume that $n$ is sufficiently large so that $\varepsilon n/4\geq 1$ and, moreover,
\begin{equation}\label{eq: aux 209823059325-2}
c_{\text{\tiny\ref{prop: ell 2 update}}} p(1-p) \lfloor \varepsilon n/2\rfloor \mu^6 \widetilde R^2/2> L_0\sqrt{n}.
\end{equation}
Set
$$m:=\lfloor \varepsilon n/2\rfloor;\quad H:=\frac{L}{2N\sqrt{n}}.$$
We fix any function $g\in\ell_1(\Z)$ satisfying conditions of the proposition with parameters $\eta$, $\widetilde R$, $N$, $L$, $n$.
Note that $\|g\|_\infty\leq 2H$.
Define $g_0:=g$,
$$g_k(t):=p\,g_{k-1}(t+X_k)+(1-p)\,g_{k-1}(t),\quad k=1,2,\dots,\lfloor \varepsilon n\rfloor,\quad t\in\Z,$$
so that either $\widetilde g=g_{2m}$ (if $\lfloor \varepsilon n\rfloor$ is even) or $\widetilde g=g_{2m+1}$ (if $\lfloor \varepsilon n\rfloor$ is odd).
It is easy to see that $\log_2 g_k$ is $\eta$--Lipschitz (because the log-Lipschitzness
is preserved under taking convex combinations) and $\|g_k\|_1=1$ for all admissible $k$.

For each $i\leq m$, define events
$$\Event_i:=
\Big\{\big|\big\{t\in I:\;g_i(t)\geq H\big\}\big|\leq \mu N\quad \mbox{ for any integer interval $I$ of cardinality $N$}\Big\},
$$
and
$$\widetilde \Event_i:=
\Big\{\mbox{$\|g_i\|_2^2\leq \|g_{i-1}\|_2^2
-c_{\text{\tiny\ref{prop: ell 2 update}}} p(1-p)\mu^6 \widetilde R^2 n^{-1}N^{-1}
$}\Big\}
$$
(we can formally extend the first definition to $i=0$).
Clearly, for each $i$, $\Event_i$ and $\widetilde \Event_i$
are measurable w.r.t the sigma-algebra generated by $X_1,\dots,X_i$.
Condition for a moment on any realization of $X_1,\dots,X_{i-1}$, and observe that one of the following two assertions is true:
\begin{itemize}
\item $\Event_{i-1}$ holds;
\item $\big|\big\{t\in I:\;g_i(t)\geq 8R\big\}\big|\geq \mu N$ for some integer interval $I$ of cardinality $N$,
where we set $R:=\frac{\widetilde R}{N\sqrt{n}}$.
Then, applying Proposition~\ref{prop: ell 2 update}, we get
$\Prob_{X_i}(\widetilde \Event_i)\geq 1-C_{\text{\tiny\ref{prop: ell 2 update}}}\mu$.
\end{itemize}
Hence,
$$\Prob\big(\Event_{i-1}\cup\widetilde \Event_i\;\vert\;X_1,\dots,X_{i-1}\big)\geq 1-C_{\text{\tiny\ref{prop: ell 2 update}}}\mu,
\quad i=1,2,\dots,m.$$
This implies that for any $r\in[m]$, the probability that $\big(\Event_{i-1}\cup\widetilde \Event_i\big)^c$
holds for at least $r$ indices $i$ can be estimated as
$$\Prob\Big(
\bigcup_{S\subset[m],|S|=r}\;\bigcap_{i\in S}\big(\Event_{i-1}\cup\widetilde \Event_i\big)^c\Big)
\leq {m\choose r}\big(C_{\text{\tiny\ref{prop: ell 2 update}}}\mu\big)^r.
$$
Note that the definition of $g_k$'s and the triangle inequality imply that the sequence $\big(\|g_k\|_2\big)_{k\geq 0}$
is non-increasing. Hence, taking $r:=\lceil m/2\rceil$ in the above formula
and in view of our choice of $\mu$, we get that with probability at least $1-\exp(-2M n)$
at least one of the following two conditions is satisfied:
\begin{itemize}
\item[(a)] 
There is $i\leq m$ such that
$\big|\big\{t\in I:\;g_i(t)\geq H\big\}\big|\leq \mu N$ for any integer interval $I$ of cardinality $N$; or
\item[(b)]
$\|g_m\|_2^2\leq \|g\|_2^2
-c_{\text{\tiny\ref{prop: ell 2 update}}} p(1-p) m\mu^6 \widetilde R^2 n^{-1}N^{-1}/2
$.
\end{itemize}
It can be checked, however, that condition (b) is improbable.
Indeed, in view of the restrictions on the $\ell_1$-- and $\ell_\infty$--norms of $g$,
and H\" older's inequality,
$$
\|g\|_2^2\leq 1\cdot \frac{L}{N\sqrt{n}},
$$
whence, applying \eqref{eq: aux 209823059325-2}, we get
$\|g\|_2^2-c_{\text{\tiny\ref{prop: ell 2 update}}} p(1-p) m\mu^6 \widetilde R^2 n^{-1}N^{-1}/2<0$.

Thus, only (a) may hold, so the event
\begin{align*}
\Event:=\big\{
&\mbox{There is $i\leq m$ such that
$\big|\big\{t\in I:\;g_i(t)\geq H\big\}\big|\leq \mu N$}\\
&\mbox{for any integer interval $I$ of cardinality $N$}
\big\}
\end{align*}
has probability at least $1-\exp(-2Mn)$.
Applying Lemma~\ref{l: aux 07395875092835}
we get that everywhere on the event
\begin{equation}\label{eq: aux 09502875605824}
\big|\big\{t\in I:\;g_i(t)\geq \sqrt{2}H\big\}\big|\leq 3\mu N\mbox{ for any interval $I$ of cardinality $N$ and $i\geq m+1$}.
\end{equation}

\medskip

The second part of our proof resembles the proof of Proposition~\ref{p: rough decay},
although the argument here is simpler.
We observe that there exists a random sequence of integers $(t_{i})_{i=m}^{2m}$
satisfying
\begin{itemize}
\item The sequence $\big(g_i(t_i)\big)_{i=m}^{2m}$ is non-increasing;
\item $g_{2m}(t_{2m})=\|g_{2m}\|_\infty$;
\item $t_i\in\{t_{i-1},t_{i-1}-X_i\}$ for all $m<i\leq 2m$.
\end{itemize}
On the event
$$\hat\Event:=\big\{\|g_{\lfloor \varepsilon n\rfloor}\|_\infty\geq (\sqrt{2}p+2(1-p))H\big\}$$
we necessarily have $\|g_{i}\|_\infty\geq (\sqrt{2}p+2(1-p))H$, $i\leq 2m$,
hence, in view of the recursive relation
$g_i(t_i)=p\,g_{i-1}(t_i+X_i)+(1-p)g_{i-1}(t_i)$ and the deterministic upper bound $\|g_{i-1}\|_\infty\leq 2H$,
we have
$g_{i-1}(t_i+X_i)\geq \sqrt{2}H$ and $g_{i-1}(t_i)\geq \sqrt{2}H$ for all $m<i\leq 2m$.
Thus,
$$\hat\Event \subset\big\{
\mbox{$g_{i-1}(t_i+X_i)\geq \sqrt{2}H$ and $g_{i-1}(t_i)\geq \sqrt{2}H$ for all $m<i\leq 2m$}
\big\}.$$
We will show that the probability of the latter event is small by considering a union bound over non-random
sequences.

Fix any realizations $X_1^0,\dots, X_m^0$ of $X_1,\dots,X_m$ such that the event $\Event$ defined above holds.
Take any non-random sequence $(v_i)_{i=m+1}^{2m}\in\{0,1\}^m$ and any fixed $s_m\in\Z$
such that $g_m(s_m)\geq \sqrt{2}H$ (if such $s_m$ exists). Further, we define random numbers $s_i:= s_{i-1}-v_i X_i$, $i=m+1,\dots,2m$.
Then for any $i\geq m+1$ we have
\begin{align*}
\Prob\big\{&\mbox{$g_{i-1}(s_i+X_i)\geq \sqrt{2}H$ and $g_{i-1}(s_i)\geq \sqrt{2}H$}\;\vert\;
X_1=X_1^0,\dots, X_m=X_m^0;X_{m+1},\dots,X_{i-1}\big\}\\
&=
\Prob\big\{\mbox{$g_{i-1}(s_{i-1}+(1-v_i)X_i)\geq \sqrt{2}H$ and }\\
&\hspace{1cm}\mbox{$g_{i-1}(s_{i-1}-v_i X_i)\geq \sqrt{2}H$}
\;\vert\;
X_1=X_1^0,\dots, X_m=X_m^0;X_{m+1},\dots,X_{i-1}\big\}\\
&\leq 
\Prob\big\{\mbox{$g_{i-1}(s_{i-1}+X_i)\geq \sqrt{2}H$ or }\\
&\hspace{1cm}\mbox{$g_{i-1}(s_{i-1}- X_i)\geq \sqrt{2}H$}
\;\vert\;
X_1=X_1^0,\dots, X_m=X_m^0;X_{m+1},\dots,X_{i-1}\big\}\\
&\leq 2\cdot 2\cdot 3\mu,
\end{align*}
in view of \eqref{eq: aux 09502875605824} and our assumption about the distribution of $X_i$'s.
Hence,
$$\Prob\big\{\mbox{$g_{i-1}(s_i+X_i)\geq \sqrt{2}H$ and $g_{i-1}(s_i)\geq \sqrt{2}H$ for all $m<i\leq 2m$}
\;\vert\;
X_1=X_1^0,\dots, X_m=X_m^0\big\}$$
is at most $(12\mu)^m$.
This, together with the obvious observation $|\{s\in\Z:\;g_m(s)\geq \sqrt{2}H\}|\leq (\sqrt{2}H)^{-1}$,
allows to estimate the probability of $\hat \Event$ as
$$\Prob(\hat\Event)\leq \Prob(\Event^c)+2^m (\sqrt{2}H)^{-1} (12\mu)^m\leq \exp(-2M n)+2^m (\sqrt{2}H)^{-1} (12\mu)^m.$$
By our definition of the parameters $\mu,H,m$, the rightmost quantity is less than $\exp(-Mn)$
for all sufficiently large $n$.
The proof is complete.
\end{proof}

\bigskip

\begin{proof}[Proof of Theorem~\ref{th: averaging}]
Fix any admissible parameters $\delta\in(0,1]$, $p\in(0,1/2]$, $\varepsilon\in(0,p)$, $K,M\geq 1$.
The proof of the theorem is essentially a combination of Proposition~\ref{p: rough decay}
which provides a rough bound on the $\ell_\infty$--norm which depends on $M$,
and subsequent application of Proposition~\ref{prop: refinement} to get
a refined bound.

We define
$$L:=L_{\text{\tiny\ref{p: rough decay}}}(2M,p,\delta,\varepsilon/2);\quad
\widetilde R:=\frac{C_{\text{\tiny\ref{l: aux simple ac}}}}{\sqrt{\min(\delta,1/2)\,p}},$$
and let $q$ be the smallest positive integer such that $\big(p/\sqrt{2}+1-p\big)^q \leq L^{-1}$.
Further, define $\alpha=\alpha(p,\varepsilon)$ as the smallest number in $[1/2,1)$ which satisfies
$$(1-p+\varepsilon)^{1-\alpha}\geq\bigg(\frac{1-p+\varepsilon/2}{1-p+\varepsilon}\bigg)^{1/4},$$
and set $\widetilde \varepsilon:=(1-\alpha)/(2q)$.
Now, we fix any $n$ satisfying
\begin{align*}
&\min(\delta,\widetilde \varepsilon,1/2)n\geq 1;
&n^{\frac{1}{2n}}\leq \bigg(\frac{1-p+\varepsilon}{1-p+\varepsilon/2}\bigg)^{1/4};\\
&n\geq n_{\text{\tiny\ref{prop: refinement}}}\big(p,\widetilde\varepsilon,\max(16\widetilde R,L),\widetilde R,2M\big);
&n\geq n_{\text{\tiny\ref{p: rough decay}}}(2M,p,\delta,\varepsilon/2),
\end{align*}
fix $1\leq N\leq (1-p+\varepsilon)^{-n}$,
and define $\ell:=\lceil \alpha n\rceil$. It can be checked that with the above assumptions on parameters,
we have $(1-p+\varepsilon/2)^{\ell}\leq (1-p+\varepsilon)^n/\sqrt{n}$.

Further, we fix any non-negative function $f\in\ell_1(\Z)$ with $\|f\|_1=1$ and such that
$\log_2 f$ is $\eta$--Lipschitz
for $\eta=\eta_{\text{\tiny\ref{prop: refinement}}}(p,\widetilde\varepsilon,\max(16\widetilde R,L),\widetilde R,2M)$.
Note that, by the above, $(1-p+\varepsilon/2)^{\ell}\,\|f\|_\infty\leq L(N\sqrt{n})^{-1}$,
and, by Proposition~\ref{p: rough decay}, the event
$$\Event_{\text{\tiny\ref{p: rough decay}}}:=\big\{\|f_{\mathcal A,p,\ell}\|_\infty\leq
L(N\sqrt{n})^{-1}\big\}$$
has probability at least $1-\exp(-2Mn)$.

Further, we split the integer interval $\{\ell,\ell+1,\dots,n\}$ into $q$ subintervals, each of cardinality
at least $\frac{n- \alpha n}{2q}= \widetilde \varepsilon n$.
Let $\ell\leq i_1<i_2<\dots<i_q=n$ be the right endpoints of corresponding subintervals.
Observe that by Lemma~\ref{l: aux simple ac}, for any $k\geq \ell$ and any integer
interval $I$ of cardinality $N$ we have deterministic relation
$$\sum\limits_{t\in I}f_{\mathcal A,p,k}(t)\leq \frac{C_{\text{\tiny\ref{l: aux simple ac}}}}{\sqrt{\min(\delta,1/2) n\,p}}
=\frac{\widetilde R}{\sqrt{n}},$$
by our definition of $R$.
This enables us to apply Proposition~\ref{prop: refinement}.
Applying Proposition~\ref{prop: refinement} to the first subinterval, we get that,
conditioned on the event $\Event_0:=\Event_{\text{\tiny\ref{p: rough decay}}}$, the event
$$\Event_1:=\Big\{\mbox{$\|f_{\mathcal A,p,i_1}\|_\infty\leq \frac{\max(16\widetilde R,(p/\sqrt{2}+1-p)L)}{N\sqrt{n}}$}\Big\}$$
has probability at least $1-\exp(-2Mn)$. More generally, for the $j$-th subinterval,
the application of Proposition~\ref{prop: refinement} gives
$$\Prob\big(\Event_j\;\vert\;\Event_{j-1}\big)\geq 1-\exp(-2Mn),$$
where for each $1\leq j\leq q$,
$$\Event_j:=\Big\{\mbox{$\|f_{\mathcal A,p,i_j}\|_\infty\leq \frac{\max(16\widetilde R,(p/\sqrt{2}+1-p)^j L)}{N\sqrt{n}}$}\Big\}.$$
Taking into account our definition of $q$,
$$\Event_q=\Big\{\mbox{$\|f_{\mathcal A,p,n}\|_\infty\leq \frac{16\widetilde R}{N\sqrt{n}}$}\Big\}.$$
In view of the above, the probability of this event can be estimated from below by $1-(q+1)\exp(-2Mn)$, which is greater than $1-\exp(-Mn)$
for all suffificently large $n$.
It remains to choose
$$L_{\text{\tiny\ref{th: averaging}}}:=16\widetilde R.$$
\end{proof}

\section{Proof of Theorem~A}

Let us recall the definition of a threshold which we considered in Section~\ref{s: strategy}.
For any $p\in(0,1/2]$, any vector $x\in S^{n-1}$ and any parameter $L>0$ we define the {\it threshold}
$\thres_p(x,L)$ as the supremum of all $t\in(0,1]$ such that
$\cf\big(\sum_{i=1}^n b_i x_i,t\big)> Lt$,
where $b_1,\dots,b_n$ are independent Bernoulli($p$) random variables.
Note that $\thres_p(x,L)\geq\frac{1}{L} (1-p)^n$.
On the other hand, as a consequence of the L\'evy--Kolmogorov--Rogozin
inequality (Lemma~\ref{l: lkr}), we obtain
\begin{lemma}\label{l: threshold}
For every $p\in(0,1/2]$, $\delta,\nu\in(0,1]$ there are $K_{\text{\tiny\ref{l: threshold}}}
=K_{\text{\tiny\ref{l: threshold}}}(p,\delta,\nu)>0$ and
$L_{\text{\tiny\ref{l: threshold}}}=L_{\text{\tiny\ref{l: threshold}}}(p,\delta,\nu)\geq 1$
with the following property. Let $n\geq 2$, $L\geq L_{\text{\tiny\ref{l: threshold}}}$,
and let $x\in\incomp_n(\delta,\nu)$. Then $\thres_p(x,L)\leq \frac{K_{\text{\tiny\ref{l: threshold}}}}{\sqrt{n}}$.
\end{lemma}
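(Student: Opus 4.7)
The plan is to exploit the standard spread property of incompressible vectors and feed it into the L\'evy--Kolmogorov--Rogozin inequality (Lemma~\ref{l: lkr}).

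First, I would recall (or derive in one line by a pigeonhole argument) that any $x\in\incomp_n(\delta,\nu)$ admits a ``spread'' subset of coordinates: there exist $\delta'=\delta'(\delta,\nu)\in(0,\delta]$ and $\nu'=\nu'(\delta,\nu)\in(0,\nu]$ such that
$$I(x):=\big\{i\leq n:\;|x_i|\geq \nu'/\sqrt{n}\big\}\quad\mbox{satisfies}\quad |I(x)|\geq \delta' n.$$
Indeed, if the complement of $I(x)$ had more than $(1-\delta')n$ indices, the vector obtained by zeroing those small coordinates would be $\delta' n$--sparse and within Euclidean distance $\sqrt{\nu'^2 (1-\delta')}$ of $x$; choosing $\nu'$ and $\delta'$ so that this quantity is at most $\nu$ contradicts incompressibility (once one also accounts for the truncation of very large coordinates, which can be handled by a parallel pigeonhole step on the $\ell_2$-mass).

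Next, I would estimate the one-dimensional concentration functions for the good indices. For $i\in I(x)$ the random variable $b_ix_i$ takes two values $0$ and $x_i$, separated by $|x_i|\geq \nu'/\sqrt{n}$. Set $r_i:=\nu'/(2\sqrt{n})$. Since the two atoms are at distance $>2r_i$, any interval of radius $r_i$ contains at most one of them, so
$$\cf\big(b_i x_i,r_i\big)=\max(p,1-p)=1-p,$$
using $p\leq 1/2$. Hence $1-\cf(b_ix_i,r_i)\geq p$ for each $i\in I(x)$.

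Now apply Lemma~\ref{l: lkr} to the independent summands $\{b_i x_i\}_{i\in I(x)}$ (ignoring the other summands only weakens the bound). For any $r\geq r_i=\nu'/(2\sqrt{n})$,
$$\cf\Big(\sum_{i=1}^n b_ix_i,r\Big)\leq \frac{C_{\text{\tiny\ref{l: lkr}}}\,r}{\sqrt{\sum_{i\in I(x)}(1-\cf(b_ix_i,r_i))\,r_i^2}}\leq \frac{C_{\text{\tiny\ref{l: lkr}}}\,r}{\sqrt{\delta' n\cdot p\cdot(\nu')^2/(4n)}}=:L_0\,r,$$
where $L_0=L_0(p,\delta,\nu)=2C_{\text{\tiny\ref{l: lkr}}}/(\nu'\sqrt{\delta' p})$ depends only on $p,\delta,\nu$.

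Finally, I would set $L_{\text{\tiny\ref{l: threshold}}}:=\max(L_0,1)$ and $K_{\text{\tiny\ref{l: threshold}}}:=\nu'/2$. For any $L\geq L_{\text{\tiny\ref{l: threshold}}}$ and any $t>K_{\text{\tiny\ref{l: threshold}}}/\sqrt{n}=\nu'/(2\sqrt{n})$, the displayed inequality above yields $\cf\big(\sum b_ix_i,t\big)\leq L_0\,t\leq Lt$, so such $t$ are excluded from the set defining the threshold. Therefore $\thres_p(x,L)\leq K_{\text{\tiny\ref{l: threshold}}}/\sqrt{n}$, as desired. The only nontrivial step is the spread property of incompressible vectors; everything else is a direct application of Lemma~\ref{l: lkr}, so I expect no serious obstacle.
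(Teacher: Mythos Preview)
Your proposal is correct and follows essentially the same route as the paper: extract a spread set of $\Theta(n)$ coordinates bounded below by $c/\sqrt{n}$ from incompressibility, then feed the resulting single-summand bound $\cf(b_ix_i,r_i)\leq 1-p$ into the L\'evy--Kolmogorov--Rogozin inequality. The paper obtains the spread set slightly more directly---it takes $I$ to be the indices of the $\lfloor\delta n\rfloor$ largest coordinates and notes that incompressibility forces $\|x\,\mathbf{1}_{[n]\setminus I}\|_2\geq\nu$, so some coordinate outside $I$ (hence every coordinate inside $I$) is at least $\nu/\sqrt{n}$---which lets one work with $\delta,\nu$ themselves rather than auxiliary $\delta',\nu'$; your parenthetical about ``truncation of very large coordinates'' is unnecessary here since only the lower bound on $|x_i|$ is used.
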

\begin{proof}
Take any vector $x\in\incomp_n(\delta,\nu)$, and let $I\subset[n]$ be a
subset of cardinality $\lfloor \delta n\rfloor$ corresponding to the largest (by absolute value) coordinates of
$x$, i.e.\ such that $|x_i|\geq |x_\ell|$ for all $i\in I$ and $\ell\in[n]\setminus I$.
Since $x$ is $(\delta,\nu)$--incompressible, we have
$\|x\,{\bf 1}_{[n]\setminus I}\|_2\geq \nu$, whence there is $\ell\in [n]\setminus I$
such that $|x_\ell|\geq \nu/\sqrt{n}$. Thus, $|x_i|\geq \nu/\sqrt{n}$ for all $i\in I$.
Applying Lemma~\ref{l: lkr}, we get
$$\cf\Big(\sum\limits_{i= 1}^n b_i x_i,\mbox{$\frac{\nu t}{\sqrt{n}}$}\Big)
\leq \cf\Big(\sum\limits_{i\in I}b_i x_i,\mbox{$\frac{\nu t}{\sqrt{n}}$}\Big)=
\cf\Big(\mbox{$\frac{\sqrt{n}}{\nu}$}\sum\limits_{i\in I}b_i x_i,t\Big)\leq \frac{Ct}{\sqrt{\lfloor \delta n\rfloor}}$$
for all $t\geq 1$ for some $C\geq 1$ depending only on $p$.
It remains to choose $L_{\text{\tiny\ref{l: threshold}}}:=\frac{C}{\nu\sqrt{\delta/2}}$
and $K_{\text{\tiny\ref{l: threshold}}}:=\max\big(\delta^{-1/2},\nu\big)$.
The result follows by the definition of the threshold.
\end{proof}
\begin{Remark}
The above lemma can also be obtained by applying results of \cite{RV adv},
namely, the property that the least common denominator of an incompressible vector is of order at least $\sqrt{n}$.
\end{Remark}

\medskip

Let us discuss what is left in order to complete the proof of Theorem~A.
The standard decomposition of $S^{n-1}$ into sets of compressible and incompressible vectors
and the reduction of invertibility over the incompressible vectors to the distance problem
for the random normal (see description in Section~\ref{s: strategy}),
leave the following question: given a number $T\gg (1-p+\varepsilon)^n$,
show that the probability of the event $\{\thres_p(Y_n,L)\in [T,2T)\}$ is close to zero.
Here, $Y_n$ is a unit normal vector to the first $n-1$ columns of the matrix $B_n(p)+s\,1_n1_n^\top$.
Assuming that $\Net_T$ is a discrete approximation of the set of incompressible vectors with the threshold in $[T,2T)$,
we can write
$$
\Prob\big\{\thres_p(Y_n,L)\in [T,2T)\big\}
\leq |\Net_T|\,\max\limits_{x\in\Net_T}\Prob\big\{\mbox{$x$ is ``almost orthogonal'' to $\Col_1,\dots,\Col_{n-1}$}\big\}
$$
(we prefer not to specify at this stage what ``almost orthogonal'' means quantitatively).
Most of the work related to estimating the cardinality of $\Net_T$ was done in Section~\ref{s: averaging}.
Here, we combine Corollary~\ref{cor: anticoncentration} with a simple counting argument
giving an estimate of the cardinality of a part of the integer lattice $\Z^n$ with prescribed bounds on
the vector coordinates (see Corollary~\ref{cor: permutations} in this section).
The probability estimate for the event
$$
\big\{\mbox{$x$ is ``almost orthogonal'' to $\Col_1,\dots,\Col_{n-1}$}\big\}
$$
would follow as a simple consequence of the Tensorization Lemma~\ref{l: tensorization}
and individual small ball probability bounds for $\langle x,\Col_i\rangle$.
Note that if the threshold of the vector $x$ was contained in the range $[0,C\,T)$,
such estimates would immediately follow from the definition of the threshold. However, the vector $x\in\Net_T$
is only an approximation of another vector with a small threshold. Thus, to make the conclusion,
we will need a statement which asserts that for a given vector one can find its lattice approximation which
preserves (to some extent) the anticoncentration properties of the corresponding random linear combination:

\begin{lemma}\label{l: magic vector}
Let $p\in(0,1/2]$, let $y=(y_1,\dots,y_n)\in\R^n$ be a vector and $L>0$, $\lambda\in\R$ be numbers such that
for mutually independent Bernoulli($p$) random variables $b_1,\dots,b_n$ we have
$\Prob\{\big|\sum_{i=1}^n b_i y_i-\lambda\big|\leq t\}\leq Lt$ for all $t\geq\sqrt{n}$.
Then there exists a vector $y'=(y_1',\dots,y_n')\in\Z^n$ having the following properties
\begin{itemize}
\item $\|y-y'\|_\infty\leq 1$;
\item
$\Prob\big\{\big|\sum_{i=1}^n b_i y_i'-\lambda\big|\leq t\big\}
\leq C_{\text{\tiny\ref{l: magic vector}}}\,L t$ for all $t\geq \sqrt{n}$;
\item $\cf\big(\sum_{i=1}^n b_i y_i',\sqrt{n}\big)\geq c_{\text{\tiny\ref{l: magic vector}}}\,\cf\big(\sum_{i=1}^n b_i y_i,\sqrt{n}\big)$;
\item $\big|\sum_{i=1}^n y_i-\sum_{i=1}^n y_i'\big|\leq C_{\text{\tiny\ref{l: magic vector}}}\sqrt{n}$.
\end{itemize}
Here,
$C_{\text{\tiny\ref{l: magic vector}}},c_{\text{\tiny\ref{l: magic vector}}}>0$
are universal constants.
\end{lemma}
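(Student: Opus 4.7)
The plan is randomized rounding. Let $\xi_1,\dots,\xi_n$ be independent $\{0,1\}$-valued random variables, on a space independent of $b=(b_1,\dots,b_n)$, with $\Prob\{\xi_i=1\}=\{y_i\}:=y_i-\lfloor y_i\rfloor$, and set $y'_i:=\lfloor y_i\rfloor+\xi_i$. Then the first property holds deterministically. The fourth property reduces to a Chebyshev estimate on $\sum_i(\xi_i-\{y_i\})$, which is centered with variance at most $n/4$, so $|\sum_i(y_i-y'_i)|>C\sqrt{n}$ has probability at most $1/(4C^2)$.

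For the remaining two properties it is convenient to write $\sum_i b_i y'_i=\sum_i b_i y_i+W$, where $W:=\sum_i b_i(\xi_i-\{y_i\})$ is mean-zero in $\xi$ conditional on $b$, with conditional variance at most $|b|/4\leq n/4$. For the third property, pick $\lambda^*$ nearly attaining the supremum in $q:=\cf(\sum_i b_iy_i,\sqrt{n})$. Conditioning on $|W|\leq K\sqrt{n}$ inside the expectation gives
$$\Exp_\xi\,\Prob_b\big\{\big|\sum_i b_iy'_i-\lambda^*\big|\leq(K+1)\sqrt{n}\big\}\geq q\,(1-1/K^2)$$
for an absolute constant $K$; as the inner probability is bounded by one, Markov's inequality produces an event in $\xi$ of probability at least $q/2$ on which the inner probability is at least $q/2$. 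Splitting $[-(K+1)\sqrt{n},(K+1)\sqrt{n}]$ into $O(K)$ subintervals of length $2\sqrt{n}$ and applying the pigeonhole principle extracts a $\lambda$ with $\Prob_b\{|\sum_i b_iy'_i-\lambda|\leq\sqrt{n}\}\geq cq$ for an absolute $c>0$, giving the third property.

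The second property is the main obstacle. The naive averaging bound
$$\Exp_\xi\,\Prob_b\big\{\big|\sum_ib_iy'_i-\lambda\big|\leq t\big\}\leq L(t+K\sqrt{n})+\Prob_{b,\xi}\{|W|>K\sqrt{n}\}$$
leaves a residual $\lesssim\exp(-cK^2)$ from the second term (by Hoeffding) that cannot be absorbed into $CLt$ by any universal $K$, since $L$ can be arbitrarily small. My plan to overcome this is to discretize $t$ dyadically as $t_k:=2^k\sqrt{n}$ (only $O(\log(1/L))$ scales matter before $CLt_k\geq 1$) and, at each scale, replace the Chebyshev tail of $W$ by a sharper conditional L\'evy-Kolmogorov-Rogozin estimate $\cf(W\,|\,b,r)\leq Cr/\sqrt{n}$, which holds on the typical event $|b|\geq pn/2$ provided that a constant fraction of the coordinates satisfy $\min(\{y_i\},1-\{y_i\})\geq c_0$. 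The complementary case where almost every $y_i$ lies within $1/\sqrt n$ of $\Z$ is instead handled by deterministic nearest-integer rounding, under which $|\sum_i b_i(y_i-y'_i)|\ll\sqrt n$ holds automatically and the first three properties drop out. A union bound over the $O(\log(1/L))$ scales and over the previously treated events leaves a positive probability of a simultaneous realization, from which the existence of the desired deterministic $y'$ (with universal constants $C_{\text{\tiny\ref{l: magic vector}}},c_{\text{\tiny\ref{l: magic vector}}}$) follows.
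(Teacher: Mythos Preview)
Your randomized-rounding setup and the handling of the first and fourth properties match the paper. The third property has a subtle issue: from $\Exp_\xi Z\geq q(1-1/K^2)$ with $Z\in[0,1]$, reverse Markov only gives $\Prob_\xi\{Z\geq q/2\}\gtrsim q$, so your ``good'' $\xi$-event has probability of order $q$, which in the application is exponentially small in $n$; you then cannot intersect it with the events for the other properties. The fix is to bound the \emph{loss} $\Prob_b\{|\sum b_iy_i-\lambda^*|\leq\sqrt n,\ |\sum b_iy'_i-\lambda^*|>2\sqrt n\}$ instead of the gain: its $\xi$-expectation is at most $q/4$ by Chebyshev on $W$, so Markov gives $\Prob_\xi\{\text{loss}>3q/4\}\leq 1/3$, an event of constant probability on which property~3 holds.

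The more serious gap is in the second property. Your LKR estimate $\cf(W\mid b,r)\leq Cr/\sqrt n$ ignores the hypothesis on $y$ entirely; averaging over $b$ yields only $\Exp_\xi\Prob_b\{|\sum b_iy'_i-\lambda|\leq t\}\leq Ct/\sqrt n$, so Markov at scale $t_k$ gives a failure probability of order $1/(L\sqrt n)$. Since $L$ can be exponentially small in $n$ (in the application $L\asymp\thres_p(x,\cdot)/\sqrt n$), neither this nor a union bound over $O(\log(1/L))$ scales closes. Your fallback case is also broken: the complement of ``a constant fraction of $i$ have $\{y_i\}\in[c_0,1-c_0]$'' only says most $y_i$ are within $c_0$ of $\Z$, not within $1/\sqrt n$, so nearest-integer rounding leaves $|\sum_ib_i(y_i-y'_i)|$ of order $c_0 n$, not $o(\sqrt n)$.

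What the paper does instead is a dyadic-shell decomposition of the ``far'' region: partition $\{(v_i):|\sum v_iy_i-\lambda|>2t\}$ into shells where $|\sum v_iy_i-\lambda|\in(2^jt,2^{j+1}t]$; on the $j$-th shell the $b$-measure is at most $L\,2^{j+1}t$ by hypothesis, while for such $(v_i)$ Chebyshev on $W_v$ (not LKR) gives $\Prob_\xi\{|\sum v_iy'_i-\lambda|\leq t\}\leq n/(2^{j-1}t)^2$. Summing over $j$ yields $\Exp_\xi[\text{excess at }t]\leq CLn/t$, and Markov at $t_k=2^k\sqrt n$ then gives failure probability $O(4^{-k})$, which sums over all $k\geq 0$ to a constant strictly below $1$---independently of $L$. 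The point is that the hypothesis on $y$ and the Chebyshev tail on $W$ must be \emph{combined} shell by shell; your LKR bound throws away the former.
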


The first and the last property of $y'$ will be used to estimate the Euclidean norm of
$(B_n(p)+s\,1_n1_n^\top)(y-y')$:
the bound on $\|y-y'\|_\infty$ provides control of $\|(B_n(p)-p\,1_n1_n^\top)(y-y')\|_2$ while
the relation $\big|\sum_{i=1}^n y_i-\sum_{i=1}^n y_i'\big|\leq C_{\text{\tiny\ref{l: magic vector}}}\sqrt{n}$
implies $\big\|(s+p)\,1_n1_n^\top(y-y')\big\|_2\leq C_{\text{\tiny\ref{l: magic vector}}}|s+p|n$.

The proof of Lemma~\ref{l: magic vector} is based on a well known concept of the {\it randomized rounding} \cite{RRounding}
(see also \cite{AK,KlLiv,Livshyts} for some recent applications). 
The first use of this method in the context of matrix invertibility is, to the best of author's knowledge,
due to G.Livshyts \cite{Livshyts}. In \cite{Livshyts}, the randomized rounding is used to choose
a best lattice approximation for a vector, which in turn is applied to construction of $\varepsilon$--nets;
our work follows the same principle.
We note that, unlike \cite{Livshyts}, in the present paper we need to explicitly control the L\'evy
concentration function and the small ball probability estimates for the approximating vector
(the second and the third property in the statement).

\begin{proof}[Proof of Lemma~\ref{l: magic vector}]
Fix a vector $y\in\R^n$, and let $b_1,\dots,b_n$ be independent Bernoulli($p$) random variables.
Further, let $\xi_1,\dots,\xi_n$ be random variables jointly independent with $b_1,\dots,b_n$,
such that for each $i\leq n$, $\xi_i$ takes values $\lfloor y_i\rfloor$ and $\lfloor y_i\rfloor+1$
with probabilities $\lfloor y_i\rfloor+1-y_i$ and $y_i-\lfloor y_i\rfloor$, respectively
(so that $\Exp\,\xi_i=y_i$).
Define random vector $\widetilde y:=(\xi_1,\dots,\xi_n)$, and observe that with probability one
$\|y-\widetilde y\|_\infty\leq 1$.

Fix for a moment any $w>0$ and denote by $S(2w)$ the collection of all
$(v_i)_{i=1}^n\in\{0,1\}^n$ such that $\big|\sum_{i=1}^n v_i y_i-\lambda\big|> 2w$.
Take any $(v_i)_{i=1}^n\in S(2w)$.
Note that $\sum_{i=1}^n v_i (y_i-\widetilde y_i)$ is the sum of independent variables, each of mean zero
and variance at most $1/4$.
Hence, by Markov's inequality,
$$\Prob\Big\{\Big|\sum_{i=1}^n v_i \widetilde y_i-\lambda\Big|\leq w\Big\}
\leq \Prob\Big\{\Big|\sum_{i=1}^n v_i (y_i-\widetilde y_i)\Big|> w\Big\}
\leq \frac{n}{4w^2}.$$
Thus, if $\widetilde S(w)$ is the (random) collection of all vectors $(v_i)_{i=1}^n\in\{0,1\}^n$
such that $\big|\sum_{i=1}^n v_i \widetilde y_i-\lambda\big|> w$ then the above estimate immediately implies
for an arbitrary subset $E\subset\{0,1\}^n$:
\begin{align*}
\Exp \sum\limits_{(v_i)_{i=1}^n\in (S(2w)\setminus E)\setminus \widetilde S(w)}p^{\sum_i v_i}(1-p)^{n-\sum_i v_i}
&=\Exp 
\,\Exp_b\,{\bf 1}_{\{(b_i)_{i=1}^n\in (S(2w)\setminus E)\setminus \widetilde S(w)\}}\\
&\leq \frac{n}{4w^2}\, 
\Exp_b\,{\bf 1}_{\{(b_i)_{i=1}^n\in S(2w)\setminus E\}}\\
&= \frac{n}{4w^2} \sum\limits_{(v_i)_{i=1}^n\in S(2w)\setminus E}p^{\sum_i v_i}(1-p)^{n-\sum_i v_i}.
\end{align*}
We take $E=S(4w)$ in the above relation
and apply it for $w=2^{j-1} t$, $j\geq 1$, so that
\begin{align*}
\Exp \sum\limits_{(v_i)_{i=1}^n\in S(2t)\setminus \widetilde S(t)}
p^{\sum_i v_i}(1-p)^{n-\sum_i v_i}
&=
\Exp\,\sum\limits_{j=1}^\infty\sum\limits_{(v_i)_{i=1}^n\in
(S(2^j t)\setminus S(2^{j+1}t))\setminus \widetilde S(t)}
p^{\sum_i v_i}(1-p)^{n-\sum_i v_i}\\
&\leq
\Exp\,\sum\limits_{j=1}^\infty\sum\limits_{(v_i)_{i=1}^n\in
(S(2^j t)\setminus S(2^{j+1}t))\setminus \widetilde S(2^{j-1}t)}
p^{\sum_i v_i}(1-p)^{n-\sum_i v_i}\\
&\leq \sum\limits_{j=1}^\infty\frac{n}{2^{2j} t^2}
 \sum\limits_{(v_i)_{i=1}^n\in S(2^j t)\setminus S(2^{j+1}t)}p^{\sum_i v_i}(1-p)^{n-\sum_i v_i}\\
&\leq \sum\limits_{j=1}^\infty \frac{n\,L\,2^{j+1}t}{2^{2j} t^2}\\
&= \frac{2L\,n}{t}
\end{align*}
for any $t\geq \sqrt{n}$,
where we have used that, by the assumption on $y$,
$$
\sum\limits_{(v_i)_{i=1}^n\in
S(2^j t)\setminus S(2^{j+1}t)}
p^{\sum_i v_i}(1-p)^{n-\sum_i v_i}
\leq \Prob\Big\{\Big|\sum_{i=1}^n b_i y_i-\lambda\Big|\leq 2^{j+1}t\Big\}\leq
L\,2^{j+1}t.$$
The relation implies that for all $t\geq \sqrt{n}$,
\begin{align*}
\Exp &\max\bigg(0,\sum\limits_{(v_i)_{i=1}^n\in \{0,1\}^n\setminus \widetilde S(t)}
p^{\sum_i v_i}(1-p)^{n-\sum_i v_i}-
\sum\limits_{(v_i)_{i=1}^n\in \{0,1\}^n\setminus S(2t)}
p^{\sum_i v_i}(1-p)^{n-\sum_i v_i}\bigg)\\
&\leq \frac{2L\,n}{t}.
\end{align*}
An application of Markov's inequality, with $t=\sqrt{n},2\sqrt{n},4\sqrt{n},\dots$,
gives
\begin{align*}
\Prob\Big\{&\mbox{There exists integer $k\geq 0$ such that}\\
&\sum\limits_{(v_i)_{i=1}^n\in \{0,1\}^n}
p^{\sum_i v_i}(1-p)^{n-\sum_i v_i}{\bf 1}_{\{
|\sum_{i=1}^n v_i \widetilde y_i-\lambda|\leq 2^k\sqrt{n}
\}}
\geq 2^{3}L\,2^k\sqrt{n}\\
&\hspace{4cm}+\sum\limits_{(v_i)_{i=1}^n\in \{0,1\}^n\setminus S(2^{1+k}\sqrt{n})}
p^{\sum_i v_i}(1-p)^{n-\sum_i v_i}
\Big\}\leq \frac{1}{4}\sum\limits_{k=0}^\infty 2^{-2k}<\frac{7}{16}.
\end{align*}
Together with the condition on the small ball probability of random sums $\sum_{i=1}^n b_i y_i-\lambda$,
this implies that there is an event $\Event_1$ measurable with respect to $\widetilde y$ and with $\Prob(\Event_1)>9/16$
such that for any realization $\widetilde y^0$ of $\widetilde y$ from $\Event_1$,
$$\Prob\Big\{\Big|\sum_{i=1}^n b_i \widetilde y_i-\lambda\Big|\leq t\;\big\vert\;\widetilde y=\widetilde y^0\Big\}
\leq CL\quad\mbox{for all $t\geq\sqrt{n}$},$$
for some universal constant $C>0$.

Further, we will derive lower bounds on the anticoncentration function of the sum $\sum_{i=1}^n b_i \widetilde y_i$.
The argument is very similar to the one above, and we will skip some details.
Let $\lambda'\in\R$ be a number such that
$$\cf\Big(\sum_{i=1}^n b_i y_i,\sqrt{n}\Big)=
\sum\limits_{(v_i)_{i=1}^n\in \{0,1\}^n\setminus S_{\lambda'}(\sqrt{n})}
p^{\sum_i v_i}(1-p)^{n-\sum_i v_i},$$
where
$$S_{\lambda'}(\sqrt{n}):=\Big\{(v_i)_{i=1}^n\in\{0,1\}^n:\;
\Big|\sum_{i=1}^n v_i y_i-\lambda'\Big|> \sqrt{n}\Big\}.
$$
Further, denote
$$\widetilde S_{\lambda'}(2\sqrt{n}):=\Big\{(v_i)_{i=1}^n\in \{0,1\}^n:\;
\Big|\sum_{i=1}^n v_i \widetilde y_i-\lambda'\Big|> 2\sqrt{n}\Big\}.$$
Take any $(v_i)_{i=1}^n\in \{0,1\}^n\setminus S_{\lambda'}(\sqrt{n})$.
Since the variance of the random sum $\sum_{i=1}^n v_i (y_i-\widetilde y_i)$ is at most $n/4$,
we get
\begin{align*}
\Prob\Big\{\Big|\sum_{i=1}^n v_i \widetilde y_i-\lambda'\Big|> 2\sqrt{n}\Big\}
\leq \Prob\Big\{\Big|\sum_{i=1}^n v_i (y_i-\widetilde y_i)\Big|>\sqrt{n}\Big\}
\leq \frac{1}{4}.
\end{align*}
Hence,
$$
\Exp\,\sum\limits_{(v_i)\in (\{0,1\}^n\setminus S_{\lambda'}(\sqrt{n}))\cap \widetilde S_{\lambda'}(2\sqrt{n})}
p^{\sum_i v_i}(1-p)^{n-\sum_i v_i}
\leq
\frac{1}{4}
\sum\limits_{(v_i)\in \{0,1\}^n\setminus S_{\lambda'}(\sqrt{n})}
p^{\sum_i v_i}(1-p)^{n-\sum_i v_i},$$
so that with probability at least $2/3$ we have
\begin{equation}\label{eq: aux 2074450573}
\sum\limits_{(v_i)\in (\{0,1\}^n\setminus S_{\lambda'}(\sqrt{n}))\cap \widetilde S_{\lambda'}(2\sqrt{n})}
p^{\sum_i v_i}(1-p)^{n-\sum_i v_i}
\leq
\frac{3}{4}
\sum\limits_{(v_i)\in \{0,1\}^n\setminus S_{\lambda'}(\sqrt{n})}
p^{\sum_i v_i}(1-p)^{n-\sum_i v_i}.
\end{equation}
Denote by $\Event_2$ the event that
\eqref{eq: aux 2074450573} holds
(observe that the event is measurable with respect to $\widetilde y$).
Note that for any realization $\widetilde y^0$ of $\widetilde y$ from the event $\Event_2$, we have
\begin{align*}
\sum\limits_{(v_i)\in \{0,1\}^n}
p^{\sum_i v_i}(1-p)^{n-\sum_i v_i}{\bf 1}_{\{
|\sum_{i=1}^n v_i \widetilde y_i^0-\lambda'|\leq 2\sqrt{n}
\}}
&\geq \frac{1}{4}\sum\limits_{(v_i)\in \{0,1\}^n\setminus S_{\lambda'}(\sqrt{n})}
p^{\sum_i v_i}(1-p)^{n-\sum_i v_i}\\
&=\frac{1}{4}\cf\Big(\sum_{i=1}^n b_i y_i,\sqrt{n}\Big).
\end{align*}
This immediately implies
$$\cf\Big(\sum_{i=1}^n b_i \widetilde y_i^0,\sqrt{n}\Big)\geq \frac{1}{8}
\cf\Big(\sum_{i=1}^n b_i y_i,\sqrt{n}\Big).$$

As the last step of the proof, we note that since the variance of the sum $\sum_{i=1}^n (y_i-\widetilde y_i)$
is at most $n/4$, there is an event $\Event_3$ measurable with respect to $\widetilde y$ and of probability at
least $37/48$ such that everywhere on $\Event_3$, $\big|\sum_{i=1}^n (y_i-\widetilde y_i)\big|\leq
\sqrt{12n/11}$.

Finally, since $3-\Prob(\Event_1)-\Prob(\Event_2)-\Prob(\Event_3)<1$,
there exists a realization $y'$ of the random vector $\widetilde y$ from the intersection $\Event_1\cap\Event_2\cap \Event_3$.
It is straightforward to check that $y'$ satisfies all conditions of the lemma.
\end{proof}

\medskip

Given any $p\in(0,1/2]$, $s\in[-1,0]$, any $x\in S^{n-1}$ and $L\geq 1$, we construct integer vector $\spv(p,x,L,s)\in\Z^n$ as follows:
take $y=(y_1,\dots,y_n):=\frac{\sqrt{n}}{\thres_p(x,L)}\,x$ and observe that, by the definition of the threshold,
$$\Prob\Big\{\Big|\sum_{i=1}^n b_i y_i+s\sum_{i=1}^n y_i
\Big|\leq t\Big\}\leq \frac{L\,\thres_p(x,L)}{\sqrt{n}}\,t\;\;\mbox{ for all }t\geq\sqrt{n}.$$
Hence, by Lemma~\ref{l: magic vector}, there is a vector $\spv(p,x,L,s)\in\Z^n$ satisfying
\begin{itemize}\label{p: prop yxl}
\item $\big\|\frac{\sqrt{n}}{\thres_p(x,L)}\,x-\spv(p,x,L,s)\big\|_\infty
\leq 1$;
\item
$\Prob\big\{\big|\sum_{i=1}^n b_i \spv_i(p,x,L,s)+\frac{s\sqrt{n}}{\thres_p(x,L)}\sum_{i=1}^n x_i\big|\leq t\big\}$
$
\leq \frac{C_{\text{\tiny\ref{l: magic vector}}}\,L\,\thres_p(x,L)}{\sqrt{n}}\, t$ for all $t\geq \sqrt{n}$;
\item $\cf\big(\sum_{i=1}^n b_i \spv_i(p,x,L,s),\sqrt{n}\big)
\geq 
c_{\text{\tiny\ref{l: magic vector}}}\,L\,\thres_p(x,L)$;
\item $\big|\frac{\sqrt{n}}{\thres_p(x,L)}\sum_{i=1}^n x_i-\sum_{i=1}^n \spv_i(p,x,L,s)\big|
\leq C_{\text{\tiny\ref{l: magic vector}}} \sqrt{n}$.
\end{itemize}
The vector with the above properties does not have to be unique,
however, from now on we fix a single admissible vector for each $4$--tuple $(p,x,L,s)$.

\medskip

\begin{lemma}\label{l: special permutations}
For any $n\geq 2$ there is a subset $\bf\Pi$ of permutations on $[n]$ with
$|{\bf\Pi}|\leq C_{\text{\tiny\ref{l: special permutations}}}^n$, having the following property.
Let $p\in(0,1/2]$, $\delta\in(0,1/2]$, $s\in[-1,0]$, $\nu\in(0,1]$, $L\geq 1$, and let $x\in\incomp_n(\delta,\nu)$.
Then there is $\sigma=\sigma(x)\in\bf\Pi$ such that the vector
$\widetilde y=\big(\spv_{\sigma(i)}(p,x,L,s)\big)_{i=1}^n$ satisfies
$$|\widetilde y_i|> \frac{\nu}{\thres_p(x,L)}-1\quad\mbox{ for all }i\leq\delta n,$$
and
$$|\widetilde y_i|\leq
\frac{2^{(j+1)/2}}{\sqrt{\delta}\,\thres_p(x,L)}+1,\quad  i> 2^{-j}\delta n,\quad 0\leq j\leq \log_2(\delta n).$$
Here, $C_{\text{\tiny\ref{l: special permutations}}}>0$ is a universal constant.
\end{lemma}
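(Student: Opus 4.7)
The plan is to build ${\bf\Pi}$ by parameterizing permutations via a dyadic partition of the position space $[n]$, augmented by a single ``cut'' subset inside each block that records where the threshold $\lfloor\delta n\rfloor$ falls.

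First, introduce dyadic position blocks $B_0=\{1\}$ and $B_j=\{2^{j-1}+1,\dots,\min(2^j,n)\}$ for $j=1,\dots,K:=\lceil\log_2 n\rceil$. For every ordered partition $(A_0,\dots,A_K)$ of $[n]$ with $|A_j|=|B_j|$ and every family of subsets $A_j'\subset A_j$ (one per block), include in ${\bf\Pi}$ the canonical permutation $\sigma_{(A_j),(A_j')}$ that maps $B_j$ bijectively onto $A_j$, sending the first $|A_j'|$ positions of $B_j$ (in increasing order) onto $A_j'$ (in increasing order of element) and the remaining positions of $B_j$ onto $A_j\setminus A_j'$ (again in increasing order). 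Different pairs $((A_j),(A_j'))$ produce different elements of ${\bf\Pi}$. A standard Stirling/entropy estimate bounds the multinomial $\binom{n}{|B_0|,\dots,|B_K|}$ by $4^n$ (the dyadic profile $(|B_j|/n)_j$ has entropy $O(1)$), while the subset refinement contributes $\prod_j 2^{|B_j|}=2^n$ extra permutations per partition. Consequently $|{\bf\Pi}|\le 8^n$, and one may take $C_{\text{\tiny\ref{l: special permutations}}}=8$.

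Given $x\in\incomp_n(\delta,\nu)$, choose any $\tau$ with $|x_{\tau(1)}|\ge\dots\ge|x_{\tau(n)}|$, and set $A_j:=\tau(B_j)$, $k_j:=\min(|B_j|,\max(0,\lfloor\delta n\rfloor-2^{j-1}))$, and $A_j':=\tau((2^{j-1},2^{j-1}+k_j])$. By construction $\sigma([1,\lfloor\delta n\rfloor])=\tau([1,\lfloor\delta n\rfloor])$. For the lower bound, if $i\le\lfloor\delta n\rfloor$ then $\sigma(i)=\tau(i')$ for some $i'\le\lfloor\delta n\rfloor$, and the definition of $(\delta,\nu)$-incompressibility forces $|x_{\sigma(i)}|>\nu/\sqrt n$, so, using the property $\|\sqrt n\,x/\thres_p(x,L)-\spv(p,x,L,s)\|_\infty\le 1$ from Lemma~\ref{l: magic vector},
$$|\widetilde y_i|\ge\frac{\sqrt n}{\thres_p(x,L)}\,|x_{\sigma(i)}|-1>\frac{\nu}{\thres_p(x,L)}-1.$$
For the upper bound, if $i\in B_j$ then $\sigma(i)=\tau(i')$ with $i'>2^{j-1}$, and the elementary inequality $i'|x_{\tau(i')}|^2\le\|x\|_2^2=1$ gives $|x_{\tau(i')}|<\sqrt 2\cdot 2^{-j/2}$, hence $|\widetilde y_i|<\sqrt 2\cdot 2^{-j/2}\sqrt n/\thres_p(x,L)+1$; a short verification shows this is at most $2^{(m+1)/2}/(\sqrt\delta\,\thres_p(x,L))+1$ for every integer $m\in[0,\log_2(\delta n)]$ with $i>2^{-m}\delta n$.

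The main obstacle is the unique block $B_{j^*}$ that straddles $\lfloor\delta n\rfloor$, i.e.\ with $2^{j^*-1}<\lfloor\delta n\rfloor<2^{j^*}$. A coarser construction based solely on the equivalence $\sigma\sim\tau\Longleftrightarrow\sigma(B_j)=\tau(B_j)$ for all $j$ would leave undetermined which elements of $A_{j^*}=\tau(B_{j^*})$ land in positions $\le\lfloor\delta n\rfloor$; consequently, indices of magnitude rank $>\lfloor\delta n\rfloor$ could end up there, and the lower bound $\nu/\thres_p(x,L)-1$ would fail, because there is no a priori lower bound on $|x_{\tau(i')}|$ when $i'>\lfloor\delta n\rfloor$. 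The subset refinement $A_j'$ is precisely the combinatorial data needed to encode this split in a $(\tau,\delta)$-adapted manner, and the overhead factor $\prod_j 2^{|B_j|}=2^n$ is modest enough to preserve the bound $|{\bf\Pi}|\le 8^n$ with a universal constant.
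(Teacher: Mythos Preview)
Your proof is correct, but it takes a genuinely different route from the paper's argument, and the comparison is worth spelling out.

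The paper builds $\mathbf{\Pi}$ from a \emph{top-down} nested chain: fixing $j_0$ as the largest integer with $2^{j_0}\le\delta n$, it enumerates all chains $[n]\supset I_0\supset\dots\supset I_{j_0}$ with $|I_j|=\lfloor 2^{-j}\delta n\rfloor$ and picks one permutation per chain with $\sigma([\lfloor 2^{-j}\delta n\rfloor])=I_j$. This is conceptually simpler (no subset refinement) but the resulting $\mathbf{\Pi}$ literally depends on $\delta$, which sits uneasily with the lemma's quantifier order; one would have to take a union over the finitely many values of $\lfloor\delta n\rfloor$ to match the statement exactly. Your construction is \emph{bottom-up}: the dyadic blocks $B_j$ depend only on $n$, so $\mathbf{\Pi}$ is genuinely universal in $\delta$, and the subset refinement $A_j'$ is exactly the extra combinatorial data needed to record where $\lfloor\delta n\rfloor$ cuts the straddling block. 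The price is the extra factor $2^n$ in the cardinality and a slightly more involved verification, but both are harmless. Your entropy estimate for the multinomial and the check that $2^{j+m}>\delta n$ whenever $i\in B_j$ and $i>2^{-m}\delta n$ are both fine.

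One small inaccuracy: the sentence ``Different pairs $((A_j),(A_j'))$ produce different elements of $\mathbf{\Pi}$'' is false (e.g.\ when $\sigma|_{B_j}$ is globally increasing, several splits $A_j'$ yield the same map). This does not affect your bound, since you only need $|\mathbf{\Pi}|\le(\text{number of pairs})\le 8^n$; just drop the injectivity claim.
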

\begin{proof}
If $\delta n<1$ then the statement is empty, and $\bf\Pi$ can be chosen arbitrarily.
We will therefore assume that $\delta n\geq 1$.
We start by defining the collection of permutations $\bf\Pi$.
Let $j_0\geq 0$ be the largest integer such that $\delta n\geq 2^{j_0}$.
For every collection of subsets $[n]\supset I_0\supset\dots\supset I_{j_0}$
with $|I_j|=\lfloor 2^{-j}\delta n\rfloor$, $j=0,\dots,j_0$,
take any permutation $\sigma$ such that
$\sigma\big(\big[\lfloor 2^{-j}\delta n\rfloor\big]\big)=I_j$, $j=0,\dots,j_0$.
We then compose $\bf\Pi$ of all such permutations
(where we pick a single admissible permutation for every collection of subsets).
It is not difficult to check that the total number of admissible collections $[n]\supset I_0\supset\dots\supset I_{j_0}$,
hence the cardinality of $\bf\Pi$, is bounded above by $C^n$
for a universal constant $C>0$.

It remains to check the properties of $\bf\Pi$.
Take any vector $x\in \incomp_n(\delta,\nu)$, and let $[n]\supset I_0(x)\supset\dots\supset I_{j_0}(x)$
be sets of indices corresponding to largest (by absolute value) coordinates of $x$.
Namely, $I_j(x)$ is a subset of cardinality $\lfloor 2^{-j}\delta n\rfloor$ such that
$|x_i|\geq |x_\ell|$ for all $i\in I_j(x)$ and $\ell\in [n]\setminus I_j(x)$.
Let $\sigma\in\bf\Pi$ be a permutation such that
$$\sigma\big(\big[\lfloor 2^{-j}\delta n\rfloor\big]\big)=I_j(x), \quad j=0,\dots,j_0.$$
Set $\widetilde y:=\big(\spv_{\sigma(i)}(p,x,L,s)\big)_{i=1}^n$.

By our construction, $|x_{\sigma(i)}|\geq |x_{\sigma(\ell)}|$ for all $i\leq \delta n<\ell$.
Since $x$ is incompressible,
$$\sum\limits_{\ell>\delta n}x_{\sigma(\ell)}^2\geq\nu^2,$$
whence there exists an index $\ell>\delta n$ such that $|x_{\sigma(\ell)}|> \nu/\sqrt{n}$.
Thus, $|x_{\sigma(i)}|> \nu/\sqrt{n}$ for all $i\leq \delta n$, whence, in view of the definition of vector $\widetilde y$,
$$|\widetilde y_i|> \frac{\nu}{\thres_p(x,L)}-1\quad\mbox{ for all }i\leq\delta n.$$

The upper bounds on coordinates $\widetilde y_i$ are obtained in a similar fashion.
Take any $j\in\{0,\dots,j_0\}$.
Since $|x_{\sigma(i)}|\leq |x_{\sigma(\ell)}|$ for all $\ell\leq 2^{-j}\delta n<i$, and $x$ has Euclidean norm one, we get
$$|x_{\sigma(i)}|\leq \frac{1}{\sqrt{\lfloor 2^{-j}\delta n\rfloor}},\quad i> 2^{-j}\delta n.$$
Hence,
$$|\widetilde y_i|\leq \frac{1}{\sqrt{\lfloor 2^{-j}\delta n\rfloor}}\frac{\sqrt{n}}{\thres_p(x,L)}+1\leq
\frac{2^{(j+1)/2}}{\sqrt{\delta}\,\thres_p(x,L)}+1,\quad  i> 2^{-j}\delta n.$$
\end{proof}

Let $n\geq 2$, $\delta\in[1/n,1/2]$ and $\nu\in(0,1]$. Further, let $T\in(0,1]$ be a number such that
$$\frac{\nu}{T}\geq 2.$$
Define a subset $\mathcal A(n,\delta,\nu,T)\subset\Z^n$ as follows:
we take $\mathcal A(n,\delta,\nu,T)=A_1\times A_2\times\dots\times A_n$,
where
\begin{itemize}

\item For all $1\leq j\leq \log_2(\delta n)$ and $2^{-j}\delta n<i\leq 2^{-j+1}\delta n$, we have
$$A_i:=\Z\cap\,\Big[-\Big\lceil\mbox{$\frac{2^{(j+3)/2}}{\sqrt{\delta}\,T}$}\Big\rceil-1,
\Big\lceil\mbox{$\frac{2^{(j+3)/2}}{\sqrt{\delta}\,T}$}\Big\rceil+1\Big]\setminus
\Big[1-\Big\lfloor\frac{\nu}{T}\Big\rfloor,\Big\lfloor\frac{\nu}{T}\Big\rfloor-1\Big];$$

\item For $i>\delta n$, we have
$$A_i:=\Z\cap 
\,\Big[-\Big\lceil\mbox{$\frac{\sqrt{8}}{\sqrt{\delta}\,T}$}\Big\rceil-1,
\Big\lceil\mbox{$\frac{\sqrt{8}}{\sqrt{\delta}\,T}$}\Big\rceil+1\Big];
$$

\item $A_1:=\Z\cap\,\Big[-\Big\lceil\frac{2\sqrt{n}}{T}\Big\rceil-1,
\Big\lceil\frac{2\sqrt{n}}{T}\Big\rceil+1\Big]\setminus
\Big[1-\Big\lfloor\frac{\nu}{T}\Big\rfloor,\Big\lfloor\frac{\nu}{T}\Big\rfloor-1\Big]$.

\end{itemize}

Lemma~\ref{l: special permutations} immediately implies
\begin{cor}\label{cor: permutations}
For any $n\geq 2$ there is a subset $\bf\Pi$ of permutations on $[n]$ with
$|{\bf\Pi}|\leq C_{\text{\tiny\ref{l: special permutations}}}^n$, having the following property.
Let $p\in(0,1/2]$, $\delta\in[1/n,1/2]$, $s\in[-1,0]$, $\nu\in(0,1]$, $L\geq 1$, $T>0$,
and let $x\in\incomp_n(\delta,\nu)$ be such that $T/2\leq \thres_p(x,L)\leq T$.
Then there is $\sigma=\sigma(x)\in\bf\Pi$ such that the vector
$\big(\spv_{\sigma(i)}(p,x,L,s)\big)_{i=1}^n$ belongs to $\mathcal A(n,\delta,\nu,T)$.
\end{cor}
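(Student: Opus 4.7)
The plan is to reuse the collection $\bf\Pi$ from Lemma~\ref{l: special permutations} verbatim, so that the cardinality bound $|\bf\Pi| \leq C_{\text{\tiny\ref{l: special permutations}}}^n$ transfers automatically, and then to verify coordinate by coordinate that the permuted vector $\widetilde y := \big(\spv_{\sigma(i)}(p,x,L,s)\big)_{i=1}^n$ produced by the lemma lies in each factor $A_i$ of $\mathcal A(n,\delta,\nu,T)$. The only additional ingredient is the two-sided sandwich $T/2 \leq \thres_p(x,L) \leq T$, which converts the estimates of Lemma~\ref{l: special permutations} (stated in terms of $\thres_p(x,L)$) into estimates in terms of $T$, at the cost of a multiplicative factor of $2$.

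For the lower bound, whenever $i \leq \delta n$, Lemma~\ref{l: special permutations} gives
$$|\widetilde y_i| > \nu/\thres_p(x,L) - 1 \geq \nu/T - 1;$$
since $\widetilde y_i \in\Z$, this forces $|\widetilde y_i| \geq \lfloor \nu/T \rfloor$, which is exactly the statement that $\widetilde y_i$ avoids the central gap $[1-\lfloor \nu/T\rfloor,\lfloor \nu/T\rfloor-1]$ present in $A_1$ and in those $A_i$ with $2^{-j}\delta n < i \leq 2^{-j+1}\delta n$. For the upper bounds I apply the lemma with the matching dyadic index: in the tail regime $i > \delta n$ (taking $j=0$) one has
$$|\widetilde y_i| \leq \sqrt{2}/(\sqrt{\delta}\thres_p(x,L)) + 1 \leq \sqrt{8}/(\sqrt{\delta}T) + 1,$$
while on the intermediate scale $2^{-j}\delta n < i \leq 2^{-j+1}\delta n$ with $1 \leq j \leq \log_2(\delta n)$ one has
$$|\widetilde y_i| \leq 2^{(j+1)/2}/(\sqrt{\delta}\thres_p(x,L)) + 1 \leq 2^{(j+3)/2}/(\sqrt{\delta}T) + 1.$$
Each right-hand side is bounded above by the corresponding $\lceil\,\cdot\,\rceil + 1$ appearing in $A_i$, settling all these coordinates.

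The only index not covered above is $i=1$, as no $j$ in the admissible range $0\leq j \leq \log_2(\delta n)$ satisfies $1 > 2^{-j}\delta n$. For this coordinate I fall back to the first defining property of $\spv(p,x,L,s)$ listed on page~\pageref{p: prop yxl}, namely $\|\tfrac{\sqrt{n}}{\thres_p(x,L)} x - \spv(p,x,L,s)\|_\infty \leq 1$; combined with $\|x\|_\infty \leq 1$ and $\thres_p(x,L) \geq T/2$, this yields $|\widetilde y_1| \leq \sqrt{n}/\thres_p(x,L) + 1 \leq 2\sqrt{n}/T + 1 \leq \lceil 2\sqrt{n}/T\rceil + 1$, and combined with the lower bound of the previous paragraph this places $\widetilde y_1 \in A_1$. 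There is no genuine obstacle in this argument; it is a direct verification, and the only points demanding care are propagating the factor of $2$ between the $\thres_p(x,L)$-bounds of the lemma and the $T$-bounds in the definition of $\mathcal A(n,\delta,\nu,T)$, and treating the first coordinate through the $\ell_\infty$ estimate rather than through the dyadic family.
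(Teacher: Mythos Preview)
Your proposal is correct and follows exactly the approach the paper intends: the paper states only that ``Lemma~\ref{l: special permutations} immediately implies'' the corollary, and your argument supplies precisely the omitted verification --- reusing the same $\bf\Pi$, converting the $\thres_p(x,L)$--bounds of the lemma into $T$--bounds via $T/2\leq\thres_p(x,L)\leq T$, and handling $i=1$ separately through the defining $\ell_\infty$--property of $\spv$.
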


The next crucial observation, which will enable us to apply results from Section~\ref{s: averaging},
is
\begin{lemma}\label{l: admissibility of A}
For any $\delta\in(0,1/2]$, $\nu\in(0,1]$ there are
$n_{\text{\tiny\ref{l: admissibility of A}}}=n_{\text{\tiny\ref{l: admissibility of A}}}(\delta,\nu)\geq 1$ and
$K_{\text{\tiny\ref{l: admissibility of A}}}=K_{\text{\tiny\ref{l: admissibility of A}}}(\delta,\nu)\geq 1$
with the following property. Take any $n\geq n_{\text{\tiny\ref{l: admissibility of A}}}$,
$T\in(0,\nu/2]$ and set $N:=\big\lfloor\frac{\nu}{T}\big\rfloor-1$. Then
the subset $\mathcal A(n,\delta,\nu,T)$ defined above is $(N,n,K_{\text{\tiny\ref{l: admissibility of A}}},\delta)$--admissible
(with the notion taken from Section~\ref{s: averaging}).
\end{lemma}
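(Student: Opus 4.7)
My plan is to verify the five clauses of $(N,n,K,\delta)$-admissibility in turn. The product structure of $\mathcal A(n,\delta,\nu,T)$ and the origin-symmetry of each $A_i$ are immediate from the construction. The interval structure (a single integer interval for $i>\delta n$, a union of two integer intervals for $i\le\delta n$) is also immediate once one checks that the excluded middle $[1-\lfloor\nu/T\rfloor,\lfloor\nu/T\rfloor-1]=[-N,N]$ sits inside the outer interval; this reduces, via the assumption $\delta\le 1/2$, $\nu\le 1$, to the inequality $\sqrt{8}/\sqrt{\delta}\ge 2\nu$ for indices $2\le i\le\delta n$, and to $2\sqrt{n}>\nu$ for $A_1$. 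The disjointness $A_i\cap[-N,N]=\emptyset$ for $i\le\delta n$ is tautological.

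The cardinality lower bounds reduce to direct comparisons. For $i>\delta n$ one has $|A_i|=2\lceil\sqrt{8}/(\sqrt{\delta}T)\rceil+3$, and $|A_i|\ge 2N+1$ again follows from $\sqrt{8}/\sqrt{\delta}\ge 2\nu$. For $2\le i\le\delta n$ in the shell $2^{-j}\delta n<i\le 2^{-j+1}\delta n$, one has $|A_i|=2\lceil 2^{(j+3)/2}/(\sqrt{\delta}T)\rceil-2N+2\ge 2N$ by the same estimate, and the analogous inequality for $A_1$ holds once $n$ is large. The bound $\max A_i<nN$ is also routine: the worst case is $i=1$, where $\max A_1\le 2\sqrt{n}/T+2$, and using $1/T\le (N+2)/\nu\le 3N/\nu$ (a consequence of $T\le\nu/2$ and $N\ge 1$) this is $\le C_\nu\sqrt{n}\,N$, which is $<nN$ for $n\ge n_{\text{\tiny\ref{l: admissibility of A}}}(\nu)$.

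The substantive step is the product bound $\prod_i|A_i|\le(KN)^n$ with $K=K(\delta,\nu)$. To keep $K$ free of $N$, I normalize at the outset: the estimate $1/T\le 3N/\nu$ gives the uniform ratios $|A_i|/N\le C_{\delta,\nu}$ for $i>\delta n$, $|A_i|/N\le C_{\delta,\nu}\,2^{j/2}$ for $i$ in shell $j\in\{1,\dots,j_0\}$ with $j_0:=\lfloor\log_2(\delta n)\rfloor$, and $|A_1|/N\le C_\nu\sqrt{n}$. Writing $\log\prod|A_i|=n\log N+\sum_i\log(|A_i|/N)$ reduces the target to $\sum_i\log(|A_i|/N)\le n\log K$. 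Letting $n_j:=\lfloor 2^{-j+1}\delta n\rfloor-\lfloor 2^{-j}\delta n\rfloor\le 2\cdot 2^{-j}\delta n$ denote the number of indices in shell $j$, the geometric identities $\sum_{j\ge 1}2^{-j}=1$ and $\sum_{j\ge 1}j\,2^{-j}=2$ yield the shell bound
\[
\sum_{j=1}^{j_0} n_j\Big(\log C_{\delta,\nu}+\tfrac{j}{2}\log 2\Big)\le\delta n\log C_{\delta,\nu}+2\delta n\log 2.
\]
Combining with the $(1-\delta)n\log C_{\delta,\nu}$ contribution from $i>\delta n$ and the $\log(C_\nu\sqrt{n})$ from $|A_1|$, the total is at most $n\log C'_{\delta,\nu}+\tfrac12\log n+O(1)$, which is $\le n\log K$ for a suitable $K=K(\delta,\nu)$ once $n$ is large enough. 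The only delicate observation is that normalization by $N$ is essential: a bound that retained $\log N$ in the per-index estimates would leave a $\log N\cdot\log n$ residue from the $|A_1|$ factor combined with the logarithmically many shells, and this cannot be absorbed into $n\log K$ because $N$ is permitted to grow exponentially in $n$. Once the ratios $|A_i|/N$ are computed, they depend only on $\delta,\nu$ and the shell index, and the rest is a bookkeeping exercise.
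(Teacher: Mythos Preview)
Your verification is correct and is precisely the routine check the paper leaves to the reader: the lemma is stated without proof there. One small imprecision worth noting is your claim that ``the worst case is $i=1$'' for the bound $\max A_i<nN$. In fact the deepest shell $j=j_0=\lfloor\log_2(\delta n)\rfloor$ gives $\max A_i\le 2^{(j_0+3)/2}/(\sqrt{\delta}\,T)+2\le 2^{3/2}\sqrt{n}/T+2$, which slightly exceeds $\max A_1\le 2\sqrt{n}/T+2$; however both are $O(\sqrt{n}/T)=O(\sqrt{n}\,N/\nu)$, so your conclusion $\max A_i<nN$ for large $n$ stands unchanged. Your closing remark about the ``$\log N\cdot\log n$ residue'' is a bit muddled---normalizing by $N$ is bookkeeping convenience rather than a genuine obstruction, since $\sum_i\log|A_i|$ and $n\log(KN)$ both carry the term $n\log N$ which cancels either way---but this does not affect the argument.
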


Now, everything is ready to prove the main result of the paper.
\begin{proof}[Proof of Theorem~A]
Fix any $p\in(0,1/2]$, $\varepsilon\in(0,p/2]$, and assume that $n\geq n_{\text{\tiny\ref{l: compress}}}(\varepsilon,p)$
and $\sqrt{n}\geq 2K_{\text{\tiny\ref{l: threshold}}}/\nu_{\text{\tiny\ref{l: compress}}}(\varepsilon,p)$
(we will impose additional restrictions on $n$ as the proof goes on).
Fix any $s\in[-1,0]$.
Our goal is to estimate from above
$$
\Prob\big\{s_{\min}(B_n(p)+s\,1_n1_n^\top)\leq t/\sqrt{n}\big\},
$$
for any $t>0$.
Set
$$\delta:=\delta_{\text{\tiny\ref{l: compress}}}(\varepsilon,p),\quad \nu:=
\nu_{\text{\tiny\ref{l: compress}}}(\varepsilon,p);\quad \gamma:=\gamma_{\text{\tiny\ref{l: compress}}}(\varepsilon,p).$$
Applying formula \eqref{eq: comp-incomp} and Proposition~\ref{l: compress}, we get for any
$t\leq \gamma n$:
$$\Prob\big\{s_{\min}(B_n(p)+s\,1_n1_n^\top)\leq t/\sqrt{n}\big\}
\leq
\big(1-p+\varepsilon\big)^{n}
+\frac{1}{\delta}\Prob\big\{|\langle\Col_n(B_n(p)+s\,1_n1_n^\top),Y_n\rangle|\leq t/\nu\big\},
$$
where $Y_n$ is a unit random vector measurable with respect to $\Col_1(B_n(p)),\dots,\Col_{n-1}(B_n(p))$
and orthogonal to $\spn\{\Col_1(B_n(p)+s\,1_n1_n^\top),\dots,\Col_{n-1}(B_n(p)+s\,1_n1_n^\top)\}$.
Applying Proposition~\ref{l: compress} the second time, we obtain that the event
$\big\{Y_n\in\comp_n(\delta,\nu)\big\}$
has probability at most $\big(1-p+\varepsilon\big)^{n}$.
Further, for every vector $x\in \incomp_n(\delta,\nu)$, according to Lemma~\ref{l: threshold},
$\thres_p(x,L)\leq \frac{K_{\text{\tiny\ref{l: threshold}}}}{\sqrt{n}}$ whenever $L\geq L_{\text{\tiny\ref{l: threshold}}}$.
Set
$$L:=\max\bigg(L_{\text{\tiny\ref{l: threshold}}},\frac{4L_{\text{\tiny\ref{cor: anticoncentration}}}}{c_{\text{\tiny\ref{l: magic vector}}}\nu}
\bigg).$$
Then, in view of the above, we have
\begin{align*}
\Prob\big\{s_{\min}(B_n(p)+s\,1_n1_n^\top)\leq t/\sqrt{n}\big\}
&\leq
\big(1+\delta^{-1}\big)\big(1-p+\varepsilon\big)^{n}\\
+\frac{1}{\delta}
\sum\limits_{j=0}^\infty
\Prob\Big\{&\mbox{$Y_n\in \incomp_n(\delta,\nu)$ and $|\langle \Col_n(B_n(p)+s\,1_n1_n^\top),Y_n\rangle|\leq t/\nu$}\\
&\mbox{and $\frac{2^{-j-1}K_{\text{\tiny\ref{l: threshold}}}}{\sqrt{n}}<
\thres_p(Y_n,L)\leq \frac{2^{-j} K_{\text{\tiny\ref{l: threshold}}}}{\sqrt{n}}$}\Big\}.
\end{align*}
Further, for any $j\geq 0$,
using the independence of $Y_n$ and $\Col_n(B_n(p)+s\,1_n1_n^\top)$ and the definition of the threshold,
we can write
\begin{align*}
\Prob\Big\{&\mbox{$|\langle \Col_n(B_n(p)+s\,1_n1_n^\top),Y_n\rangle|\leq t/\nu$
and $\frac{2^{-j-1}K_{\text{\tiny\ref{l: threshold}}}}{\sqrt{n}}<
\thres_p(Y_n,L)\leq \frac{2^{-j} K_{\text{\tiny\ref{l: threshold}}}}{\sqrt{n}}$}\Big\}\\
&\hspace{3cm}\leq \mbox{$L\,\max\big(\frac{2^{-j} K_{\text{\tiny\ref{l: threshold}}}}{\sqrt{n}},\frac{t}{\nu}\big)$}
\,\Prob\Big\{\mbox{$\frac{2^{-j-1}K_{\text{\tiny\ref{l: threshold}}}}{\sqrt{n}}<
\thres_p(Y_n,L)\leq \frac{2^{-j} K_{\text{\tiny\ref{l: threshold}}}}{\sqrt{n}}$}\Big\}.
\end{align*}
Hence, for every $t\leq\gamma n$,
\begin{align*}
\Prob\big\{&s_{\min}(B_n(p)+s\,1_n1_n^\top)\leq t/\sqrt{n}\big\}\\
&\leq
\big(1+\delta^{-1}\big)\big(1-p+\varepsilon\big)^{n}
+\frac{L}{\delta}\max\bigg(\frac{(1-p+\varepsilon)^{n} K_{\text{\tiny\ref{l: threshold}}}}{\sqrt{n}},\frac{t}{\nu}\bigg)\\
&\hspace{1cm}+\frac{1}{\delta}
\sum\limits_{j=0}^{\lfloor -n\,\log_2(1-p+\varepsilon)\rfloor}
\Prob\Big\{\mbox{$Y_n\in \incomp_n(\delta,\nu)$ and $\frac{2^{-j-1}K_{\text{\tiny\ref{l: threshold}}}}{\sqrt{n}}<
\thres_p(Y_n,L)\leq \frac{2^{-j} K_{\text{\tiny\ref{l: threshold}}}}{\sqrt{n}}$}\Big\}.
\end{align*}
Fix any $j\in\{0,1,\dots,\lfloor -n\,\log_2(1-p+\varepsilon)\rfloor\}$ and
set $T:=\frac{2^{-j} K_{\text{\tiny\ref{l: threshold}}}}{\sqrt{n}}$ and
$$\mbox{$N:=\Big\lfloor\frac{\nu}{T}\Big\rfloor-1;\;
\mathcal A:=\mathcal A(n,\delta,\nu,T);\;
M:=\log\big(8 (C+C_{\text{\tiny\ref{l: magic vector}}})C_{\text{\tiny\ref{l: tensorization}}}
C_{\text{\tiny\ref{l: magic vector}}}(1+C_{\text{\tiny\ref{l: magic vector}}})C_{\text{\tiny\ref{l: special permutations}}}\,
L_{\text{\tiny\ref{l: admissibility of A}}}L \nu\big)$},$$
where $C>0$ denotes the constant such that
$$\Prob\big\{\|B_n^1(p)-p\,1_{n-1}1_n^\top\|\geq C\sqrt{n}\big\}\leq 2^{-n}$$
(which exists, according to Lemma~\ref{l: sp norm}).
Further, let ${\bf\Pi}$ be the set of permutations from Corollary~\ref{cor: permutations}.
Take any $x\in\incomp_n(\delta,\nu)$ such that
$T/2<\thres_p(x,L)\leq T$.
Then the vector $\spv(p,x,L,s)$ satisfies (see page~\pageref{p: prop yxl})
\begin{itemize}
\item[(a)] $\big\|\frac{\sqrt{n}}{\thres_p(x,L)}\,x-\spv(p,x,L,s)\big\|_\infty\leq 1$;
\item[(b)]
$\Prob\big\{\big|\sum_{i=1}^n b_i \,\spv_i(p,x,L,s)+s\,\frac{\sqrt{n}}{\thres_p(x,L)}\sum_{i=1}^n x_i\big|\leq \tau\big\}
\leq \frac{C_{\text{\tiny\ref{l: magic vector}}}\,L\,T}{\sqrt{n}}\, \tau$ for all $\tau\geq \sqrt{n}$;
\item[(c)] $\cf\big(\sum_{i=1}^n b_i \,\spv_i(p,x,L,s),\sqrt{n}\big)
\geq c_{\text{\tiny\ref{l: magic vector}}}\,L\,\thres_p(x,L)     
\geq \frac{c_{\text{\tiny\ref{l: magic vector}}}}{2}LT\geq \frac{c_{\text{\tiny\ref{l: magic vector}}}L\nu}{4N}$;
\item[(d)] $\big|\sum_{i=1}^n \frac{\sqrt{n}}{\thres_p(x,L)}\,x_i-\sum_{i=1}^n \spv_i(p,x,L,s)\big|
\leq C_{\text{\tiny\ref{l: magic vector}}}\sqrt{n}$.
\end{itemize}
Note that a combination of (b) and (d) gives
$$
\Prob\Big\{\Big|\sum_{i=1}^n b_i \,\spv_i(p,x,L,s)+s\,\sum_{i=1}^n \spv_i(p,x,L,s)\Big|\leq \tau\Big\}
\leq \frac{C_{\text{\tiny\ref{l: magic vector}}}(1+C_{\text{\tiny\ref{l: magic vector}}})\,L\,T}{\sqrt{n}}\, \tau\;\;\;
\mbox{ for all $\tau\geq \sqrt{n}$}.
$$
Define the subset $D\subset\mathcal A$ as
\begin{align*}
D:=\Big\{&y\in \mathcal A:\;\cf\Big(\sum_{i=1}^n b_i y_i,\sqrt{n}\Big)
\geq \frac{c_{\text{\tiny\ref{l: magic vector}}}L\nu}{4N}\;\;\mbox{ and}\\
&\Prob\Big\{\Big|\sum_{i=1}^n b_i y_i+s\,\sum_{i=1}^n y_i\Big|\leq \tau\Big\}
\leq \frac{C_{\text{\tiny\ref{l: magic vector}}}(1+C_{\text{\tiny\ref{l: magic vector}}})\,L\,T}{\sqrt{n}}\,
\tau\;\;\mbox{ for all }\;\;\tau\geq \sqrt{n}
\Big\},
\end{align*}
and let $\Net_T$ be defined as
$$\Net_T:=\big\{y\in\Z^n:\;(y_{\sigma(i)})_{i=1}^n\in D\mbox{ for some $\sigma\in{\bf\Pi}$}\big\}.$$
Then, by Corollary~\ref{cor: permutations} and the above remarks,
$\spv(p,x,L,s)\in\Net_T$ for every $x\in\incomp_n(\delta,\nu)$ with
$T/2<\thres_p(x,L)\leq T$.
Set $Q:=\big\{z\in\R^n:\;\big|\sum_{i=1}^n z_i\big|\leq C_{\text{\tiny\ref{l: magic vector}}}\sqrt{n}\big\}$.
Then the last assertion, together with properties (a) and (d) above, implies
\begin{align*}
\Big\{\frac{\sqrt{n}}{\thres_p(x,L)}x:\;
x\in\incomp_n(\delta,\nu),\;\;T/2<\thres_p(x,L)\leq T\Big\}
\subset \Net_T+[-1,1]^n\cap Q.
\end{align*}
Thus, we obtain the relation
\begin{align*}
\Prob\Big\{&\mbox{$Y_n\in \incomp_n(\delta,\nu)$ and $\frac{2^{-j-1}K_{\text{\tiny\ref{l: threshold}}}}{\sqrt{n}}<
\thres_p(Y_n,L)\leq \frac{2^{-j} K_{\text{\tiny\ref{l: threshold}}}}{\sqrt{n}}$}\Big\}\\
&\leq \Prob\big\{\mbox{There exists
$y\in \Net_T+[-1,1]^n\cap Q$ such that $(B_n^1(p)+s\,1_{n-1}1_n^\top) y=0$}\big\}.
\end{align*}

Now, let us estimate the probability that $\|(B_n^1(p)+s\,1_{n-1}1_n^\top) y\|_2$ is small for a fixed $y\in\Net_T$.
By our definition of the set $\Net_T$, we have
$$\Prob\big\{\big|\langle \Row_k(B_n^1(p)+s\,1_{n-1}1_n^\top), y\rangle\big|\leq \tau\big\}
\leq \mbox{$\frac{C_{\text{\tiny\ref{l: magic vector}}}(1+C_{\text{\tiny\ref{l: magic vector}}})
\,L\,T}{\sqrt{n}}$}\, \tau\;\;\mbox{ for all }\;\;\tau\geq \sqrt{n}\;\;
\mbox{ and }\;\;k\leq n-1.$$
Hence, appying Lemma~\ref{l: tensorization}, we get
$$
\Prob\big\{\|(B_n^1(p)+s\,1_{n-1}1_n^\top) y\|_2\leq \tau\sqrt{n-1}\big\}\leq \big(C_{\text{\tiny\ref{l: tensorization}}}\,
\mbox{$\frac{C_{\text{\tiny\ref{l: magic vector}}}(1+C_{\text{\tiny\ref{l: magic vector}}})
\,L\,T}{\sqrt{n}}$}\,\tau\big)^{n-1}\mbox{ for all }\tau\geq \sqrt{n}.
$$
Observe that for any $z\in [-1,1]^n\cap Q$ we have
\begin{align*}
\|(B_n^1(p)+s\,1_{n-1}1_n^\top) z\|_2
&\leq \|z\|_2\,\|B_n^1(p)-p\,1_{n-1}1_n^\top\|
+|s+p|\,\|1_{n-1}1_n^\top z\|_2\\
&\leq \sqrt{n}\,\|B_n^1(p)-p\,1_{n-1}1_n^\top\|
+C_{\text{\tiny\ref{l: magic vector}}}n,
\end{align*}
where we have used that $s\in[-1,0]$.
Then the above relations, together with a net argument, imply
\begin{align*}
\Prob\big\{&\mbox{There exists $y\in \Net_T+[-1,1]^n\cap Q$ such that $(B_n^1(p)+s\,1_{n-1}1_n^\top) y=0$}\big\}\\
&\leq \Prob\big\{\|B_n^1(p)-p\,1_{n-1}1_n^\top\|\geq C\sqrt{n}\big\}+|\Net_T|\,\max\limits_{y\in\Net_T}
\Prob\big\{\|(B_n^1(p)+s\,1_{n-1}1_n^\top) y\|_2\leq Cn+C_{\text{\tiny\ref{l: magic vector}}}n\big\}\\
&\leq 2^{-n}+|\Net_T|\,
\big(2 (C+C_{\text{\tiny\ref{l: magic vector}}})C_{\text{\tiny\ref{l: tensorization}}}
C_{\text{\tiny\ref{l: magic vector}}}(1+C_{\text{\tiny\ref{l: magic vector}}})\,L\,T\big)^{n-1}.
\end{align*}

The last --- and the most important --- step of the proof is to bound from above the cardinality of $\Net_T$.
In view of Corollary~\ref{cor: permutations} and the definition of $D$ and $\Net_T$, we have
$$|\Net_T|\leq C_{\text{\tiny\ref{l: special permutations}}}^n|D|.$$
Further, observe that by Lemma~\ref{l: admissibility of A}, the set $\mathcal A$
is $(N,n,K_{\text{\tiny\ref{l: admissibility of A}}},\delta)$--admissible.
Hence, Corollary~\ref{cor: anticoncentration} is applicable, and the definition of $D$
gives for all $n$ large enough:
$$|D|\leq e^{-Mn}|\mathcal A|\leq e^{-Mn}(K_{\text{\tiny\ref{l: admissibility of A}}} N)^n.$$
Combining this with the above relations and recalling that $N=\big\lfloor \frac{\nu}{T}\big\rfloor-1$, we obtain
\begin{align*}
\Prob\big\{&\mbox{There exists $y\in \Net_T+[-1,1]^n\cap Q$ such that $(B_n^1(p)+s\,1_{n-1}1_n^\top) y=0$}\big\}\\
&\leq 2^{-n}+e^{-Mn}(K_{\text{\tiny\ref{l: admissibility of A}}} N)^n
C_{\text{\tiny\ref{l: special permutations}}}^n
\big(2 (C+C_{\text{\tiny\ref{l: magic vector}}})C_{\text{\tiny\ref{l: tensorization}}}
C_{\text{\tiny\ref{l: magic vector}}}(1+C_{\text{\tiny\ref{l: magic vector}}})\,L\,T\big)^{n-1}\\
&\leq 
2^{-n}+e^{-Mn}(\sqrt{n}2^n/K_{\text{\tiny\ref{l: threshold}}})
\big(2 (C+C_{\text{\tiny\ref{l: magic vector}}})C_{\text{\tiny\ref{l: tensorization}}}
C_{\text{\tiny\ref{l: magic vector}}}(1+C_{\text{\tiny\ref{l: magic vector}}})C_{\text{\tiny\ref{l: special permutations}}}\,
K_{\text{\tiny\ref{l: admissibility of A}}}L \nu\big)^{n}\\
&\leq 2^{-n}+\sqrt{n} 2^{-n}/K_{\text{\tiny\ref{l: threshold}}}
\end{align*}
for all sufficiently large $n$,
where the last relation follows from the choice of $M$.

Returning to the small ball probability for $s_{\min}(B_n(p)+s\,1_n1_n^\top)$, we get
\begin{align*}
\Prob\big\{s_{\min}(B_n(p)+s\,1_n1_n^\top)\leq t/\sqrt{n}\big\}
&\leq
\big(1+\delta^{-1}\big)\big(1-p+\varepsilon\big)^{n}\\
&\hspace{1cm}+\frac{L}{\delta}\max\bigg(\frac{(1-p+\varepsilon)^{n} K_{\text{\tiny\ref{l: threshold}}}}{\sqrt{n}},\frac{t}{\nu}\bigg)\\
&\hspace{1cm}+\frac{n}{\delta}\big(2^{-n}+\sqrt{n} 2^{-n}/K_{\text{\tiny\ref{l: threshold}}}\big)\\
&\leq \big(1-p+2\varepsilon\big)^{n}+C_{\varepsilon,p}\,t
\end{align*}
for all sufficiently large $n$.
Since $\varepsilon\in(0,p/2]$ was chosen arbitrarily, the result follows.
\end{proof}

\noindent
{\bf Acknowledgement.}
I would like to thank the Department of Mathematical and Statistical Sciences, University of Alberta,
which I visited in December 2018 and where the first draft of this work was completed.
I would also like to thank Prof.\ Terence Tao
and the anonymous Referees for valuable remarks.

\end{document}